\documentclass[11pt]{article}

\usepackage{amsmath}
\usepackage{righttag}
\usepackage{natbib}
\usepackage{setspace}
\usepackage{comment,soul}
\usepackage{amsmath, amsthm, amsfonts, amssymb, algorithm}
\usepackage{graphicx,psfrag,epsf,color}
\usepackage{enumerate}
\usepackage{url,subcaption,float}
\usepackage[normalem]{ulem} 
\usepackage{pbox}
\usepackage{etoolbox}
\usepackage{enumitem}

\newtheorem{prop}{Proposition}

\addtolength{\textwidth}{1.4in}
\addtolength{\oddsidemargin}{-0.5in}
\addtolength{\evensidemargin}{-0.5in}
\addtolength{\topmargin}{-1.0in}
\addtolength{\textheight}{1.7in}
\newlength{\defbaselineskip}
\setlength{\defbaselineskip}{\baselineskip}

\newcommand{\specialcell}[2][c]{%
  \begin{tabular}[#1]{@{}c@{}}#2\end{tabular}}
\newcommand{\sbf}{\boldsymbol}
\newcommand{\mbf}{\mathbf}

\newcommand{\vv}{\mbox{Var}}
\newcommand{\E}{\mbox{E}}
\newtheorem{thm}{Theorem}
\newtheorem{lem}{Lemma}
\theoremstyle{remark}

\AtBeginEnvironment{rem}{\small}

\begin{document}

\title{Asymptotic Analysis of Sampling Estimators for Randomized Numerical Linear Algebra Algorithms%
\footnote{A short preliminary conference version of this paper has appeared as~\citet{asymptotic_RandNLA_estimators_CONF}.}
}

\author{
Ping Ma%
\thanks{
Department of Statistics, 
The University of Georgia.
Email: \texttt{pingma@uga.edu}.
}
\and
Xinlian Zhang%
\thanks{
Department of Family Medicine and Public Health,
University of California at San Diego.
Email: \texttt{xizhang@health.ucsd.edu}.
}
\and
Xin Xing%
\thanks{
Department of Statistics, 
Harvard University.
Email: \texttt{xin\_xing@fas.harvard.edu}.
}
\and
Jingyi Ma%
\thanks{
Department of Statistics and Mathematics,
Central University of Finance and Economics.
Email: \texttt{jingyima@cufe.edu}.
}
\and
Michael W.~Mahoney%
\thanks{
International Computer Science Institute
and
Department of Statistics,
University of California at Berkeley.
Email: \texttt{mmahoney@stat.berkeley.edu}.
}
}

\date{}
\maketitle

\begin{abstract}
\noindent
The statistical analysis of Randomized Numerical Linear Algebra (RandNLA) algorithms within the past few years has mostly focused on their performance as point estimators. 
However, this is insufficient for conducting statistical inference, e.g., constructing confidence intervals and hypothesis testing, since the distribution of the estimator is lacking. 
In this article, we develop an asymptotic analysis to derive the distribution of RandNLA sampling estimators for the least-squares problem. 
In particular, we derive the asymptotic distribution of a general sampling estimator with arbitrary sampling probabilities. 
The analysis is conducted in two complementary settings, i.e., when the objective of interest is to approximate the full sample estimator or is to infer the underlying ground truth model parameters. 
For each setting, we show that the sampling estimator is asymptotically normally distributed under mild regularity conditions.
Moreover, the sampling estimator is asymptotically unbiased in both settings. 
Based on our asymptotic analysis, we use two criteria, the Asymptotic Mean Squared Error (AMSE) and the Expected Asymptotic Mean Squared Error (EAMSE), to identify optimal sampling probabilities. 
Several of these optimal sampling probability distributions are new to the literature, e.g., the root leverage sampling estimator and the predictor length sampling estimator. 
Our theoretical results clarify the role of leverage in the sampling process, and our empirical results demonstrate improvements over existing~methods.
\end{abstract}

\noindent
{\it Keywords:} 
least squares, 
randomized numerical linear algebra, 
leverage scores, 
asymptotic distribution, 
mean squared error, 
asymptotic mean squared error

\section{Introduction}
Recent work in Randomized Numerical Linear Algebra (RandNLA) focuses on using random sketches of the input data in order to construct approximate solutions more quickly than with traditional deterministic algorithms. 
In this article, we consider \emph{statistical} aspects of recently-developed fast RandNLA algorithms for the least-squares (LS) linear regression problem.
Given 
$\mbf{Y}=(Y_1, \ldots, Y_n)^{T}\in \mathbb{R}^n$ and 
$\mbf{X}=(\mbf{x}_1, \ldots, \mbf{x}_n)^{T}\in \mathbb{R}^{n\times p}$, we consider the~model
\begin{equation}
\label{linreg-matrix}
\mbf{Y}=\mbf{X}\sbf{\beta}_0+\sbf{\varepsilon},
\end{equation}
where $\sbf{\beta}_0\in\mathbb{R}^p$ is the coefficient vector, 
and  $\sbf{\varepsilon}=(\varepsilon_1, \ldots, \varepsilon_n)^{T}\in \mathbb{R}^n$, where $\varepsilon_i$s are 
i.i.d random errors with mean 0 and variance $\sigma^2<\infty$. 
We assume the sample size $n$ is large and that $\mbf{X}$ has full column rank. 
The ordinary least squares (OLS) estimator
of $\sbf{\beta}_0$ is 
\begin{equation}\label{olseqn}
\hat{\sbf{\beta}}_{OLS}=\arg\min\limits_{\sbf{\beta}}\|\mbf{Y}-\mbf{X}\sbf{\beta}\|^2=(\mbf{X}^T\mbf{X})^{-1}\mbf{X}^T\mbf{Y}   ,
\end{equation}
where $\|\cdot\|$ is the Euclidean norm. 
While the OLS estimate is optimal in several senses, the algorithmic complexity for computing 
it  
with direct methods is $O(np^2)$, which can be daunting when $n$ and/or $p$ are~large. 

Motivated by these algorithmic considerations, randomized sketching methods have been developed within RandNLA to achieve improved computational efficiency \citep{Mah-mat-revBOOK,Drineas:Mahoney:2016:RRN:ACM,HMT09_SIREV,woodruff2014sketching,MD16_chapter,RandNLA_PCMIchapter_chapter}. 
With these methods, one takes a (usually nonuniform) random sample of the full data (perhaps after preprocessing or preconditioning with a random projection matrix~\citep{Drineas:Mahoney:2016:RRN:ACM}), and then the sample is retained as a surrogate for the full data for subsequent computation. 
Here is an example of this approach for the LS problem. 

\begin{itemize}[leftmargin=*]
\item
\textbf{Step 1: Sampling.}
Draw a random sample of size $r\ll n$ with replacement from the full data using probabilities $\{\pi_i\}_{i=1}^{n}$. 
Denote the resulting sample and probabilities as $(\mbf{X}^{*}, \mbf{Y}^{*})$ and $\{\pi_i^{*}\}_{i=1}^{r}$.\\ 
\item
\textbf{Step 2: Estimation.} Calculate the  weighted LS solution, using the random sample, by solving \\
\begin{eqnarray}
\nonumber
\tilde{\sbf{\beta}}
   &=&\text{arg min}_{\sbf{\beta} }\|\mbf{\Phi}^*\mbf{Y}^{*}-\mbf{\Phi}^*\mbf{X}^{*}\sbf{\beta}\|^2 \\
\label{wlsq-sample}
   &=& (\mbf{X}^{*T}\mbf{\Phi}^{*2}\mbf{X}^{*})^{-1}\mbf{X}^{*T}\mbf{\Phi}^{*2}\mbf{Y}^{*}
\end{eqnarray}
where $\mbf{\Phi}^*=\text{diag}(1/\sqrt{r\pi_i^{*}})$. 
\end{itemize}

\noindent
Popular RandNLA sampling approaches include the uniform sampling estimator (UNIF), the basic leverage-based sampling estimator (BLEV), where $\pi_i^{BLEV}= h_{ii}/\sum_{i=1}^{n}h_{ii} $, where $h_{ii} = \mbf{x}_i^T(\mbf{X}^T\mbf{X})^{-1}\mbf{x}_i$ 
are the leverage scores of $\mbf{X}$, and the shrinkage leverage estimator (SLEV), which involves sampling probabilities $\pi_i^{SLEV}=\lambda h_{ii}/\sum_{i=1}^{n}h_{ii}+(1-\lambda)/n$, where $\lambda \in (0,1)$~\citep{Drineas:06,DMM08,DMMW12_JMLR,Ma:13}. 

In this article,  we study the \emph{statistical} properties of these and other estimators.
Substantial evidence has shown the practical effectiveness of core RandNLA methods~\citep{Ma:13,Ma:15,Drineas:Mahoney:2016:RRN:ACM} (as well as other randomized approximating methods, including the Hessian sketch~\citep{wang2017sketching,JMLR:v17:14-460} and iterative/divide-and-conquer methods~\citep{AMT10,meng2014lsrn}) in providing point estimators.  
However, this is not sufficient for statistical analysis since the uncertainty of the estimator is lacking. 
In statistics, uncertainty assessment can be conducted through confidence interval construction and significance testing. 
It is well-known that the construction of confidence intervals and significance testing are interrelated with each other \citep{lehmann2006testing}. 
Performing these two analyses is more difficult than point estimation, since it requires the distributional results of the estimator, rather than just moment conditions or concentration bounds. 
In the RandNLA literature, distribution results of estimators are still lacking.

There are two main challenges in studying the statistical and distributional properties of RandNLA algorithms. 
The first challenge is that there are two sources of randomness contributing to the statistical performance of RandNLA sampling estimators: one source is the random errors in the model, i.e., the $\varepsilon_i$s, which are typically attributed to measurement error or random noise inherited by $\mbf{Y}$; and the other source is the randomness in the random sampling procedure within the approximation algorithm.
The second challenge is that these two sources of randomness couple together within the estimator in a nontrivial way. 
More formally, the sampling estimator can be expressed as 
$\tilde{\sbf\beta}=(\mbf{X}^T\mbf{W}\mbf{X})^{-1}\mbf{X}^T\mbf{W}\mbf{Y}$, where $\mbf W$ is a random diagonal matrix, with the $i^{th}$ diagonal element being related to the probability of choosing the $i^{th}$ sample. 
The random variable used to denote the random sampling procedure, i.e., $\mbf W$, is involved in the sampling estimator in a nonlinear fashion, and it pre-multiplies $\mbf{Y}$, which contains randomness from the~$\varepsilon_i$s.

We address these challenges to studying the asymptotic distribution of a general RandNLA sampling estimator for LS problems. 
Our results are fundamentally different from previous results on the statistical properties of RandNLA algorithms (e.g.,~\cite{Ma:13,Ma:15,raskutti:2015,chen2016statistical,2016arXiv160102068W,DCMW19_TR}), in that we provide asymptotic distribution analysis, rather than finite-sample concentration inequalities. 
The resulting asymptotic distributions open the possibility of performing statistical inference tasks such as hypothesis testing and constructing confidence intervals, whereas finite sample concentration inequality results may not. 
It is worth mentioning that the results of asymptotic analysis are usually practically valid as long as the sample size is only moderately large.

\subsection{Main Results and Contributions}

We study the asymptotic distribution of a general RandNLA sampling estimator for the LS linear regression problem, from both a theoretical and empirical perspective. 

\paragraph{Main Theoretical Results.}
Our main theoretical contribution is to derive the asymptotic distribution of RandNLA estimators in two complementary settings.

{\bf Data are a random sample.} 
We first consider the data as a random sample from a population, in which case the goal is to estimate the parameters of the population model.
In this case, for this \emph{unconditional inference}, we establish the asymptotic normality, i.e., deriving the asymptotic distribution, of sampling estimators for the linear model under general regularity conditions.
We show that sampling estimators are asymptotically unbiased estimators with respect to the true model coefficients,
and we obtain an explicit form for the asymptotic variance, for both fixed number of predictors (Theorem~\ref{thm:beta_tilde-beta}) and diverging number of predictors (Theorem~\ref{coro:diverg:p}). 
{\bf Sampling Estimators.} 
Using these distributional results, we propose several efficient and asymptotically optimal estimators. 
Depending on the quantity of interest (e.g., $\sbf\beta_0$ versus some linear function of $\sbf\beta_0$ such as $\mbf{Y}=\mbf X\sbf\beta_0$ or $\mbf{X}^T\mbf X\sbf\beta_0$), we obtain different optimal sampling probabilities (Propositions~\ref{thm:beta:pi1}, \ref{thm:beta:pi3}, and~\ref{thm:beta:pi2}) that lead to sampling estimators that minimize the Asymptotic Mean Squared Error (AMSE) in the respective context. 
None of these distributions is proportional to the leverage scores, but one (RL of Proposition~\ref{thm:beta:pi3}) is constructed using the square roots of the leverage scores, and another (PL of Proposition~\ref{thm:beta:pi2}) is constructed using the row norms of the predictor~matrix.

{\bf Data are given and fixed.} 
We then consider the data as given/fixed, in which case the goal is to approximate the full sample OLS estimate. 
In this case, for this \emph{conditional inference}, we establish the asymptotic normality, i.e., deriving the asymptotic distribution, of sampling estimators for the linear model under general regularity conditions.
We show that sampling estimators are asymptotically unbiased with respect to the OLS estimate, and we obtain an explicit form of the asymptotic variance and the Expected Asymptotic Mean Squared Error (EAMSE) of sampling estimators (Theorem~\ref{thm:beta_tilde-beta_hat}).
{\bf Sampling Estimators}. 
Using these results, we construct sampling probability distributions that lead to sampling estimators that minimize the EAMSE. 
Depending on the quantity of interest (here, $\hat{\sbf\beta}_{OLS}$ versus some linear function of $\hat{\sbf\beta}_{OLS}$ such as $\hat{\mbf{Y}}=\mbf X\hat{\sbf\beta}_{OLS}$ or $\mbf{X}^T\mbf X\hat{\sbf\beta}_{OLS}$, we obtain different optimal sampling probabilities (Propositions~\ref{thm:beta_hat:pi}, \ref{thm:beta_hat:pi3}, and~\ref{thm:beta_hat:pi2}).

\paragraph{Main Empirical Results.}

We conduct a comprehensive empirical evaluation of the performance of these sampling estimators, on both synthetic and real datasets. 
This involves both conditional and unconditional inference cases, using predictor matrices generated from various distributions, including heavy-tailed and asymmetric distributions. 
For all settings under consideration, we calculate the squared bias and variance of the sampling estimators.
We demonstrate that the squared bias decreases as sample size increases, and we demonstrate that the squared biases are typically much smaller than the variances. 
These observations are consistent with our theory stating that the sampling estimators are asymptotically unbiased. 
The variance of sampling estimators also decreases as sample size increases, indicating the consistency of the sampling estimators. 
Depending on the specific objective considered, we also demonstrate that the novel estimators we derive have better performance, e.g., smaller variances, than existing ones, confirming the optimality results established in this paper. 
Another goal of the simulation study is to evaluate the necessity of our regularity conditions for the theorems. 
In the case of the predictor matrix generated from the $t$-distribution with 1 degree of freedom, the regularity conditions of our theory are technically not satisfied. 
The estimators, however, are shown to have performance similar to those in the aforementioned settings.  
Also, on two real-world data examples, we show that all the observations concerning asymptotic unbiasedness and asymptotic consistency in simulated datasets also appear.
In particular, our proposed sampling methods for conditional inference have smaller variances, compared to other leverage-based estimators, such as BLEV/ALEV~\citep{DMM06,DMMW12_JMLR} and SLEV~\citep{Ma:13,Ma:15}.

\subsection{Related Work}

There is a large body of related work in RandNLA 
\citep{Mah-mat-revBOOK,Drineas:Mahoney:2016:RRN:ACM,HMT09_SIREV,woodruff2014sketching,MD16_chapter,RandNLA_PCMIchapter_chapter}. 
However, very little of this work addresses statistical aspects of the methods.
Recently, significant progress has been made in the study of the statistical properties of RandNLA sampling estimators 
\citep{Ma:13,Ma:15,raskutti:2015,chen2016statistical,2016arXiv160102068W,DCMW19_TR}.
The work most related to ours is that of~\cite{Ma:13,Ma:15}, who employed a Taylor series expansion up to a linear term to study the MSE of RandNLA sampling estimators. 
\cite{Ma:13,Ma:15} failed to characterize the detailed convergence performance of the remainder term. 
They concluded that neither leverage-based sampling (BLEV) nor uniform sampling (UNIF) dominates the other in terms of variance; and they proposed and demonstrated the superiority of the SLEV sampling method.
To find the sampling distribution of estimators, leading to statistically-better RandNLA sampling estimators, it is important to examine the convergence properties of the remainder term. 
To accomplish this, we consider the asymptotic distribution of the sampling estimator.  
Such asymptotic analysis is common in statistics, and it can substantially simplify the derivation of complicated random variables, leading to simpler analytic expressions \citep{lecam1986asymptotic}. 

\cite{chen2016statistical} proposed optimal estimators minimizing the variance that account for the randomness of sampling and model error.
Our results and those of \cite{chen2016statistical} have similar goals, but they are different. 
First, \cite{chen2016statistical} used bias and variance, while we use AMSE and EAMSE. 
Second, we consider the asymptotic distribution of the sampling estimators, going beyond just the bias and variance of \cite{chen2016statistical}. 
Thus, our results 
could be used for downstream statistical inferences, e.g., constructing confidence intervals and hypothesis testing, while those of \cite{chen2016statistical} could~not.
Third, the exact expression of optimal sampling probabilities in \cite{chen2016statistical} depends on the unknown true parameter of the model, $\mbf\beta_0$ and $\sigma^2$ (Eqn.~(4) in \cite{chen2016statistical}), while our optimal sampling probabilities (see Section~\ref{sec:weighted-estimate}) are readily computed from the data. 
Fourth, \cite{chen2016statistical} only studied properties of sampling estimators for estimating true model parameters, while we consider both estimating the true parameter and approximating the full sample estimate. 

\cite{2016arXiv160102068W} proposed 
an approximated A-optimality criterion, which 
is based on the conditional variance of the sampling estimator given a subsample.
Since the randomness of sampling is not considered in the criterion, they obtained a simple analytic expressions of the optimal results. 
\cite{DCMW19_TR} also consider experimental design from the RandNLA perspective, and they propose a framework for experimental design where the responses are produced by an arbitrary unknown distribution. 
Their main result yields nearly tight bounds for the classical A-optimality criterion, as well as improved bounds for worst-case responses. 
In addition, they propose a  minimax-optimality criterion (which can be viewed as an extension of both A-optimal design and RandNLA sampling for worst-case regression). 
Related works on the asymptotic properties of subsampling estimators in logistic regression can be found in \cite{wang2018optimal} and \cite{wang2019more}.

\subsection{Outline}

The remainder of this article is organized as follows.
In Section~\ref{sec:AMSE:defn}, we introduce the necessary technical notation and definitions of MSE, AMSE, and EAMSE. 
In Section~\ref{sec:weighted-estimate}, we derive the asymptotic distribution of the sampling estimator, and we propose several criteria which give rise to optimal sampling probability distributions.  
In Section~\ref{sec:simu-and-real}, we present empirical results on simulated data and two real-world data examples. 
In Section~\ref{sec:conclusion}, we provide a brief discussion and conclusion.
All technical proofs are presented in the Appendix. 
A short preliminary conference version of this paper has appeared as~\citet{asymptotic_RandNLA_estimators_CONF}.

\section{MSE, AMSE and EAMSE: Technical Definition}
\label{sec:AMSE:defn}

In this section, we review the well-known Mean Squared Error (MSE) criterion, and we also define and discuss the standard but less well-known Asymptotic Mean Squared Error (AMSE) and Expected Asymptotic Mean Squared Error (EAMSE) criteria.

Let $\sbf T_n$ be a $p\times 1$ estimator of a $p\times 1$ parameter $\sbf\nu$, for every $n$. 
One popular quality metric for the estimator $\sbf T_n$ is the MSE, which is defined to be
\begin{eqnarray}
MSE(\sbf T_n; \sbf \nu)
   &=& \nonumber \text{E}[ (\sbf T_n-\sbf\nu)^T(\sbf T_n-\sbf\nu) ]\\
   &=& \nonumber \text{tr}(\text{Var}(\sbf T_n))+(\text{E}(\sbf T_n)-\sbf\nu)^T(\text{E}(\sbf T_n)-\sbf\nu).  
\end{eqnarray}
The MSE can be decomposed into two terms: 
one term, $\text{tr}(\text{Var}(\sbf T_n))$, quantifying the \emph{variance} of the estimator; and 
one term, $(\text{E}(\sbf T_n)-\sbf\nu)^T(\text{E}(\sbf T_n)-\sbf\nu)$, quantifying the \emph{squared bias} of the estimator. 
To evaluate the RandNLA sampling estimator $\tilde{\sbf\beta}$ in estimating the true model parameter ${\sbf\beta}_{0}$ and the full sample OLS estimate $\hat{\sbf\beta}_{OLS}$, we will be interested in the AMSE and EAMSE, respectively. 
These are the asymptotic counterparts of MSE in large sample theory.

To define the AMSE, let $\sbf T_n$ be a $p\times 1$ estimator of a $p\times 1$ parameter $\sbf\nu$, for every $n$, and let ${\sbf\Sigma_n}$ be a sequence of $p\times p$ positive definite matrices. 
Assume $\sbf\Sigma_n^{-1/2}(\sbf T_n-\sbf \nu)\stackrel{d}{\rightarrow}\sbf Z$, where $\stackrel{d}{\rightarrow}$ denotes convergence in distribution, and assume $\sbf Z$ is a $p\times 1$ random vector such that its $i^{th}$ element $Z_i$ satisfies $0<\text{E} ( Z_i^2)<\infty$, for $i=1,\ldots, p$. 
Then, the AMSE of $\sbf T_n$, denoted $AMSE(\sbf T_n; \sbf \nu)$, is defined to be
\begin{eqnarray}
AMSE(\sbf T_n; \sbf \nu)
   &=& \nonumber \text{E}(\sbf Z^T\sbf\Sigma_n\sbf Z)  \\
   &=& \nonumber \text{tr}(\sbf\Sigma_n^{1/2}\text{Var}(\sbf Z)\sbf\Sigma_n^{1/2})+(\text{E}(\sbf Z)^T\sbf\Sigma_n\text{E}(\sbf Z))  \\
   &=&  \label{eqn:mse20} \text{tr}(\text{AVar}(\sbf T_n))+(\text{AE}(\sbf T_n)-\sbf\nu)^T(\text{AE}(\sbf T_n)-\sbf\nu), 
\end{eqnarray}
where $\text{AVar}(\sbf T_n)=\sbf\Sigma_n^{1/2}\text{Var}(\sbf Z)\sbf\Sigma_n^{1/2}$ and $\text{AE}(\sbf T_n)=\sbf\nu+\sbf\Sigma_n^{1/2}\text{E}(\sbf Z)$ denote the \emph{asymptotic variance-covariance matrix} and the \emph{asymptotic expectation} of $\sbf T_n$ in estimating $\sbf \nu$, respectively.  

To define the EAMSE, let $\sbf T_r$ be an $p\times 1$ estimator of a $p\times 1$ parameter $\sbf\nu_{\mbf Y}$, for every sample size $r$, and let $\sbf\Sigma_r$ be a sequence of $p\times p$ positive definite matrices. 
Assume that $\sbf\Sigma_r^{-1}(\sbf T_r-\sbf \nu_{\sbf Y})\stackrel{d}{\rightarrow}\sbf Z_{\sbf Y} $, and that $\sbf Z_{\sbf Y} $ is a $p\times 1$ random vector such that its $i^{th}$ element ${Z_{\mbf Y}}_i$ satisfies $0<\text{E} ( { Z_{\mbf Y}}_i^2)<\infty$, for $i=1,\ldots, p$. 
The EAMSE of $\sbf T_r$, denoted $EAMSE(\sbf T_r; \sbf\nu_{\mbf Y})$, is defined to be
\begin{eqnarray}
EAMSE(\sbf T_r; \sbf \nu_{\mbf Y})
   &=& \nonumber \text{E}_{\mbf Y} (\text{E}(\sbf Z_{\mbf Y} ^T\sbf\Sigma_r\sbf Z_{\mbf Y} ))  \\
  &=& \nonumber \text{E}_{\mbf Y}(\text{tr}(\sbf\Sigma_r^{1/2}\text{Var}(\sbf Z_{\mbf Y})\sbf\Sigma_r^{1/2})+\text{E}_{\mbf Y}(\text{E}(\sbf Z_{\mbf Y} )^T\sbf\Sigma_r\text{E}(\sbf Z_{\mbf Y} )))  \\
  &=&\label{eqn:eamse}\text{E}_{\sbf Y}(\text{tr}(\text{AVar}(\sbf T_r)))+\text{E}_{\sbf Y}(\text{AE}(\sbf T_r-\sbf \nu_Y)^T\text{AE}(\sbf T_r-\sbf \nu_Y)),
 \end{eqnarray}  
where $\text{AVar}(\sbf T_r)=\sbf\Sigma_r^{1/2}\text{Var}(\sbf Z_{\sbf Y})\sbf\Sigma_r^{1/2}$ and $\text{AE}(\sbf T_r)=\sbf\nu+\sbf\Sigma_r^{1/2}\text{E}(\sbf Z_{\sbf Y})$ denote the \emph{asymptotic variance-covariance matrix} and the \emph{asymptotic expectation} of $\sbf T_r$ in estimating $\sbf \nu_{\sbf Y}$, respectively. 

If $\text{E}(\sbf Z)=\sbf 0$, or $\text{E}(\sbf Z_{\sbf Y})=\sbf 0$, then we say $\sbf T_n$, or $\sbf T_r$, is an \emph{asymptotically unbiased estimator} of $\sbf\nu$ or $\sbf\nu_{\sbf Y}$, respectively. 
If $\text{tr}(\text{AVar}(\sbf T_n))\rightarrow0$ as $r\rightarrow\infty$, or $\text{E}_{\sbf Y}(\text{tr}(\text{AVar}(\sbf T_r)))\rightarrow0$ as $r\rightarrow\infty$, then we say $\sbf T_n$ or $\sbf T_r$ is an \emph{asymptotically consistent estimator}, respectively.

We may think of the EAMSE as the expectation of the AMSE. 
An important subtlety, however, in the use of the AMSE versus the use of the EAMSE lies in the limiting distribution. 
In unconditional inference (i.e., where we consider a statistical model, and where we will use the AMSE), the limiting distribution is $\sbf Z$, i.e., it does not involve the data $\mbf Y$; whereas, in conditional inference (i.e., where we consider the dataset $\mbf Y$ and sample size $n$ as fixed and given, and where we will use the EAMSE), the limiting distribution is $\sbf Z_{\mbf Y}$, i.e., it involves the data $\mbf Y$. 
In this paper, the basic estimator is denoted as $\tilde{\sbf\beta}$ (i.e., the counterpart for $\sbf T_n$ or $\sbf T_r$ in the definitions above will be $\tilde{\sbf\beta}$, or a linear function of $\tilde{\sbf\beta}$).
We will obtain the explicit form for both $\text{AVar}(\tilde{\sbf\beta})$ and $\text{AE}(\tilde{\sbf\beta})$ by deriving the large sample distributions of sampling estimators, when performing unconditional inference and conditional inference, in Section~\ref{subsec:est:beta} and Section~\ref{subsec:approx:betaols}, respectively. 
As we show in Section~\ref{subsec:approx:betaols}, the sequences of $\sbf \Sigma_r$ involve statistics based on the full sample. 
Thus, the motivation for taking the expectation of AMSE to construct EAMSE is to avoid calculating those full sample statistics in proposing the optimal RandNLA sampling estimators in conditional~inference.

\section{Sampling Estimation Methods}
\label{sec:weighted-estimate}

In this section, we derive asymptotic properties of the RandNLA sampling estimator $\tilde{\sbf{\beta}}$ under two scenarios: unconditional inference, which involves estimating the true model parameter $\sbf{\beta}_0$; and conditional inference, which involves approximating the full sample OLS estimator $\hat{\sbf{\beta}}_{OLS}$. 
We use the AMSE and EAMSE to develop two criteria for sampling estimators, and we obtain several optimal estimators.

\subsection{Unconditional Inference: Estimating Model Parameters}
\label{subsec:est:beta}

For Model (\ref{linreg-matrix}), from the traditional statistical perspective of using the data to perform inference, one major goal is to estimate the underlying true model parameters, i.e., $\sbf\beta_0$. 
We refer to this as \emph{unconditional inference}. 
For unconditional inference, both randomness in the data and randomness in the algorithm contribute to randomness in the RandNLA sampling estimators.

The following theorem states that, in unconditional inference, the asymptotic distribution of the sampling estimator $\tilde{\sbf\beta}$  is a normal distribution (with mean $\sbf\beta_0$ and variance $\sigma^2\mbf{\Sigma}_0$). 
The proof of Theorem~\ref{thm:beta_tilde-beta} is provided in Appendix~\ref{sec:proof:thm:beta_tilde-beta}. 

\begin{thm}[\textbf{Unconditional inference, fixed $p$}]
\label{thm:beta_tilde-beta}
Assume the number of predictors $p$ is fixed and the following regularity conditions hold.
\begin{itemize}
   \item
   \textit{(A1)[Data condition].} 
   There exist positive constants $b$ and $B$ such that 
   $ b\le \lambda_{min}\le\lambda_{max} \le B, $  
   where $\lambda_{min}$ and  $\lambda_{max}$  are the minimum and maximum eigenvalues of matrix $\mbf X^T\mbf X/n$, respectively. 
   \item
   \textit{(A2)[Sampling condition].}  The sample size $r=O(n^{1-\alpha})$, where $0\le\alpha<1$ and where the minimum sampling probability $\pi_{min} = O(n^{-\gamma_0})$, where $\gamma_0\ge1$.
   The parameters $\gamma_0$ and $\alpha$ satisfy $\gamma_0+\alpha<2$. 
\end{itemize} 
Under these assumptions, as the sample size $n\to\infty$, we have
\begin{eqnarray}
   (\sigma^2\mbf{\Sigma}_0)^{-\frac 12}(\sbf{\tilde\beta}-\sbf\beta_0)
   &\stackrel{d}{\rightarrow}&
   \label{eqn:thm:beta_tilde-beta}
   \textbf{N}(\sbf 0,\mbf{I}_p)
\end{eqnarray}
where 
$$
\mbf{\Sigma}_0 = 
   (\mbf X^T\mbf X)^{-1}
   \left( \mbf{X}^T(\mbf I_n+\mbf \Omega)\mbf{X} \right)
   (\mbf X^T\mbf X)^{-1}, \quad \mbf{\Omega} = \text{diag}\{1/r\pi_i\}_{i=1}^n,
$$ 
and $\mbf{I}_p$ is the $p\times p$ identity. 
Thus, for unconditional inference, the asymptotic mean of $\sbf{\tilde\beta}$~is
\begin{equation}
\label{eqn:thm:uncond:ae}
\text{AE}(\sbf{\tilde\beta})=\sbf\beta_0, 
\end{equation}
i.e., $\sbf{\tilde\beta}$ is an asymptotically unbiased estimator of $\sbf\beta_0$, and the asymptotic variance of $\sbf{\tilde\beta}$ is 
\begin{equation}\label{eqn:thm:uncond:avar}
AVar(\sbf{\tilde\beta})= \sigma^2\mbf{\Sigma}_0. 
\end{equation}
\end{thm}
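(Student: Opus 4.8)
The plan is to start from an exact algebraic identity for the estimation error and then disentangle the two sources of randomness. Writing the sampling estimator in the equivalent full-data form $\tilde{\sbf\beta}=(\mbf X^T\mbf W\mbf X)^{-1}\mbf X^T\mbf W\mbf Y$, where $\mbf W=\mathrm{diag}(w_1,\dots,w_n)$ is the random diagonal weight matrix induced by drawing $r$ indices with replacement (so $w_i$ equals $1/(r\pi_i)$ times the number of times $i$ is selected, and hence $\E[\mbf W]=\mbf I_n$), I would substitute the model $\mbf Y=\mbf X\sbf\beta_0+\sbf\varepsilon$. On the event that $\mbf X^T\mbf W\mbf X$ is invertible (which holds with probability tending to one once $r\ge p$), the identity $(\mbf X^T\mbf W\mbf X)^{-1}\mbf X^T\mbf W\mbf X\sbf\beta_0=\sbf\beta_0$ collapses the expression to the clean form
\begin{equation*}
\tilde{\sbf\beta}-\sbf\beta_0=(\mbf X^T\mbf W\mbf X)^{-1}\mbf X^T\mbf W\sbf\varepsilon,
\end{equation*}
so the whole analysis reduces to the limiting behaviour of the right-hand side, in which $\mbf W$ (sampling) and $\sbf\varepsilon$ (model noise) couple multiplicatively.

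Next I would linearize the inverse Hessian. The aim is to show, using (A1) and (A2), that $\mbf W$ concentrates around its mean $\mbf I_n$ strongly enough that $(\mbf X^T\mbf X)^{-1}\mbf X^T(\mbf W-\mbf I_n)\mbf X\to\mbf 0$ in probability, so that $(\mbf X^T\mbf W\mbf X)^{-1}=(\mbf X^T\mbf X)^{-1}(\mbf I_p+o_p(1))$. Concretely, the marginal counts give $\vv(w_i)=(1-\pi_i)/(r\pi_i)\le 1/(r\pi_{min})$, and the scaling in (A2) yields $1/(r\pi_{min})=O(n^{\gamma_0+\alpha-1})$. A second-moment bound on the mean-zero matrix sum $\mbf X^T(\mbf W-\mbf I_n)\mbf X=\sum_i(w_i-1)\mbf x_i\mbf x_i^T$ then shows that its operator norm is of smaller order than $\|\mbf X^T\mbf X\|\asymp n$ precisely when $\gamma_0+\alpha<2$; this is where that condition binds. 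This step lets me replace $\tilde{\sbf\beta}-\sbf\beta_0$ by the linear term $(\mbf X^T\mbf X)^{-1}\mbf X^T\mbf W\sbf\varepsilon$ up to a factor that is asymptotically negligible relative to the target standard deviation $(\sigma^2\mbf\Sigma_0)^{1/2}$.

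It then remains to establish a central limit theorem for $\mbf X^T\mbf W\sbf\varepsilon=\sum_i w_i\mbf x_i\varepsilon_i$ and to identify its covariance. I would compute the covariance by the law of total variance, conditioning on $\sbf\varepsilon$: the between-term $\vv(\E[\mbf X^T\mbf W\sbf\varepsilon\mid\sbf\varepsilon])=\vv(\mbf X^T\sbf\varepsilon)=\sigma^2\mbf X^T\mbf X$ produces the identity part, while the within-term $\E[\vv(\mbf X^T\mbf W\sbf\varepsilon\mid\sbf\varepsilon)]=\sigma^2(\mbf X^T\sbf\Omega\mbf X-\tfrac1r\mbf X^T\mbf X)$ produces the $\sbf\Omega$ part, giving $\vv(\mbf X^T\mbf W\sbf\varepsilon)=\sigma^2\mbf X^T(\mbf I_n+\sbf\Omega)\mbf X$ up to a lower-order $\sigma^2\mbf X^T\mbf X/r$ term. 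Sandwiching with $(\mbf X^T\mbf X)^{-1}$ reproduces exactly $\sigma^2\mbf\Sigma_0$. For the normality itself I would use the with-replacement representation $\mbf X^T\mbf W\sbf\varepsilon=\tfrac1r\sum_{j=1}^r\pi_{s_j}^{-1}\mbf x_{s_j}\varepsilon_{s_j}$ together with the Cram\'er--Wold device, reducing to a scalar triangular-array sum, and verify a Lindeberg condition from the finite error variance $\sigma^2$, condition (A1), and the sampling scaling in (A2).

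The main obstacle is this joint central limit theorem, because the two randomness sources are entangled: the summands $\pi_{s_j}^{-1}\mbf x_{s_j}\varepsilon_{s_j}$ are only conditionally i.i.d.\ given $\sbf\varepsilon$, the noise values repeat whenever the same index is drawn twice, and the sampling fluctuation and the model noise live on different scales ($r$ versus $n$ terms). Handling this cleanly requires either a conditional CLT for the sampling fluctuation combined with a separate CLT for $\mbf X^T\sbf\varepsilon$ and an argument that the two limits combine additively, or a direct triangular-array CLT for the coupled sum; in either route the delicate point is verifying Lindeberg negligibility of the maximal summand, which again rests on $\gamma_0+\alpha<2$. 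Once normality with covariance $\sigma^2\mbf\Sigma_0$ is in hand, the AMSE definition immediately yields $\mathrm{AE}(\tilde{\sbf\beta})=\sbf\beta_0$ (the limiting mean being $\sbf 0$) and $\mathrm{AVar}(\tilde{\sbf\beta})=\sigma^2\mbf\Sigma_0$.
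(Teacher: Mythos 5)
Your reduction steps are sound, and in one respect cleaner than the paper's own argument: the exact identity $\tilde{\sbf\beta}-\sbf\beta_0=(\mbf X^T\mbf W\mbf X)^{-1}\mbf X^T\mbf W\sbf\varepsilon$ avoids the paper's detour through $\hat{\sbf\beta}_{OLS}$ and the residual vector $\sbf e$ (the paper's Lemma~\ref{lem:1} expands around $\hat{\sbf\beta}_{OLS}$ and must then separately show that $(\mbf{X}^T\mbf{X})^{-1}\mbf{X}^T\mbf{W}(\sbf e-\sbf\varepsilon)$ and $\hat{\sbf\beta}_{OLS}-\sbf\beta_0$ are negligible). Your linearization of the inverse via a second-moment bound on $\mbf{X}^T(\mbf{W}-\mbf{I})\mbf{X}$ under $\gamma_0+\alpha<2$ is exactly the paper's verification that $\delta>0$, and your total-variance computation correctly identifies $\sigma^2\mbf{X}^T(\mbf I_n+\mbf\Omega)\mbf{X}$ up to the lower-order $-\sigma^2\mbf{X}^T\mbf{X}/r$ correction.

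The gap is the CLT itself, which is the heart of the theorem and which you explicitly leave unresolved (``handling this cleanly requires either \dots or \dots''). Neither suggested route is carried out, and the first one --- a conditional CLT given $\sbf\varepsilon$, combined additively with a CLT for $\mbf{X}^T\sbf\varepsilon$ --- runs into a real obstruction: given $\sbf\varepsilon$, the conditional variance of the sampling fluctuation involves $\sum_{i}\varepsilon_i^2\frac{1-\pi_i}{r\pi_i}\mbf x_i\mbf x_i^T$, and for the conditional limit to be a \emph{fixed} normal you would need this random matrix to concentrate at its mean. With only $\E(\varepsilon_i^2)=\sigma^2<\infty$ assumed (so $\varepsilon_i^2$ has just one finite moment) and weights $1/(r\pi_i)$ that condition (A2) allows to be severely unbalanced, such a weighted law of large numbers need not hold without additional moment assumptions. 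The paper's resolution is the idea your proposal is missing: Poissonization. It replaces the multinomial counts $\mbf K$ by independent $U_i\sim\text{Poisson}(r\pi_i)$ (Lemma~\ref{lem:mult:pois}: the multinomial law is the joint Poisson law conditioned on $\sum_{i=1}^nU_i=r$), proves an unconditional Hajek--Sidak CLT for the now fully independent summands $U_i\varepsilon_i$ (Lemma~\ref{lem:Us}, where the Noether condition is precisely where (A1) and the consequence $nr\pi_{min}\rightarrow\infty$ of (A2) enter), and then transfers the limit back to the multinomial statistic via a conditional CLT: the linear statistic is uncorrelated with, and asymptotically jointly normal and independent of, $\sum_iU_i$, and convergence of the conditional law is justified through uniform equicontinuity of the conditional characteristic functions, following \citet{cltformult1975} and \citet{steck1957condlimit} (Lemma~\ref{lem:cond:Us}). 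Without this step, or an executed substitute such as a stable-convergence argument under strengthened moment conditions, your proposal establishes the correct first-order approximation and the correct covariance $\sigma^2\mbf\Sigma_0$, but not the asymptotic normality that the theorem actually asserts.
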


\textbf{Remark.}
Theorem~\ref{thm:beta_tilde-beta} considers the case of a fixed parameter dimension $p$. 
The case of diverging parameter dimension $p\rightarrow\infty$ is considered in Theorem~\ref{coro:diverg:p} below.

\textbf{Remark.}
Theorem~\ref{thm:beta_tilde-beta} shows that, as the number of data points $n$ gets larger, the distribution of $\tilde{\sbf\beta}$ is well-approximated by a normal distribution, with mean $\sbf\beta_{0}$ and variance $\sigma^2\mbf{\Sigma}_0$. 

\textbf{Remark.}
Condition 
(A1)
in Theorem~\ref{thm:beta_tilde-beta} indicates that $\mbf X^T\mbf X/n$ is positive definite (as opposed to being just positive semi-definite). 
This condition requires the predictor matrix $\mbf X$ to be of full column rank and that the elements in $\mbf X$ are not over-dispersed. 
This condition ensures the consistency of the full sample OLS estimator~\citep{lai1978PNAS}, and it has been used in many related problems, e.g., variable selection~\citep{zou:adaptive:2006}. 

\textbf{Remark.}
Condition 
(A2) in Theorem~\ref{thm:beta_tilde-beta}, which can be rewritten as $n^{-\gamma_0} > n^{-(2-\alpha)}$, provides a lower bound on the \emph{smallest} sampling probability.
The smallest sampling probability cannot be too small, in the sense that it should be $O(n^{\alpha})$ away from $O(n^{-2})$. 
Bounding sampling probabilities from below mitigates the inflation of the variance $\mbf{\Sigma}_0$, which is proportional to the reciprocal sampling probability.
The importance of this condition for establishing \emph{statistical} properties of RandNLA algorithms was highlighted by~\citet{Ma:13,Ma:15}. 
Condition 
(A2)
can also be rewritten as $n^{1-\alpha}  n^{-\gamma_0} > n^{-1}$, which states that when the smallest sampling probability is very small, one compensates by making the sample size large.

\textbf{Remark.} 
In Theorem~\ref{thm:beta_tilde-beta}, the asymptotic variance $AVar(\sbf{\tilde\beta})$ can be written as 
\begin{equation}
\label{eqn:avar_thm1}
AVar(\sbf{\tilde\beta})=\sigma^2(\mbf{X}^T\mbf{X})^{-1}+ \sigma^2(\mbf{X}^T\mbf{X})^{-1}\mbf{X}^T\mbf{\Omega}\mbf{X}(\mbf{X}^T\mbf{X})^{-1},
\end{equation}
where the first term is the variance of the full sample OLS, and the second term is the variation related to the sampling process. 
The second term of Eqn.~(\ref{eqn:avar_thm1}) has a ``sandwich-type'' expression. 
The center term, $\mbf{\Omega}$, depends on the reciprocal sampling probabilities, suggesting that extremely small probabilities will result in large asymptotic variance and large AMSE of the corresponding estimator. 
This was observed previously in the non-asymptotic case by~\citet{Ma:15}.

{\bf Remark.} 
In light of efficient estimation methods such as iterative Hessian sketch and dual random projection, we emphasize that besides estimation, our distribution results can be used for performing additional inference analysis, e.g., constructing a confidence interval and conducting hypothesis testing. 
These inference analyses cannot be achieved by other iterative methods as far as we know.

Given Theorem~\ref{thm:beta_tilde-beta}, it is natural to ask whether there is an optimal estimator, i.e., one with the smallest AMSE for estimating $\sbf\beta_0$. 
Using the asymptotic results in Theorem~\ref{thm:beta_tilde-beta}, we propose the following three estimators. 

\paragraph{Estimating $\sbf\beta_0$.}

By Theorem~\ref{thm:beta_tilde-beta}, we could express the $AMSE(\tilde{\sbf\beta}, {\sbf\beta}_{0})$ as a function of $\{\pi_i\}_{i=1}^n$, as shown, e.g., in Eqn.~(\ref{eqn:mse2}) below. 
Since this expression is a function of the sampling probabilities, it is straightforward to employ the method of Lagrange multipliers to find the minimizer of the right-hand side of Eqn.~(\ref{eqn:mse2}), subject to the constraint $\sum_{i=1}^n\pi_i=1$.
The minimizer is then the optimal sampling probabilities for estimating ${\sbf\beta}_{0}$. 
The proof of Proposition~\ref{thm:beta:pi1} is provided in Appendix~\ref{sxn:proof_of_prop1}.

\begin{prop}
\label{thm:beta:pi1}
For the $AMSE(\tilde{\sbf\beta}, {\sbf\beta}_{0}) $, we have that 
\begin{eqnarray}\label{eqn:mse2}
AMSE(\tilde{\sbf\beta}, {\sbf\beta}_{0}) 
&= & \sigma^2\text{tr}\{(\mbf{X}^T\mbf{X})^{-1}\}+ \frac{1}{r}\sum_{i=1}^n \frac{\sigma^2}{\pi_i}||(\mbf{X}^T\mbf{X})^{-1}\mbf{x}_i||^2.
\end{eqnarray}
Given \eqref{eqn:mse2}, the sampling estimator with the sampling probabilities 
\begin{eqnarray}\label{eqn:pi_opt}
   \pi_i= \frac{\|(\mbf{X}^T\mbf{X})^{-1}\mbf{x}_i\|}{\sum_{i=1}^{n}\|(\mbf{X}^T\mbf{X})^{-1}\mbf{x}_i\|},
   \quad 
   i=1,\ldots,n, 
\end{eqnarray}
(which we call the inverse-covariance (IC) sampling estimator)
has the smallest 
$AMSE(\tilde{\sbf\beta}; {\sbf\beta}_{0})$.
\end{prop}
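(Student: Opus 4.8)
The plan is to split the proof into two independent pieces: first establishing the closed form~\eqref{eqn:mse2} for the AMSE directly from Theorem~\ref{thm:beta_tilde-beta}, and then solving the resulting constrained optimization by Lagrange multipliers.

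For the first piece, I would invoke Theorem~\ref{thm:beta_tilde-beta}, which gives $(\sigma^2\mbf\Sigma_0)^{-1/2}(\tilde{\sbf\beta}-\sbf\beta_0)\stackrel{d}{\rightarrow}\mbf Z$ with $\mbf Z\sim\textbf{N}(\sbf 0,\mbf I_p)$. Matching this to the notation of the AMSE definition~\eqref{eqn:mse20}, I take $\sbf\Sigma_n=\sigma^2\mbf\Sigma_0$, so that $\text{E}(\mbf Z)=\sbf 0$ and $\text{Var}(\mbf Z)=\mbf I_p$. Hence the squared-bias term vanishes (the estimator is asymptotically unbiased) and $AMSE(\tilde{\sbf\beta};\sbf\beta_0)=\text{tr}(\sbf\Sigma_n)=\sigma^2\,\text{tr}(\mbf\Sigma_0)$. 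Using the sandwich form~\eqref{eqn:avar_thm1}, the first summand contributes $\sigma^2\text{tr}\{(\mbf X^T\mbf X)^{-1}\}$. For the second summand I would write $\mbf X^T\mbf\Omega\mbf X=\sum_{i=1}^n (r\pi_i)^{-1}\mbf x_i\mbf x_i^T$, pull the sum out of the trace by linearity, and use the cyclic property together with symmetry of $(\mbf X^T\mbf X)^{-1}$ to get $\text{tr}\{(\mbf X^T\mbf X)^{-1}\mbf x_i\mbf x_i^T(\mbf X^T\mbf X)^{-1}\}=\|(\mbf X^T\mbf X)^{-1}\mbf x_i\|^2$, yielding $\frac{1}{r}\sum_i \frac{\sigma^2}{\pi_i}\|(\mbf X^T\mbf X)^{-1}\mbf x_i\|^2$. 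Adding the two pieces gives~\eqref{eqn:mse2}.

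For the second piece, note that only the second term of~\eqref{eqn:mse2} depends on $\{\pi_i\}$. Writing $a_i=\|(\mbf X^T\mbf X)^{-1}\mbf x_i\|$, the problem reduces to minimizing $\sum_{i=1}^n a_i^2/\pi_i$ over the simplex $\sum_i\pi_i=1$, $\pi_i>0$. I would form the Lagrangian $\mathcal L=\sum_j a_j^2/\pi_j+\mu(\sum_j\pi_j-1)$; setting $\partial\mathcal L/\partial\pi_i=0$ gives $-a_i^2/\pi_i^2+\mu=0$, hence $\pi_i\propto a_i$, and the constraint fixes the normalizing constant to produce~\eqref{eqn:pi_opt}. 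Equivalently, Cauchy--Schwarz gives $(\sum_i a_i)^2=(\sum_i (a_i/\sqrt{\pi_i})\sqrt{\pi_i})^2\le(\sum_i a_i^2/\pi_i)(\sum_i\pi_i)=\sum_i a_i^2/\pi_i$, with equality exactly when $\pi_i\propto a_i$; this route has the advantage of simultaneously certifying that the stationary point is the global minimum, since the objective is convex on the positive orthant and the constraint is affine.

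I do not anticipate a genuine obstacle here, as both pieces are routine once Theorem~\ref{thm:beta_tilde-beta} is in hand. The only points requiring a little care are the bookkeeping that reduces the sandwich trace to the squared norms $\|(\mbf X^T\mbf X)^{-1}\mbf x_i\|^2$, and the (minor) observation that convexity upgrades the Lagrange stationary point to a global optimum over the simplex rather than merely a critical point.
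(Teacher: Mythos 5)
Your proposal is correct and follows essentially the same route as the paper's own proof: invoke Theorem~\ref{thm:beta_tilde-beta} to conclude the asymptotic bias vanishes so that $AMSE(\tilde{\sbf\beta};\sbf\beta_0)=\text{tr}(\text{AVar}(\tilde{\sbf\beta}))$, and then minimize over $\{\pi_i\}$ subject to $\sum_i\pi_i=1$ via Lagrange multipliers. Your additional Cauchy--Schwarz argument is a nice strengthening the paper leaves implicit, since it certifies that the stationary point $\pi_i\propto\|(\mbf{X}^T\mbf{X})^{-1}\mbf{x}_i\|$ is indeed the global minimum over the simplex.
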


\textbf{Remark.} 
The implication of this optimal estimator is two-fold. 
On the one hand, as defined, the proposed IC estimator has the smallest AMSE.
On the other hand, if given the same tolerance of uncertainty, i.e., to achieve a certain small standard error, the IC estimator requires the smallest sample size. 

\textbf{Remark.}
Obviously, the IC sampling probabilities can be computed in $O(np^2)$ time, using standard methods.
More importantly, using the main Algorithm 1 in~\citet{DMMW12_JMLR}, they can be computed in $O(np\log(n)/\epsilon)$ time, where $\epsilon$ is the desired approximation error parameter.

\paragraph{Estimating linear functions of $\sbf\beta_0$.}

In addition to making inference on ${\sbf\beta}_{0}$, one may also be interested in linear functions of ${\sbf\beta}_{0}$, i.e., $\mbf L\sbf \beta_0$, where $\mbf L$ is any constant matrix of suitable dimension. 
Here, we present results for $\mbf X\sbf\beta_0$ and $\mbf{X}^T\mbf X\sbf\beta_0$ (although clearly similar results hold for other functions of the form $\mbf L\sbf \beta_0$). 

We start with estimating $\mbf{Y} = \mbf X\sbf\beta_0$ since, in regression analysis, inference on the true regression line $\mbf X\sbf\beta_0$ is crucially important. 
The proof of Proposition~\ref{thm:beta:pi3} (and other similar propositions below) is similar to that of Proposition~\ref{thm:beta:pi1}, and thus it is omitted.

\begin{prop}
\label{thm:beta:pi3}
For the $AMSE(\mbf X\tilde{\sbf\beta}, \mbf X{\sbf\beta}_{0})$, we have that
\begin{eqnarray}
\label{eqn:mse3}
AMSE(\mbf X\tilde{\sbf\beta}, \mbf X{\sbf\beta}_{0}) 
&= & p\sigma^2 + \frac{1}{r}\sum_{i=1}^n \frac{\sigma^2}{\pi_i}||\mbf X(\mbf{X}^T\mbf{X})^{-1}\mbf{x}_i||^2.
\end{eqnarray}
Given \eqref{eqn:mse3}, the sampling estimator with the sampling probabilities 
\begin{eqnarray}
   \pi_i&=&\label{eqn:pi_sq_hii} \frac{\|\mbf X(\mbf{X}^T\mbf{X})^{-1}\mbf{x}_i\|}{\sum_{i=1}^{n}\|\mbf X(\mbf{X}^T\mbf{X})^{-1}\mbf{x}_i\|}
   =\frac{\sqrt{h_{ii}}}{\sum_{i=1}^n \sqrt{h_{ii}}}, 
   \quad 
   i=1,\ldots, n, 
\end{eqnarray}
(which we call the root leverage (RL) sampling estimator)
has the smallest 
$AMSE(\mbf{X}\tilde{\sbf\beta}; \mbf{X}{\sbf\beta}_{0})$. 
\end{prop}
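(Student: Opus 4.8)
The plan is to reduce everything to Theorem~\ref{thm:beta_tilde-beta} by exploiting that $\mbf X\tilde{\sbf\beta}$ is a fixed linear image of $\tilde{\sbf\beta}$, and then to optimize a simple separable objective. First I would invoke Theorem~\ref{thm:beta_tilde-beta}, which gives that $\tilde{\sbf\beta}$ is asymptotically unbiased for $\sbf\beta_0$ with asymptotic variance $\sigma^2\mbf\Sigma_0$. Since a linear image of an asymptotically normal estimator is again asymptotically normal with the transformed mean and covariance, $\mbf X\tilde{\sbf\beta}$ is asymptotically unbiased for $\mbf X\sbf\beta_0$, so the squared-bias term of the AMSE vanishes and $\text{AVar}(\mbf X\tilde{\sbf\beta}) = \sigma^2\mbf X\mbf\Sigma_0\mbf X^T$. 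Hence $AMSE(\mbf X\tilde{\sbf\beta}, \mbf X\sbf\beta_0) = \text{tr}(\sigma^2\mbf X\mbf\Sigma_0\mbf X^T)$.

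Second, I would evaluate this trace using the ``sandwich'' form~\eqref{eqn:avar_thm1}. Writing $\mbf H = \mbf X(\mbf X^T\mbf X)^{-1}\mbf X^T$ for the hat matrix and substituting, the trace splits into $\sigma^2\text{tr}(\mbf H) + \sigma^2\text{tr}(\mbf H\mbf\Omega\mbf H)$. The first piece is $\sigma^2\text{tr}(\mbf H) = p\sigma^2$, since $\mbf H$ is the orthogonal projector onto the $p$-dimensional column space of $\mbf X$. For the second piece I would use idempotency $\mbf H^2 = \mbf H$ together with the cyclic property of the trace to collapse $\text{tr}(\mbf H\mbf\Omega\mbf H) = \text{tr}(\mbf\Omega\mbf H) = \sum_i \Omega_{ii} h_{ii} = \frac{1}{r}\sum_i h_{ii}/\pi_i$, and then note $h_{ii} = \mbf x_i^T(\mbf X^T\mbf X)^{-1}\mbf x_i = \|\mbf X(\mbf X^T\mbf X)^{-1}\mbf x_i\|^2$ to recover exactly~\eqref{eqn:mse3}.

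Third, since the constant $p\sigma^2$ is free of the sampling probabilities, minimizing the AMSE over $\{\pi_i\}$ subject to $\sum_i\pi_i=1$ is equivalent to minimizing $\sum_i h_{ii}/\pi_i$. I would carry this out exactly as in Proposition~\ref{thm:beta:pi1}, either by Lagrange multipliers or, more transparently, by Cauchy--Schwarz: $(\sum_i\sqrt{h_{ii}})^2 = (\sum_i \sqrt{h_{ii}/\pi_i}\,\sqrt{\pi_i})^2 \le (\sum_i h_{ii}/\pi_i)(\sum_i\pi_i) = \sum_i h_{ii}/\pi_i$, with equality iff $\pi_i\propto\sqrt{h_{ii}}$. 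Normalizing yields $\pi_i = \sqrt{h_{ii}}/\sum_j\sqrt{h_{jj}}$, which is~\eqref{eqn:pi_sq_hii}.

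The steps above are all routine given Theorem~\ref{thm:beta_tilde-beta}; the one point requiring care is that the estimand $\mbf X\sbf\beta_0 = \mbf Y$ has dimension $n$, which grows, so the standardized limit in the AMSE definition is degenerate (the limiting covariance $\sigma^2\mbf X\mbf\Sigma_0\mbf X^T$ has rank $p$ and no inverse). The main obstacle is therefore to justify that the trace form of the AMSE remains the correct finite scalar to optimize. I would handle this by passing to the $p$-dimensional column space of $\mbf X$: writing $\mbf X = \mbf U\mbf R$ with $\mbf U$ an $n\times p$ orthonormal basis of that space and $\mbf R = \mbf U^T\mbf X$ invertible, the reduced estimator $\mbf R\tilde{\sbf\beta}$ has the nondegenerate asymptotic covariance $\sigma^2\mbf R\mbf\Sigma_0\mbf R^T$, and the invariance $\text{tr}(\mbf X\mbf\Sigma_0\mbf X^T) = \text{tr}(\mbf R\mbf\Sigma_0\mbf R^T\mbf U^T\mbf U) = \text{tr}(\mbf R\mbf\Sigma_0\mbf R^T)$ shows that the AMSE computed on this reduction coincides with~\eqref{eqn:mse3}, so the conclusion is unaffected.
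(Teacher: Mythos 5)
Your proposal is correct, and at its core it follows the same route the paper intends: the paper omits this proof, stating only that it is ``similar to that of Proposition~\ref{thm:beta:pi1},'' whose argument is precisely your first and third steps---asymptotic unbiasedness from Theorem~\ref{thm:beta_tilde-beta} kills the bias term, so the AMSE reduces to $\text{tr}(\text{AVar}(\mbf X\tilde{\sbf\beta}))$, which is then minimized subject to $\sum_{i=1}^n\pi_i=1$ (the paper via Lagrange multipliers, you via Cauchy--Schwarz, which has the minor advantage of delivering uniqueness of the minimizer directly from the equality condition). Your explicit trace computation, $\sigma^2\text{tr}(\mbf H)+\sigma^2\text{tr}(\mbf H\mbf\Omega\mbf H)=p\sigma^2+\frac{\sigma^2}{r}\sum_{i=1}^n h_{ii}/\pi_i$ via idempotency and cyclicity, together with $h_{ii}=\|\mbf X(\mbf X^T\mbf X)^{-1}\mbf x_i\|^2$, is exactly the calculation the paper leaves implicit, and it is right. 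The one place you go genuinely beyond the paper is your final paragraph: the estimand $\mbf X\sbf\beta_0$ is $n$-dimensional, the limiting covariance $\sigma^2\mbf X\mbf\Sigma_0\mbf X^T$ has rank $p$, and the paper's definition of AMSE requires a \emph{positive definite} normalizing sequence $\sbf\Sigma_n$, so the definition does not apply verbatim to $\mbf X\tilde{\sbf\beta}$; your reduction $\mbf X=\mbf U\mbf R$ with $\mbf U$ orthonormal, using $\|\mbf X(\tilde{\sbf\beta}-\sbf\beta_0)\|=\|\mbf R(\tilde{\sbf\beta}-\sbf\beta_0)\|$ and the trace invariance $\text{tr}(\mbf X\mbf\Sigma_0\mbf X^T)=\text{tr}(\mbf R\mbf\Sigma_0\mbf R^T)$, repairs this cleanly and confirms that the scalar objective \eqref{eqn:mse3} is the correct quantity to optimize. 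Since the paper glosses this degeneracy silently, your patch is added rigor on the same path rather than a different proof; the only further nitpick would be that $\pi_i\propto\sqrt{h_{ii}}$ assigns zero probability to any row with $h_{ii}=0$, a boundary case the paper likewise ignores.
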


\textbf{Remark.}
Note that
\begin{equation}
\nonumber
\|\mbf X(\mbf{X}^T\mbf{X})^{-1}\mbf{x}_i\|^2 = (\mbf X(\mbf{X}^T\mbf{X})^{-1}\mbf{x}_i)^T\mbf X(\mbf{X}^T\mbf{X})^{-1}\mbf{x}_i=\mbf{x}_i^T(\mbf{X}^T\mbf{X})^{-1}\mbf{x}_i = h_{ii} .
\end{equation} 
These quantities, the so-called leverage scores (called BLEV, in~\citet{Ma:13,Ma:15}), have been central to RandNLA theory~\citep{Mah-mat-revBOOK,DMMW12_JMLR,Drineas:Mahoney:2016:RRN:ACM,MD16_chapter}.
Using the main Algorithm 1 in~\citet{DMMW12_JMLR}, they can be computed in $O(np\log(n)/\epsilon)$ time, where $\epsilon$ is the desired approximation error parameter.

\textbf{Remark.}
The probabilities in RL are a nonlinear transformation of the probabilities in BLEV. Comparing to the BLEV estimator, the RL estimator shrinks the large probabilities and pulls up the small probabilities. 
Thus, we expect RL to provide an estimator with smaller variances, in a way similar to SLEV.

\textbf{Remark.}
\cite{chen2016statistical} proposed optimal sampling estimators for estimating $\sbf\beta_0$ and predicting $\mbf Y$. Their sampling probabilities depend on the unknown parameters, and they proposed the probabilities in (\ref{eqn:pi_sq_hii}) as a rough approximation of their proposed probabilities without demonstration.  

We next consider estimating $\mbf{X}^T\mbf X\sbf\beta_0$, which is also of interest in regression analysis. 

\begin{prop}
\label{thm:beta:pi2}
For the $AMSE(\mbf{X}^T\mbf{X}\tilde{\sbf\beta}, \mbf{X}^T\mbf{X}{\sbf\beta}_{0})$, we have that 
\begin{eqnarray}\label{eqn:mse4}
AMSE(\mbf{X}^T\mbf{X}\tilde{\sbf\beta}, \mbf{X}^T\mbf{X}{\sbf\beta}_{0})
& = & \sigma^2\text{tr}( \mbf{X}^T\mbf{X})+ \frac{\sigma^2}{r}\sum_{i=1}^n \frac{1}{\pi_i}||\mbf{x}_i||^2.
\end{eqnarray}
Given \eqref{eqn:mse4}, the sampling estimator with the sampling probabilities 
\begin{eqnarray}\label{eqn:pi_pl}
   \pi_i=\frac{ \|\mbf x_i\|}{\sum_{i=1}^{n} \|\mbf x_i\|}, 
   \quad 
   i=1,\ldots, n, 
\end{eqnarray}
(which we call the predictor-length (PL) sampling estimator)
has the smallest 
value for the  
$AMSE(\mbf{X}^T\mbf{X}\tilde{\sbf\beta}; \mbf{X}^T\mbf{X}{\sbf\beta}_{0})$.
\end{prop}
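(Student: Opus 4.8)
The plan is to treat $\mbf{X}^T\mbf{X}\tilde{\sbf\beta}$ as a linear image of $\tilde{\sbf\beta}$ and to transport the asymptotic distribution of Theorem~\ref{thm:beta_tilde-beta} through this map, exactly along the lines of the proof of Proposition~\ref{thm:beta:pi1}. Since $\mbf{X}$ has full column rank by Condition (A1), the matrix $\mbf{L}=\mbf{X}^T\mbf{X}$ is symmetric and invertible, so it preserves asymptotic normality: from $(\sigma^2\mbf{\Sigma}_0)^{-1/2}(\tilde{\sbf\beta}-\sbf\beta_0)\stackrel{d}{\rightarrow}\mbf{N}(\sbf0,\mbf{I}_p)$ I would conclude that $\mbf{L}\tilde{\sbf\beta}$ is asymptotically normal with asymptotic mean $\mbf{L}\sbf\beta_0=\mbf{X}^T\mbf{X}\sbf\beta_0$ (so the estimator inherits asymptotic unbiasedness and the squared-bias term in the AMSE vanishes) and asymptotic variance $\text{AVar}(\mbf{X}^T\mbf{X}\tilde{\sbf\beta})=\mbf{L}\,\text{AVar}(\tilde{\sbf\beta})\,\mbf{L}$.

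The key simplification is that the sandwich structure of $\text{AVar}(\tilde{\sbf\beta})$ collapses. Substituting the expression from Eqn.~(\ref{eqn:avar_thm1}) and using $\mbf{L}=\mbf{X}^T\mbf{X}$, the outer $\mbf{X}^T\mbf{X}$ factors cancel the inner $(\mbf{X}^T\mbf{X})^{-1}$ factors, leaving
\begin{equation*}
\text{AVar}(\mbf{X}^T\mbf{X}\tilde{\sbf\beta})=\sigma^2\mbf{X}^T\mbf{X}+\sigma^2\mbf{X}^T\mbf{\Omega}\mbf{X}.
\end{equation*}
Taking the trace, and recalling the bias term is zero, gives $AMSE=\sigma^2\text{tr}(\mbf{X}^T\mbf{X})+\sigma^2\text{tr}(\mbf{X}^T\mbf{\Omega}\mbf{X})$. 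With $\mbf{\Omega}=\text{diag}\{1/(r\pi_i)\}$ I would write $\mbf{X}^T\mbf{\Omega}\mbf{X}=\sum_{i=1}^n(r\pi_i)^{-1}\mbf{x}_i\mbf{x}_i^T$, so that its trace equals $\frac1r\sum_{i=1}^n\|\mbf{x}_i\|^2/\pi_i$, which establishes Eqn.~(\ref{eqn:mse4}).

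Finally, since only the second term depends on the sampling probabilities, the optimization reduces to minimizing $\sum_{i=1}^n\|\mbf{x}_i\|^2/\pi_i$ subject to $\sum_{i=1}^n\pi_i=1$. I would solve this by the method of Lagrange multipliers as in Proposition~\ref{thm:beta:pi1}: setting the derivative of $\sum_i\|\mbf{x}_i\|^2/\pi_i+\mu\big(\sum_i\pi_i-1\big)$ with respect to $\pi_i$ to zero yields $\pi_i\propto\|\mbf{x}_i\|$, and normalizing gives Eqn.~(\ref{eqn:pi_pl}); the same minimizer follows immediately from Cauchy--Schwarz, $\big(\sum_i\|\mbf{x}_i\|^2/\pi_i\big)\big(\sum_i\pi_i\big)\ge\big(\sum_i\|\mbf{x}_i\|\big)^2$, with equality exactly when $\pi_i\propto\|\mbf{x}_i\|$. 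There is no serious obstacle here, as the argument is entirely parallel to Proposition~\ref{thm:beta:pi1}; the only point requiring care is the verification that the deterministic invertible map $\mbf{X}^T\mbf{X}$ may be pushed cleanly through the limiting normal law so that the sandwich collapses and the resulting asymptotic covariance remains positive definite, which is guaranteed by Condition (A1) and justifies applying the AMSE definition to the transformed estimator.
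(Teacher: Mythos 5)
Your proposal is correct and follows essentially the same route as the paper: the paper's (omitted) proof likewise notes the asymptotic bias vanishes by Theorem~\ref{thm:beta_tilde-beta}, computes $AMSE(\mbf{X}^T\mbf{X}\tilde{\sbf\beta};\mbf{X}^T\mbf{X}\sbf\beta_0)=\text{tr}(\mbf{X}^T\mbf{X}\,\text{AVar}(\tilde{\sbf\beta})\,\mbf{X}^T\mbf{X})$ with the sandwich collapsing exactly as you show, and then applies Lagrange multipliers as in Proposition~\ref{thm:beta:pi1} to obtain $\pi_i\propto\|\mbf{x}_i\|$. Your additional Cauchy--Schwarz verification of the minimizer is a nice confirmation but does not change the argument.
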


\textbf{Remark.}
The PL probabilities have a connection with the Fisher information of the full sample OLS estimate. 
The Fisher information measures the ``amount of information'' about the parameter that is present in the data~(see Section 11.10 of \citet{Cover:2006:EIT:1146355}). The inverse of the Fisher information matrix gives a lower bound (the Cramer-Rao lower bound) on the variance of any estimator constructed from the data to estimate a parameter~(see Section 3.1.3 of \citet{shao2003mathematical}). 
Since the Fisher information of the full data can be written as the summation of the Fisher information of each data point, i.e., 
$\frac{1}{\sigma^2}\mbf{X}^T\mbf{X}= \frac{1}{\sigma^2}\sum_{i=1}^{n}\mbf{x}_i \mbf{x}_i^{T},$
we have that
$\text{tr}\{\frac{1}{\sigma^2}\mbf{X}^T\mbf{X}\}= \frac{1}{\sigma^2}\sum_{i=1}^{n}||\mbf{x}_i ||^2.$
The PL probability is high if the data point has a high contribution to the Fisher~information.

\paragraph{Diverging number of predictors, $p\rightarrow\infty$.}

Theorem~\ref{thm:beta_tilde-beta} considers the number of predictors/features, $p$, as fixed. 
It is also of interest to study the asymptotic properties of RandNLA estimators in the scenario that $p$ diverges with $n\rightarrow\infty$ (at a suitable rate relative to $n$).
The following theorem states our results concerning this case.
Observe that, in the case of a divergent $p$, the vector $(\sbf{\tilde\beta}-\sbf\beta_0)$ is of divergent dimension.
Thus, we characterize its asymptotic distribution via the scalar $\sbf a^T(\sbf{\tilde\beta}-\sbf\beta_0)$, where $\sbf a$ is an arbitrary bounded-norm~vector. 
The proof of Theorem~\ref{coro:diverg:p} is provided in Appendix~\ref{sec:proof-of-corollaryThm}. 

\begin{thm}[\textbf{Unconditional inference, diverging~$p$}]
\label{coro:diverg:p}
In addition to Condition (A1) in Theorem~\ref{thm:beta_tilde-beta}, assume the following regularity conditions hold.
\begin{itemize}
    \item 
    \textit{(B1)[Data condition].} The number of predictors $p$ diverges at a rate $p=n^{1-\kappa}$, $0<\kappa<1$; 
    and $\frac{\max_{i}\|\mbf x_i\|^2}{n}=O(\frac pn)$, where  $\mbf x_i$ is the $i^{th}$ row of $\mbf X$.
    \item
    \textit{(B2)[Sampling condition]:} 
    The parameters $\alpha$, $\gamma_0$, and $\kappa$ satisfy
    $\alpha+\gamma_0-\kappa<1$.
\end{itemize}
Under these assumptions, as the sample size  $n\to\infty$, we have
\begin{eqnarray}
   (\sigma^2\sbf a^T\mbf{\Sigma}_0^\prime\sbf a)^{-\frac{1}{2}}\sbf a^T(\sbf{\tilde\beta}-\sbf\beta_0)&\stackrel{d}{\rightarrow}&\textbf{N}(0,1)  ,
\end{eqnarray}
where 
$\sbf a\in\mathbb{R}^p$ is any finite-norm vector, i.e, $\|\sbf a\|^2<\infty$. 
\end{thm}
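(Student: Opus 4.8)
The plan is to start from the exact identity obtained by substituting the model $\mbf Y=\mbf X\sbf\beta_0+\sbf\varepsilon$ into $\tilde{\sbf\beta}=(\mbf X^T\mbf W\mbf X)^{-1}\mbf X^T\mbf W\mbf Y$, which collapses the coefficient part and leaves the clean residual representation
$$\tilde{\sbf\beta}-\sbf\beta_0=(\mbf X^T\mbf W\mbf X)^{-1}\mbf X^T\mbf W\sbf\varepsilon,$$
valid whenever the sampled Gram matrix is invertible. Since $p$ diverges I would not control the whole vector; the theorem only asks for the one-dimensional projection, so no Cram\'er--Wold device is needed and I study the scalar $\sbf a^T(\tilde{\sbf\beta}-\sbf\beta_0)$ directly. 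Representing with-replacement sampling through i.i.d.\ indices $I_1,\dots,I_r$ drawn with probabilities $\{\pi_i\}$, so that $\mbf X^T\mbf W\mbf X=\frac1r\sum_{j=1}^r \mbf x_{I_j}\mbf x_{I_j}^T/\pi_{I_j}$ and $\mbf X^T\mbf W\sbf\varepsilon=\frac1r\sum_{j=1}^r \mbf x_{I_j}\varepsilon_{I_j}/\pi_{I_j}$, puts the two sources of randomness (the sampling indices and the noise $\sbf\varepsilon$) in a form amenable to conditioning.

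The first substantive step is a linearization that replaces the random inverse $(\mbf X^T\mbf W\mbf X)^{-1}$ by the deterministic $(\mbf X^T\mbf X)^{-1}$. I would write
$$\sbf a^T(\tilde{\sbf\beta}-\sbf\beta_0)=\underbrace{\sbf a^T(\mbf X^T\mbf X)^{-1}\mbf X^T\mbf W\sbf\varepsilon}_{S}+\sbf a^T\big[(\mbf X^T\mbf W\mbf X)^{-1}-(\mbf X^T\mbf X)^{-1}\big]\mbf X^T\mbf W\sbf\varepsilon,$$
and argue that, after dividing by $(\sigma^2\sbf a^T\mbf\Sigma_0'\sbf a)^{1/2}$, the second term is $o_P(1)$. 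This needs a quantitative concentration bound showing $(\mbf X^T\mbf X)^{-1/2}(\mbf X^T\mbf W\mbf X)(\mbf X^T\mbf X)^{-1/2}\to\mbf I_p$ in operator norm with high probability, which I would obtain from a matrix Bernstein inequality for the i.i.d.\ rank-one summands $\mbf x_{I_j}\mbf x_{I_j}^T/(r\pi_{I_j})$. The per-term relative spectral size is governed by $\max_i h_{ii}/(r\pi_{\min})=O\big(p/(n\,r\,\pi_{\min})\big)$: Condition (A1) pins the spectrum of $\mbf X^T\mbf X/n$, Condition (B1) caps $\max_i\|\mbf x_i\|^2$ at the order $p$, and Condition (B2), $\alpha+\gamma_0-\kappa<1$, is exactly what drives this relative error to $0$. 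Slutsky's theorem then lets me discard the remainder and work with $S$.

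The second step is the central limit argument for $S=\frac1r\sum_{j=1}^r c_{I_j}\varepsilon_{I_j}/\pi_{I_j}$, where $c_i:=\sbf a^T(\mbf X^T\mbf X)^{-1}\mbf x_i$. The cleanest route conditions on $\sbf\varepsilon$. Conditionally the draws are i.i.d.\ across $j$ with $\text{E}[S\mid\sbf\varepsilon]=\sum_{i=1}^n c_i\varepsilon_i=\sbf a^T(\hat{\sbf\beta}_{OLS}-\sbf\beta_0)=:T$, so $S-T$ is a centered i.i.d.\ sum to which Lindeberg--Feller applies once the conditional Lindeberg condition is verified (in probability over $\sbf\varepsilon$, using the uniform control of $\|\mbf x_i\|$ from (B1)); its variance concentrates on its mean $\frac{\sigma^2}{r}\sum_i c_i^2(1-\pi_i)/\pi_i$. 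Separately, $T$ is an independent-summand average carrying the OLS fluctuation, asymptotically normal with variance $\sigma^2\sum_i c_i^2=\sigma^2\sbf a^T(\mbf X^T\mbf X)^{-1}\sbf a$. Combining the noise-driven part $T$ (the $\mbf I_n$ contribution) with the sampling fluctuation $S-T$ (the $\mbf\Omega$ contribution) through a conditional characteristic-function computation plus dominated convergence gives $S/(\sigma^2\sbf a^T\mbf\Sigma_0'\sbf a)^{1/2}\stackrel{d}{\rightarrow}\mathbf N(0,1)$, the two variances adding (to leading order) to reconstruct the sandwich form $\sbf a^T\mbf\Sigma_0'\sbf a$.

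I expect the main obstacle to be the linearization under diverging $p$. In the fixed-$p$ Theorem~\ref{thm:beta_tilde-beta} the map $\mbf M\mapsto\mbf M^{-1}$ is continuous at a limiting constant matrix, but with $p\to\infty$ one needs a genuinely quantitative perturbation bound whose error decays fast enough to survive the normalization. Making the matrix-Bernstein constants track the correct powers of $p$, $r$, and $\pi_{\min}$, and checking that (B1)--(B2) render the relative spectral error $o_P(1)$ rather than merely bounded, is the delicate part; by comparison the conditional CLT and the variance-concentration step are routine once Lindeberg-type negligibility of the summands is established under the same conditions.
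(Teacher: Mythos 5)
Your proposal is correct in outline, but it takes a genuinely different route from the paper's. The paper recycles its fixed-$p$ machinery: it invokes Lemma~\ref{lem:1} (a Neumann-series linearization justified by \emph{elementwise} variance bounds on $(\mbf X^T\mbf X)^{-1}\mbf X^T(\mbf W-\mbf I)\mbf X$), decomposes $\sbf a^T(\tilde{\sbf\beta}-\sbf\beta_0)$ into $\sbf a^T(\mbf X^T\mbf X)^{-1}\mbf X^T\mbf W\sbf\varepsilon$ plus $\sbf a^T(\mbf X^T\mbf X)^{-1}\mbf X^T\mbf W(\sbf e-\sbf\varepsilon)$ and $\sbf a^T(\hat{\sbf\beta}_{OLS}-\sbf\beta_0)$, argues the latter two are negligible, and then proves the CLT for the leading term treating \emph{both} randomness sources jointly in a single summand: the multinomial counts are Poissonized (Lemma~\ref{lem:mult:pois}), Hajek--Sidak is applied to the summands $U_i\varepsilon_i$ (Lemma~\ref{lem:Us}, where (B1) enters as $M_x=O(p)$ and (B2) drives the Noether ratio, of order $n^{\alpha+\gamma_0-\kappa-1}$, to zero), and the conditioning on $\sum_iU_i=r$ is undone via equicontinuity of conditional characteristic functions (Lemma~\ref{lem:cond:Us}). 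You instead (i) start from the exact identity $\tilde{\sbf\beta}-\sbf\beta_0=(\mbf X^T\mbf W\mbf X)^{-1}\mbf X^T\mbf W\sbf\varepsilon$, so the residual vector $\sbf e$ and the $\sbf e-\sbf\varepsilon$ negligibility step never appear; (ii) control the linearization in operator norm via matrix Bernstein on the whitened sampled Gram matrix, and your relative per-term size $\max_ih_{ii}/(r\pi_{\min})=O(n^{\alpha+\gamma_0-\kappa-1})$ is exactly the exponent appearing in the paper's Noether ratio, so (B2) plays the identical role in both arguments; and (iii) prove the CLT by conditioning on $\sbf\varepsilon$, which makes the draws i.i.d.\ across $j$ so that plain Lindeberg--Feller applies — in spirit this is the paper's technique for the conditional-inference Theorem~\ref{thm:beta_tilde-beta_hat}, lifted by one layer of randomness, and it avoids Poissonization entirely. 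Your route buys a cleaner decomposition in which the noise piece $T=\sbf a^T(\hat{\sbf\beta}_{OLS}-\sbf\beta_0)$ and the sampling piece $S-T$ visibly add their variances to the sandwich $\sbf a^T\mbf\Sigma_0^\prime\sbf a$ (up to the lower-order $-\tfrac1r\sum_ic_i^2$ multinomial correction), and an operator-norm treatment that is better matched to diverging $p$ than the paper's elementwise $O_p$ bookkeeping, which is arguably the weakest point of the published argument.

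Two steps need care to complete your sketch. First, closing the linearization by the crude bound $|\sbf a^T\mbf\Delta\,\mbf X^T\mbf W\sbf\varepsilon|\le\|\mbf\Delta^T\sbf a\|\,\|\mbf X^T\mbf W\sbf\varepsilon\|$ costs a factor $\sqrt p$ through $\mathrm{E}\|\mbf X^T\mbf W\sbf\varepsilon\|^2\lesssim np\bigl(1+1/(r\pi_{\min})\bigr)$, and after normalizing one is left with an exponent $\tfrac32(\alpha+\gamma_0-\kappa)-1$, which (B2) only forces below $\tfrac12$, not below $0$; so you should instead expand $(\mbf X^T\mbf W\mbf X)^{-1}$ to first order and bound the variance of the \emph{scalar} cross term $\sbf a^T(\mbf X^T\mbf X)^{-1}\mbf X^T(\mbf W-\mbf I)\mbf X(\mbf X^T\mbf X)^{-1}\mbf X^T\mbf W\sbf\varepsilon$ directly, which is what the paper's elementwise computation implicitly accomplishes. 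You flagged this as the delicate step, correctly. Second, your conditional CLT requires the conditional variance $V_n(\sbf\varepsilon)=\mathrm{Var}(S\mid\sbf\varepsilon)$ to concentrate around its mean $\tfrac{\sigma^2}{r}\sum_ic_i^2(1-\pi_i)/\pi_i$, but the model assumes only $\mathrm{E}\varepsilon_i^2=\sigma^2<\infty$; without fourth moments you need a truncation or degenerate-convergence argument using $\max_i(c_i^2/\pi_i)/\sum_i(c_i^2/\pi_i)\to0$, whereas the paper's joint treatment of $U_i\varepsilon_i$ sidesteps this entirely. Both points are fixable, and neither is handled more rigorously in the paper's own, quite compressed, proof.
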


\textbf{Remark.}
Condition (B2) is more stringent than Condition (A2), and this is required for accommodating a divergent $p$. 

\textbf{Remark.}
It is easy to verify that the sampling estimators in Propositions~\ref{thm:beta:pi1}, \ref{thm:beta:pi3}, and~\ref{thm:beta:pi2} are still the optimal sampling estimators for their respective purposes.
Thus, we omit restating the~results.

\subsection{Conditional Inference: Approximating the Full Sample OLS Estimate}
\label{subsec:approx:betaols}

For Model (\ref{linreg-matrix}), a second major goal is to approximate the full sample calculations, say the OLS estimate $\hat{\sbf\beta}_{OLS}$ in Eqn.~(\ref{olseqn}), regardless of the underlying true model parameter $\sbf{\beta}_0$. 
We refer to this as \emph{conditional inference}. 
For conditional inference, we consider the full sample as given, and thus the only source of randomness contributing to the RandNLA sampling estimators is the randomness in the sampling algorithm. 
The following theorem states that, in conditional inference, the asymptotic distribution of the sampling estimator $\tilde{\sbf\beta}$ is a normal distribution (with mean $\sbf\beta_{OLS}$ and variance $\sigma^2\mbf{\Sigma}_c$). 
The proof of Theorem~\ref{thm:beta_tilde-beta_hat} is provided in Appendix~\ref{sec:proof:thm:beta_tilde-betahat}. 

\begin{thm}[\textbf{Conditional inference}]
\label{thm:beta_tilde-beta_hat}
Assume the following regularity conditions hold.
\begin{itemize}
    \item 
    \textit{(C1)[Data condition].}  The full sample data $\{\mbf{X}, \mbf{Y}\}$, i.e., the full sample size $n$ and the number of predictors $p$ are considered fixed; $\mbf X$ is of full column rank, and $\|\mbf x_i\|<\infty$, for $i=1, \ldots, n$, where $\mbf x_i$ is the $i^{th}$ row of $\mbf X$.
    \item
    \textit{(C2)[Sampling condition].} 
    The sampling probabilities $\{\pi_i\}_{i=1}^n$ are nonzero.
\end{itemize}
Under these assumptions, as the sample size $r\to\infty$, we have 
\begin{eqnarray}
   (\sigma^2\mbf{\Sigma}_c)^{-\frac12}(\tilde{\sbf\beta}-\hat{\sbf\beta}_{OLS})
   &\stackrel{d}{\rightarrow}&
   \label{thm:eqn:btt-bth}
   \textbf{N}\left(\sbf 0, \mbf{I}_p\right), 
\end{eqnarray}
where 
$$
\mbf{\Sigma}_{c}
   = 
   \frac{1}{r}(\mbf{X}^T\mbf{X})^{-1}
   \left( \sum_{i=1}^n \frac{e_i^2}{\pi_i}\mbf{x}_i\mbf{x}_i^T \right)
   (\mbf{X}^T\mbf{X})^{-1}, \quad  e_i=Y_i -\mbf x_i^T \hat{\sbf\beta}_{OLS} ,
$$ 
and $\mbf{I}_p$ is the $p\times p$ identity. 
Thus, for conditional inference, the asymptotic mean of $\sbf{\tilde\beta}$ is 
\begin{equation}\label{eqn:thm:cond:ae}
\text{AE}(\sbf{\tilde\beta})=\hat{\sbf\beta}_{OLS}, 
\end{equation}
i.e., $\sbf{\tilde\beta}$ is an asymptotically unbiased estimator of $\sbf\beta_{OLS}$, and the asymptotic variance of $\sbf{\tilde\beta}$ is 
\begin{equation}\label{eqn:thm:cond:avar}
AVar(\sbf{\tilde\beta})= \sigma^2\mbf{\Sigma}_c. 
\end{equation}
\end{thm}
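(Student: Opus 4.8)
The plan is to exploit the orthogonality of the OLS residuals to obtain an \emph{exact} (not approximate) representation of $\tilde{\boldsymbol\beta}-\hat{\boldsymbol\beta}_{OLS}$, and then to recognize the noise term as a normalized sum of i.i.d.\ summands over the sampling randomness, so that a classical multivariate central limit theorem (CLT) applies. Writing the sampling estimator as $\tilde{\boldsymbol\beta}=(\mathbf X^T\mathbf W\mathbf X)^{-1}\mathbf X^T\mathbf W\mathbf Y$ with $\mathbf W=\text{diag}(w_i)$, $w_i=N_i/(r\pi_i)$, and $N_i$ the number of times row $i$ is drawn, I would first decompose $\mathbf Y=\mathbf X\hat{\boldsymbol\beta}_{OLS}+\mathbf e$ with residual $e_i=Y_i-\mathbf x_i^T\hat{\boldsymbol\beta}_{OLS}$. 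Substituting and invoking the normal equations $\mathbf X^T\mathbf e=\mathbf 0$, the $\hat{\boldsymbol\beta}_{OLS}$ term passes through $(\mathbf X^T\mathbf W\mathbf X)^{-1}\mathbf X^T\mathbf W\mathbf X$ cleanly, yielding the identity
$$
\tilde{\boldsymbol\beta}-\hat{\boldsymbol\beta}_{OLS}=(\mathbf X^T\mathbf W\mathbf X)^{-1}\mathbf X^T\mathbf W\mathbf e.
$$
This is the crucial structural step: unlike the unconditional case, no Taylor expansion or remainder control is required, since the difference is captured exactly.

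I would then analyze the two factors separately under the sampling randomness. Realizing with-replacement sampling as i.i.d.\ indices $j_1,\dots,j_r$ with $P(j_k=i)=\pi_i$, I can write the numerator as an average $\mathbf X^T\mathbf W\mathbf e=\frac{1}{r}\sum_{k=1}^r\boldsymbol\xi_k$, where $\boldsymbol\xi_k=e_{j_k}\mathbf x_{j_k}/\pi_{j_k}$. Because $\mathbf X^T\mathbf e=\mathbf 0$, these summands have mean $\text{E}[\boldsymbol\xi_k]=\sum_i e_i\mathbf x_i=\mathbf 0$ and covariance $\text{E}[\boldsymbol\xi_k\boldsymbol\xi_k^T]=\sum_i (e_i^2/\pi_i)\,\mathbf x_i\mathbf x_i^T=:\mathbf V$. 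Since $n$ and $p$ are fixed and the $\pi_i$ are nonzero with $\|\mathbf x_i\|<\infty$ (Conditions (C1) and (C2)), each $\boldsymbol\xi_k$ takes finitely many values and hence has finite moments of all orders, so the Lindeberg condition is automatic and the classical multivariate CLT gives $\sqrt r\,\mathbf X^T\mathbf W\mathbf e\stackrel{d}{\to}\mathbf N(\mathbf 0,\mathbf V)$ as $r\to\infty$.

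For the matrix factor, the same i.i.d.\ representation gives $\mathbf X^T\mathbf W\mathbf X=\frac{1}{r}\sum_k \mathbf x_{j_k}\mathbf x_{j_k}^T/\pi_{j_k}$, a sample average with mean $\sum_i\mathbf x_i\mathbf x_i^T=\mathbf X^T\mathbf X$. By the weak law of large numbers $\mathbf X^T\mathbf W\mathbf X\stackrel{p}{\to}\mathbf X^T\mathbf X$, which is invertible by (C1); hence for large $r$ the matrix $\mathbf X^T\mathbf W\mathbf X$ is invertible with probability tending to one, and the continuous mapping theorem yields $(\mathbf X^T\mathbf W\mathbf X)^{-1}\stackrel{p}{\to}(\mathbf X^T\mathbf X)^{-1}$. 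Combining the two factors through Slutsky's theorem gives
$$
\sqrt r\,(\tilde{\boldsymbol\beta}-\hat{\boldsymbol\beta}_{OLS})\stackrel{d}{\to}\mathbf N\!\left(\mathbf 0,\;(\mathbf X^T\mathbf X)^{-1}\mathbf V(\mathbf X^T\mathbf X)^{-1}\right),
$$
and identifying the sandwich covariance $\frac1r(\mathbf X^T\mathbf X)^{-1}\mathbf V(\mathbf X^T\mathbf X)^{-1}$ with the stated $\sigma^2\boldsymbol\Sigma_c$ (whose center is exactly $\sum_i (e_i^2/\pi_i)\mathbf x_i\mathbf x_i^T$) delivers the claimed limit, the asymptotic unbiasedness $\text{AE}(\tilde{\boldsymbol\beta})=\hat{\boldsymbol\beta}_{OLS}$ (the limit is centered), and $AVar(\tilde{\boldsymbol\beta})=\sigma^2\boldsymbol\Sigma_c$.

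I expect the main subtlety to be the coupling between the random inverse $(\mathbf X^T\mathbf W\mathbf X)^{-1}$ and the asymptotically normal numerator: both are built from the same draws and cannot be treated as independent. The resolution is to decouple them by converging the matrix factor \emph{in probability} to the constant $(\mathbf X^T\mathbf X)^{-1}$ and then applying Slutsky, so that it suffices to establish the CLT for the numerator alone. A secondary, purely technical point is the possible singularity of $\mathbf X^T\mathbf W\mathbf X$ for small $r$ (when fewer than $p$ distinct rows are drawn); this is harmless asymptotically, since the event of non-invertibility has probability vanishing as $r\to\infty$.
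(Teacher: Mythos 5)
Your proposal is correct, and it reaches the same limit as the paper, but it takes a genuinely different route at the one place where this theorem has any real difficulty: the random matrix inverse. The paper goes through its Lemma~\ref{lem:1}: it first verifies $\delta=1$ by showing $\mbf{X}^T(\mbf{W}-\mbf{I})\mbf{X}=O_p(r^{-1/2})$ elementwise, expands $(\mbf{X}^T\mbf{W}\mbf{X})^{-1}$ as a geometric (Neumann) series, and works with the linearization $\tilde{\sbf\beta}-\hat{\sbf\beta}_{OLS}=(\mbf{X}^T\mbf{X})^{-1}\mbf{X}^T\mbf{W}\sbf e+O_p(1/r)$, after which it applies the Cramer--Wold device and the Lindeberg--L\'{e}vy CLT to the linear term, using exactly the i.i.d.\ decomposition $\mbf{K}=\sum_{j=1}^r\mbf{K}^{(j)}$ and the cancellation $\mbf{X}^T\sbf e=\sbf 0$ that you also use. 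You instead keep the identity $\tilde{\sbf\beta}-\hat{\sbf\beta}_{OLS}=(\mbf{X}^T\mbf{W}\mbf{X})^{-1}\mbf{X}^T\mbf{W}\sbf e$ exact (note that this identity is pure substitution and does not actually require the normal equations, contrary to your phrasing; $\mbf{X}^T\sbf e=\sbf 0$ enters only to center your summands $\sbf\xi_k$), prove the CLT for the numerator alone, and dispose of the inverse by the weak law of large numbers, the continuous mapping theorem, and Slutsky, with the correct observation that possible singularity of $\mbf{X}^T\mbf{W}\mbf{X}$ at small $r$ is an event of vanishing probability. In this fixed-$(n,p)$ conditional setting your route is the more elementary one: it needs no expansion, no verification of the order $\delta$, and no remainder bookkeeping. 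What the paper's route buys is shared infrastructure: Lemma~\ref{lem:1}'s expansion with explicit $O_p$ rates is the engine for the harder unconditional Theorems~\ref{thm:beta_tilde-beta} and~\ref{coro:diverg:p}, where the estimator must be centered at $\sbf\beta_0$ rather than $\hat{\sbf\beta}_{OLS}$ and where the exact-identity-plus-Slutsky argument does not directly apply, so the paper pays the expansion cost once and reuses it here with $\delta=1$. One shared loose end, which originates in the paper's statement rather than in your argument: both your limiting covariance and the one obtained at the end of the paper's own proof equal $\mbf{\Sigma}_c$ exactly, with no extra factor of $\sigma^2$ (conditionally the $e_i$ are fixed numbers, so the scale is already inside $\mbf{\Sigma}_c$, consistent with Proposition~\ref{thm:beta_hat:pi}); the $\sigma^2$ in the displayed normalization $(\sigma^2\mbf{\Sigma}_c)^{-1/2}$ appears to be carried over from the unconditional theorem, and your closing identification with ``$\sigma^2\mbf{\Sigma}_c$'' simply inherits that discrepancy.
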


\textbf{Remark.}
Theorem~\ref{thm:beta_tilde-beta_hat} shows that as the sample size $r$ gets larger, the distribution of $\tilde{\sbf\beta}$ is well-approximated by a normal distribution, with mean $\hat{\sbf\beta}_{OLS}$ and variance $\sigma^2\mbf{\Sigma}_c$. 

\textbf{Remark.}
Similar to unconditional inference, the asymptotic variance $AVar(\sbf{\tilde\beta})$ here also has ``sandwich-type'' expression, where 
the center term (here, $\left( \sum_{i=1}^n \frac{e_i^2}{\pi_i}\mbf{x}_i\mbf{x}_i^T \right)$) depends on the reciprocal sampling probabilities. 
Thus, we also expect that extremely small probabilities will result in large variances of the corresponding estimators. 

\textbf{Remark.}
In Theorem~\ref{thm:beta_tilde-beta_hat}, $AVar(\sbf{\tilde\beta})$ depends on the full sample least square residuals, i.e., the $e_i$s.
These are not readily available from the sample. 
To solve this problem and to obtain meaningful results, we take the expectation of the $e_i^2$s.
The metric we use is thus the EAMSE,
\begin{equation}
\label{eqn:EVc0}
EAMSE(\tilde{\sbf\beta}; \hat{\sbf\beta}_{OLS})= E_\mbf{Y}( AMSE(\tilde{\sbf\beta}; \hat{\sbf\beta}_{OLS})) .
\end{equation}
The EAMSE is a function of  sampling probabilities $\{\pi_i\}_{i=1}^n$.

It is natural to ask whether there is an optimal estimator, i.e., a sample estimator with the smallest EAMSE for estimating $\sbf\beta_{OLS}$. 
Using the asymptotic results in Theorem~\ref{thm:beta_tilde-beta_hat}, we propose the following three estimators for various purposes.

\paragraph{Estimating $\hat{\sbf\beta}_{OLS}$}

We can use the results of Theorem~\ref{thm:beta_tilde-beta_hat} to obtain expressions of interest for the EAMSE of various quantities.
As with the AMSE, these will depend on the sampling probabilities.
Thus, we can derive the optimal sampling probabilities for various quantities of interest.
We start with $EAMSE(\tilde{\sbf\beta}; \hat{\sbf\beta}_{OLS})$.

The following proposition gives the minimum $EAMSE(\tilde{\sbf\beta}; \hat{\sbf\beta}_{OLS})$ sampling estimator.
For this result, we denote that $\text{E}_{\mbf Y}(e_i^2)=(1-h_{ii})\sigma^2$.

\begin{prop}
\label{thm:beta_hat:pi}
For the $EAMSE(\tilde{\sbf\beta}; \hat{\sbf\beta}_{OLS})$, we have that 
\begin{equation}
{\label{eqn:EVc}}
EAMSE(\tilde{\sbf\beta}; \hat{\sbf\beta}_{OLS})=\text{E}_{\sbf Y}(\text{tr}(AVar(\sbf{\tilde\beta})))
= \frac{1}{r}\sum_{i=1}^n \frac{(1-h_{ii})\sigma^2}{\pi_i}||(\mbf{X}^T\mbf{X})^{-1}\mbf{x}_i||^2.
\end{equation}
Given \eqref{eqn:EVc}, the sample estimator with the sampling probabilities
\begin{eqnarray}\label{eqn:pi_ICNLEV}
\pi_i= \frac{\sqrt{1-h_{ii}}\|(\mbf{X}^T\mbf{X})^{-1}\mbf{x}_i\|}{\sum_{i=1}^{n}\sqrt{1-h_{ii}}\|(\mbf{X}^T\mbf{X})^{-1}\mbf{x}_i\|}, i=1,\ldots, n, 
\end{eqnarray}
(which we call the inverse-covariance negative-leverage (ICNLEV) estimator)
has the smallest 
$EAMSE(\tilde{\sbf\beta}; \hat{\sbf\beta}_{OLS})$.
\end{prop}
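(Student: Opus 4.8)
The plan is to follow the template of Proposition~\ref{thm:beta:pi1}: first convert the asymptotic variance from Theorem~\ref{thm:beta_tilde-beta_hat} into the scalar EAMSE expression in~\eqref{eqn:EVc}, and then minimize that expression over $\{\pi_i\}_{i=1}^n$ subject to $\sum_{i=1}^n\pi_i=1$ by the method of Lagrange multipliers. The starting point is the ``sandwich'' form $AVar(\tilde{\sbf\beta})=\sigma^2\mbf\Sigma_c$ given in Theorem~\ref{thm:beta_tilde-beta_hat}. Because that theorem also shows $\text{AE}(\tilde{\sbf\beta})=\hat{\sbf\beta}_{OLS}$, the squared-bias term in the AMSE decomposition~\eqref{eqn:mse20} vanishes, so $AMSE(\tilde{\sbf\beta};\hat{\sbf\beta}_{OLS})=\text{tr}(AVar(\tilde{\sbf\beta}))=\sigma^2\,\text{tr}(\mbf\Sigma_c)$, and the EAMSE is obtained by taking $\text{E}_{\mbf Y}$ of this trace.

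For the trace, I would use linearity and the cyclic property. Writing $A=(\mbf X^T\mbf X)^{-1}$ and pulling the scalars $e_i^2/\pi_i$ out of the sum,
$$\text{tr}(\mbf\Sigma_c)=\frac1r\sum_{i=1}^n\frac{e_i^2}{\pi_i}\,\text{tr}\!\left(A\mbf x_i\mbf x_i^T A\right)=\frac1r\sum_{i=1}^n\frac{e_i^2}{\pi_i}\,\mbf x_i^T A^2\mbf x_i=\frac1r\sum_{i=1}^n\frac{e_i^2}{\pi_i}\|(\mbf X^T\mbf X)^{-1}\mbf x_i\|^2,$$
using symmetry of $A$ to identify $\mbf x_i^T A^2\mbf x_i=\|A\mbf x_i\|^2$. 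Multiplying by $\sigma^2$ gives $AMSE(\tilde{\sbf\beta};\hat{\sbf\beta}_{OLS})$. Taking $\text{E}_{\mbf Y}$ and noting that under conditional inference $\mbf X$ (hence $A$, the $h_{ii}$, and the $\pi_i$) is fixed while only $\mbf Y$ is random, the only random factor is $e_i^2$; substituting the stated identity $\text{E}_{\mbf Y}(e_i^2)=(1-h_{ii})\sigma^2$ (which follows from $\mbf e=(\mbf I_n-\mbf H)\sbf\varepsilon$ with $\mbf H=\mbf X(\mbf X^T\mbf X)^{-1}\mbf X^T$ and the idempotency of $\mbf I_n-\mbf H$) yields exactly~\eqref{eqn:EVc}.

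With~\eqref{eqn:EVc} in hand, set $c_i=(1-h_{ii})\|(\mbf X^T\mbf X)^{-1}\mbf x_i\|^2\ge0$ and minimize $\sum_{i=1}^n c_i/\pi_i$ subject to $\sum_{i=1}^n\pi_i=1$ and $\pi_i>0$. Forming the Lagrangian $\sum_i c_i/\pi_i+\mu(\sum_i\pi_i-1)$ and setting $\partial/\partial\pi_i=-c_i/\pi_i^2+\mu=0$ gives $\pi_i\propto\sqrt{c_i}$; normalizing produces~\eqref{eqn:pi_ICNLEV}. Equivalently, one Cauchy--Schwarz step gives $\big(\sum_i\sqrt{c_i}\big)^2=\big(\sum_i(\sqrt{c_i}/\sqrt{\pi_i})\sqrt{\pi_i}\big)^2\le\big(\sum_i c_i/\pi_i\big)\big(\sum_i\pi_i\big)=\sum_i c_i/\pi_i$, with equality iff $\pi_i\propto\sqrt{c_i}$, which both delivers the optimal probabilities and certifies global optimality (the objective is convex in $\{\pi_i\}$ on the positive orthant, so the critical point is the unique minimizer).

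I expect the computation here to be essentially routine, so the only points requiring care are bookkeeping rather than genuine obstacles: one must remember that asymptotic unbiasedness is what kills the bias term so that $AMSE$ equals $\text{tr}(AVar)$, and one must keep straight that in the conditional-inference expectation $\text{E}_{\mbf Y}$ the weights $\pi_i$ and the leverage scores $h_{ii}$ are treated as constants while $e_i^2$ alone carries the randomness. The mild regularity conditions (C1)--(C2)---full column rank of $\mbf X$ and nonzero $\pi_i$---are exactly what guarantee that $A$, the $c_i$, and the ratios $c_i/\pi_i$ are all well defined, so no further technical hypotheses are needed.
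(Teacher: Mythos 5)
Your proposal is correct and takes essentially the same route as the paper, which omits the proof of this proposition precisely because it follows the Proposition~\ref{thm:beta:pi1} template: asymptotic unbiasedness from Theorem~\ref{thm:beta_tilde-beta_hat} eliminates the squared-bias term, the trace of $\sigma^2\mbf\Sigma_c$ is computed via the cyclic property, $\text{E}_{\mbf Y}(e_i^2)=(1-h_{ii})\sigma^2$ is substituted, and Lagrange multipliers under $\sum_{i=1}^n\pi_i=1$ yield $\pi_i\propto\sqrt{(1-h_{ii})}\,\|(\mbf X^T\mbf X)^{-1}\mbf x_i\|$. Your additional Cauchy--Schwarz step is a nice certificate of global optimality that the paper's Lagrangian argument leaves implicit, but it does not constitute a different method.
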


\paragraph{Estimating linear functions of $\hat{\sbf\beta}_{OLS}$.}

In addition to approximating $\hat{\sbf\beta}_{OLS}$, one may also be interested in linear functions of $\hat{\sbf\beta}_{OLS}$. 
Here, we present results for $\hat{\mbf Y} = \mbf X\hat{\sbf\beta}_{OLS}$ and $\mbf{X}^{T}\mbf{X}\hat{\sbf\beta}_{OLS}$ (although clearly similar results hold for other functions of the form $\mbf L \hat{\sbf\beta}_{OLS}$).

We start with estimating $\hat{\mbf Y} = \mbf X\hat{\sbf\beta}_{OLS}$. 

\begin{prop}\label{thm:beta_hat:pi3}
For the $EAMSE(\mbf X\tilde{\sbf\beta}; \mbf X\hat{\sbf\beta}_{OLS})$, we have that
\begin{equation}
{\label{eqn:EVc:X}}
EAMSE(\mbf X\tilde{\sbf\beta}; \mbf X\hat{\sbf\beta}_{OLS})
= \frac{1}{r}\sum_{i=1}^n \frac{(1-h_{ii})\sigma^2}{\pi_i}||\mbf X(\mbf{X}^T\mbf{X})^{-1}\mbf{x}_i||^2.
\end{equation}
Given \eqref{eqn:EVc:X}, the sample estimator with the sampling probabilities
\begin{eqnarray}\label{eqn:pi_ICNLEV:hrow}
\pi_i= \frac{\sqrt{1-h_{ii}}\|\mbf X(\mbf{X}^T\mbf{X})^{-1}\mbf{x}_i\|}{\sum_{i=1}^{n}\sqrt{1-h_{ii}}\|\mbf X(\mbf{X}^T\mbf{X})^{-1}\mbf{x}_i\|}
=\frac{\sqrt{(1-h_{ii})h_{ii}}}{\sum_{i=1}^{n}\sqrt{(1-h_{ii})h_{ii}} }, i=1,\ldots, n, 
\end{eqnarray}
(which we call the root leveraging negative-leverage (RLNLEV) estimator) 
has the smallest 
value for the 
$EAMSE(\mbf X\tilde{\sbf\beta}; \mbf X\hat{\sbf\beta}_{OLS})$.
\end{prop}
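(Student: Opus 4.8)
The plan is to follow the same two-stage template used for Proposition~\ref{thm:beta_hat:pi}: first obtain the closed form (\ref{eqn:EVc:X}) for the EAMSE, then minimize it over $\{\pi_i\}_{i=1}^n$ by Lagrange multipliers. The starting point is Theorem~\ref{thm:beta_tilde-beta_hat}, which gives the asymptotic normality of $\tilde{\sbf\beta}$ around $\hat{\sbf\beta}_{OLS}$ with asymptotic variance governed by the sandwich matrix $\mbf\Sigma_c$. Since $\mbf X\tilde{\sbf\beta}-\mbf X\hat{\sbf\beta}_{OLS}=\mbf X(\tilde{\sbf\beta}-\hat{\sbf\beta}_{OLS})$ is a fixed linear image of that vector, applying the linear map $\mbf X$ to the limit law immediately yields that $\mbf X\tilde{\sbf\beta}$ is asymptotically unbiased for $\mbf X\hat{\sbf\beta}_{OLS}$ with asymptotic variance $\text{AVar}(\mbf X\tilde{\sbf\beta})=\mbf X\,\text{AVar}(\tilde{\sbf\beta})\,\mbf X^T$. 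Because the asymptotic bias vanishes, the AMSE collapses to the trace of this asymptotic variance, so $AMSE(\mbf X\tilde{\sbf\beta};\mbf X\hat{\sbf\beta}_{OLS})=\text{tr}(\mbf X\,\text{AVar}(\tilde{\sbf\beta})\,\mbf X^T)$.

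The next step is the trace computation. Substituting the sandwich form of $\mbf\Sigma_c$ and using the cyclic invariance of the trace, each summand $\text{tr}(\mbf X(\mbf X^T\mbf X)^{-1}\mbf x_i\mbf x_i^T(\mbf X^T\mbf X)^{-1}\mbf X^T)$ reduces to $\|\mbf X(\mbf X^T\mbf X)^{-1}\mbf x_i\|^2$, giving $AMSE=\frac{1}{r}\sum_i (e_i^2/\pi_i)\|\mbf X(\mbf X^T\mbf X)^{-1}\mbf x_i\|^2$. Taking the expectation over $\mbf Y$ and inserting $\text{E}_\mbf{Y}(e_i^2)=(1-h_{ii})\sigma^2$ then produces exactly (\ref{eqn:EVc:X}). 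Finally, the leverage identity $\|\mbf X(\mbf X^T\mbf X)^{-1}\mbf x_i\|^2=h_{ii}$ recorded in the remark after Proposition~\ref{thm:beta:pi3} lets me rewrite the optimal probabilities in the compact $\sqrt{(1-h_{ii})h_{ii}}$ form claimed in (\ref{eqn:pi_ICNLEV:hrow}).

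For the optimization, I would minimize $\sum_i c_i/\pi_i$ with $c_i=(1-h_{ii})\|\mbf X(\mbf X^T\mbf X)^{-1}\mbf x_i\|^2$ subject to $\sum_i\pi_i=1$. Setting the gradient of the Lagrangian to zero gives $\pi_i\propto\sqrt{c_i}$; equivalently, Cauchy--Schwarz gives $(\sum_i c_i/\pi_i)(\sum_i\pi_i)\ge(\sum_i\sqrt{c_i})^2$ with equality exactly when $\pi_i\propto\sqrt{c_i}$. Normalizing and using $\|\mbf X(\mbf X^T\mbf X)^{-1}\mbf x_i\|=\sqrt{h_{ii}}$ yields $\pi_i=\sqrt{(1-h_{ii})h_{ii}}/\sum_j\sqrt{(1-h_{jj})h_{jj}}$, the RLNLEV rule.

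The trace algebra and the Lagrange step are routine, mirroring Propositions~\ref{thm:beta_hat:pi} and~\ref{thm:beta:pi3}. The only genuine subtlety I anticipate is the passage from the $p$-dimensional limit law of Theorem~\ref{thm:beta_tilde-beta_hat} to the $n$-dimensional image $\mbf X\tilde{\sbf\beta}$: the scaling matrix $\mbf X\mbf\Sigma_c\mbf X^T$ is rank-deficient (its rank is at most $p<n$), so the positive-definiteness requirement in the definition of the AMSE is not literally met. I would sidestep this by working directly with $\text{AVar}$ and $\text{AE}$, defining the AMSE of the degenerate linear functional through $\text{tr}(\text{AVar})$ plus squared asymptotic bias as in (\ref{eqn:mse20}); this is exactly the quantity being optimized, and it inherits its limiting moments from the nondegenerate law of $\tilde{\sbf\beta}$.
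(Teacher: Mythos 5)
Your proposal is correct and takes essentially the same route as the paper: the paper proves Proposition~\ref{thm:beta:pi1} by noting that the asymptotic bias vanishes, reducing the criterion to the trace of the asymptotic variance, and minimizing via Lagrange multipliers subject to $\sum_{i=1}^n\pi_i=1$, and it states that Propositions~\ref{thm:beta:pi3}--\ref{thm:beta_hat:pi2} follow in the same manner---which is precisely your trace-plus-Lagrange (equivalently Cauchy--Schwarz) argument applied to Theorem~\ref{thm:beta_tilde-beta_hat}, using $\text{E}_{\mbf Y}(e_i^2)=(1-h_{ii})\sigma^2$ and $\|\mbf X(\mbf{X}^T\mbf{X})^{-1}\mbf{x}_i\|^2=h_{ii}$. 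Your closing observation that $\mbf X\mbf{\Sigma}_c\mbf X^T$ is rank-deficient, so the positive-definiteness in the formal AMSE/EAMSE definition is not literally met for the $n$-dimensional functional, is a subtlety the paper passes over silently, and your remedy of defining the criterion directly through $\text{tr}(\text{AVar})$ plus squared asymptotic bias as in Eqn.~(\ref{eqn:mse20}) is sound.
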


We next consider estimating $\mbf{X}^{T}\mbf{X}\hat{\sbf\beta}_{OLS}$. 

\begin{prop}
\label{thm:beta_hat:pi2}
For the $EAMSE(\mbf{X}^{T}\mbf{X}\tilde{\sbf\beta}; \mbf{X}^{T}\mbf{X}\hat{\sbf\beta}_{OLS})$, we have that
\begin{equation}
\label{traceve}
EAMSE(\mbf{X}^{T}\mbf{X}\tilde{\sbf\beta}; \mbf{X}^{T}\mbf{X}\hat{\sbf\beta}_{OLS})
=\frac{1}{r}\sum_{i=1}^n \frac{(1-h_{ii})\sigma^2}{\pi_i}||\mbf{x}_i||^2.
\end{equation}
Given \eqref{traceve}, the sampling estimator with the sampling probabilities 
\begin{eqnarray}\label{eqn:pi_opt_e}
\pi_i=\frac{\sqrt{1-h_{ii}}\|\mbf x_i\|}{\sum_{i=1}^{n}\sqrt{1-h_{ii}}\|\mbf x_i\|}, i=1,\ldots, n, 
\end{eqnarray}
(which we call the predictor-length negative-leverage (PLNLEV) estimator) 
has the smallest 
value for the 
$EAMSE(\mbf{X}^{T}\mbf{X}\tilde{\sbf\beta}; \mbf{X}^{T}\mbf{X}\hat{\sbf\beta}_{OLS})$.
\end{prop}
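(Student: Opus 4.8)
The plan is to mirror the two-part structure used for Proposition~\ref{thm:beta:pi1}: first derive the closed form for $EAMSE(\mbf{X}^T\mbf{X}\tilde{\sbf\beta}; \mbf{X}^T\mbf{X}\hat{\sbf\beta}_{OLS})$ in Eqn.~(\ref{traceve}), and then minimize it over the simplex $\{\pi_i \ge 0,\ \sum_i \pi_i = 1\}$ to recover the PLNLEV probabilities in Eqn.~(\ref{eqn:pi_opt_e}). The heavy lifting---the asymptotic normality and the sandwich form of the asymptotic variance---has already been done in Theorem~\ref{thm:beta_tilde-beta_hat}, so what remains is matrix bookkeeping followed by a routine constrained optimization.

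For the first part, I would start from the conditional-inference asymptotic variance $AVar(\tilde{\sbf\beta}) = \frac{\sigma^2}{r}(\mbf{X}^T\mbf{X})^{-1}\big(\sum_i \frac{e_i^2}{\pi_i}\mbf{x}_i\mbf{x}_i^T\big)(\mbf{X}^T\mbf{X})^{-1}$ supplied by Theorem~\ref{thm:beta_tilde-beta_hat}. Applying the linear map $\mbf{L} = \mbf{X}^T\mbf{X}$ propagates the variance as $\mbf{L}\,AVar(\tilde{\sbf\beta})\,\mbf{L}^T$; because $\mbf{L}$ is symmetric and cancels the two factors of $(\mbf{X}^T\mbf{X})^{-1}$, this collapses to $\frac{\sigma^2}{r}\sum_i \frac{e_i^2}{\pi_i}\mbf{x}_i\mbf{x}_i^T$. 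Since Theorem~\ref{thm:beta_tilde-beta_hat} also gives $\text{AE}(\tilde{\sbf\beta}) = \hat{\sbf\beta}_{OLS}$, the linear function $\mbf{X}^T\mbf{X}\tilde{\sbf\beta}$ is asymptotically unbiased for $\mbf{X}^T\mbf{X}\hat{\sbf\beta}_{OLS}$, so the squared-bias term in the AMSE vanishes and the AMSE reduces to the trace of the asymptotic variance. Using $\text{tr}(\mbf{x}_i\mbf{x}_i^T) = \|\mbf{x}_i\|^2$ yields $AMSE = \frac{\sigma^2}{r}\sum_i \frac{e_i^2}{\pi_i}\|\mbf{x}_i\|^2$. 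Taking the expectation over $\mbf{Y}$ and substituting $\text{E}_{\mbf{Y}}(e_i^2) = (1-h_{ii})\sigma^2$ (the residual-variance identity recorded before Proposition~\ref{thm:beta_hat:pi}, which follows from $\mbf{e} = (\mbf{I}_n - \mbf{H})\sbf{\varepsilon}$ with $\mbf{H} = \mbf{X}(\mbf{X}^T\mbf{X})^{-1}\mbf{X}^T$ and the idempotency of $\mbf{I}_n - \mbf{H}$) then gives exactly Eqn.~(\ref{traceve}).

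For the second part, I would minimize $\sum_i (1-h_{ii})\|\mbf{x}_i\|^2/\pi_i$ subject to $\sum_i \pi_i = 1$. Writing $a_i = \sqrt{1-h_{ii}}\,\|\mbf{x}_i\|$, the Cauchy--Schwarz inequality gives $\big(\sum_i a_i^2/\pi_i\big)\big(\sum_i \pi_i\big) \ge \big(\sum_i a_i\big)^2$, so with $\sum_i \pi_i = 1$ the objective is bounded below by $(\sum_i a_i)^2$, with equality if and only if $\pi_i \propto a_i$; normalizing recovers Eqn.~(\ref{eqn:pi_opt_e}). Equivalently, a Lagrange-multiplier argument sets $-a_i^2/\pi_i^2 + \mu = 0$, whence $\pi_i \propto a_i$, and convexity of $\pi \mapsto 1/\pi$ on $(0,\infty)$ confirms this critical point is the global minimizer.

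The derivation is essentially routine once Theorem~\ref{thm:beta_tilde-beta_hat} is in hand; the only place demanding care is the first step, namely correctly propagating the sandwich-form asymptotic variance through the linear transformation $\mbf{X}^T\mbf{X}$ (verifying the exact cancellation of the $(\mbf{X}^T\mbf{X})^{-1}$ factors) and justifying that passing to the EAMSE legitimately amounts to replacing $e_i^2$ by its expectation $(1-h_{ii})\sigma^2$ inside the trace.
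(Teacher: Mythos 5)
Your proposal is correct and follows essentially the route the paper intends: the paper omits the proof of this proposition, deferring to that of Proposition~\ref{thm:beta:pi1} (zero asymptotic bias, so the criterion reduces to the expected trace of the asymptotic variance, after which one minimizes over $\{\pi_i\}$ with $\sum_i\pi_i=1$ via Lagrange multipliers), and your Cauchy--Schwarz bound is an equivalent, slightly cleaner way to certify that the critical point is the global minimizer. One bookkeeping slip worth noting: you carry the $\sigma^2$ prefactor from the statement of Theorem~\ref{thm:beta_tilde-beta_hat} (which writes $AVar(\tilde{\sbf\beta})=\sigma^2\mbf{\Sigma}_c$ even though the $e_i^2$ already sit inside $\mbf{\Sigma}_c$) and then also substitute $\text{E}_{\mbf Y}(e_i^2)=(1-h_{ii})\sigma^2$, which would produce $\frac{\sigma^4}{r}\sum_{i=1}^n(1-h_{ii})\|\mbf x_i\|^2/\pi_i$ rather than Eqn.~(\ref{traceve}); the appendix derivation of Theorem~\ref{thm:beta_tilde-beta_hat} in fact yields variance $\mbf{\Sigma}_c$ with no extra $\sigma^2$, so the correct accounting drops one factor of $\sigma^2$---an inconsistency inherited from the paper's own theorem statement, and one that in any case leaves the minimizing probabilities in Eqn.~(\ref{eqn:pi_opt_e}) unchanged.
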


\textbf{Remark.}
All these proposed metrics can be computed in the time it takes to approximate leverage scores, i.e., the time to implement a random projection, using the algorithm of \cite{DMMW12_JMLR}, since they are essentially strongly related to leverage scores.  

As a summary, the six proposed estimators (IC, RL, PL, ICNLEV, RLNLEV, PLNLEV), along with three existing estimators (UNIF, BLEV/ALEV, SLEV) are presented in Table~\ref{tab:intro:summary}.  

\begin{table}[t]  
\begin{center}\footnotesize
\begin{tabular}{ c c c c c }
\hline
\hline
Estimator &
\specialcell{Sampling\\Probabilities}
&\specialcell{Criterion} 
& Results  \\
\hline
\vspace{2mm}
UNIF & $\pi_i = \frac 1n$  & $--$ &$--$ \\
\vspace{2mm}
BLEV/ALEV & $\pi_i = \frac{h_{ii}}{\sum_{i=1}^n h_{ii}}$  &  $--$&\citet{Drineas:06,DMMW12_JMLR} \\
\vspace{2mm}
SLEV & $\pi_i = \lambda\frac{h_{ii}}{\sum_{i=1}^n h_{ii}}+(1-\lambda)\frac{1}{n}$ & $--$&\citet{Ma:13,Ma:15} \\
\vspace{2mm}
IC &$\pi_i= \frac{\|(\mbf{X}^T\mbf{X})^{-1}\mbf{x}_i\|}{\sum_{i=1}^{n}\|(\mbf{X}^T\mbf{X})^{-1}\mbf{x}_i\|}$ & 
\footnotesize{$AMSE(\tilde{\sbf\beta}; {\sbf\beta}_{0})$}& 
Section~\ref{subsec:est:beta}, Eqn.~(\ref{eqn:pi_opt})\\
\vspace{2mm}
RL &$\pi_i= \frac{\sqrt{h_{ii}}}{\sum_{i=1}^{n}\sqrt{h_{ii}}}$ & 
\footnotesize{$AMSE(\mbf{X}\tilde{\sbf\beta}; \mbf{X}{\sbf\beta}_{0})$}& 
Section~\ref{subsec:est:beta}, Eqn.~(\ref{eqn:pi_sq_hii})\\
\vspace{2mm}
PL &$\pi_i= \frac{\|\mbf{x}_i\|}{\sum_{i=1}^{n}\|\mbf{x}_i\|}$  & 
\footnotesize{$AMSE(\sbf X^T\sbf X\tilde{\sbf\beta}; \sbf X^T\sbf X{\sbf\beta}_{0})$}& 
Section~\ref{subsec:est:beta}, Eqn.~(\ref{eqn:pi_pl})\\
\vspace{2mm}
ICNLEV &$\pi_i= \frac{\sqrt{1-h_{ii}}\|(\mbf{X}^T\mbf{X})^{-1}\mbf{x}_i\|}{\sum_{i=1}^{n}\sqrt{1-h_{ii}}\|(\mbf{X}^T\mbf{X})^{-1}\mbf{x}_i\|}$ &
\footnotesize{$EAMSE(\tilde{\sbf\beta}; \hat{\sbf\beta}_{OLS})$} & 
Section~\ref{subsec:approx:betaols}, Eqn.~(\ref{eqn:pi_ICNLEV})\\
\vspace{2mm}
RLNLEV &$\pi_i= \frac{\sqrt{(1-h_{ii})h_{ii}}}{\sum_{i=1}^{n}\sqrt{(1-h_{ii})h_{ii}}}$ &
\footnotesize{$EAMSE(\mbf{X}\tilde{\sbf\beta}; \mbf{X}\hat{\sbf\beta}_{OLS})$} & 
Section~\ref{subsec:approx:betaols}, Eqn.~(\ref{eqn:pi_ICNLEV:hrow})\\
\vspace{2mm}
PLNLEV &$\pi_i= \frac{\sqrt{1-h_{ii}}\|\mbf{x}_i\|}{\sum_{i=1}^{n}\sqrt{1-h_{ii}}\|\mbf{x}_i\|}$ &
\footnotesize{$EAMSE(\sbf X^T\sbf X\tilde{\sbf\beta};\sbf X^T\sbf X\hat{\sbf\beta}_{OLS})$}&
Section~\ref{subsec:approx:betaols}, Eqn.~(\ref{eqn:pi_opt_e})\\
\hline		
\end{tabular}
\caption{Summary of three existing sampling estimators (UNIF, BLEV, SLEV) and the six sampling estimators (IC, RL, PL, ICNLEV, RLNLEV, PLNLEV) presented in this paper. 
}
\label{tab:intro:summary}
\end{center}
\end{table}

\subsection{Relationship of the Sampling Estimators}
\label{subsec:relation}

Here, we study the relationships between the probability distributions given by IC, RL, PL, ICNLEV, RLNLEV, PLNLEV, and those given by SLEV and BLEV.

\subsubsection{ ``Shrinkage'' Properties of Proposed Estimators }

\begin{figure}[t] 
\centering
  \includegraphics[width=0.75\textwidth]{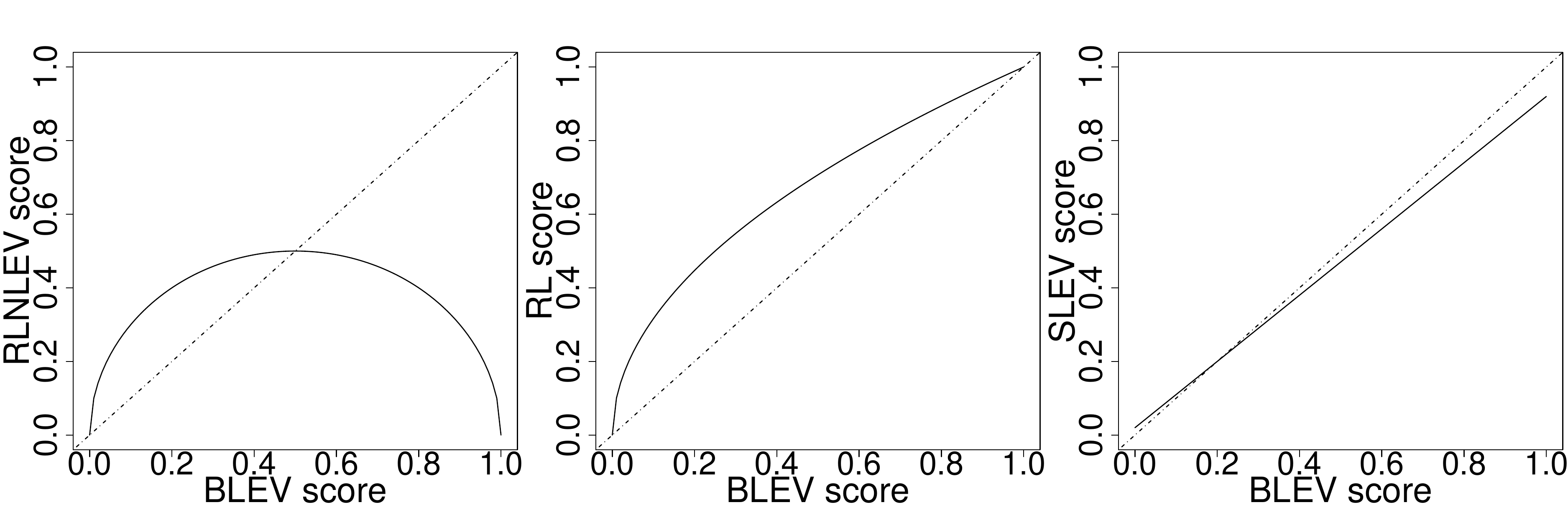}
\caption{Relationship between sampling methods.
Left panel: RLNLEV score ($\sqrt{(1-h_{ii})h_{ii}}$) versus BLEV score ($h_{ii}$). Middle panel: RL score ($\sqrt{h_{ii}} $) versus BLEV score ($h_{ii}$). Right panel: {SLEV score ($0.9h_{ii}+0.1p/n$, where $p/n=0.2$) versus BLEV score ($h_{ii}$).}}
\label{fig:OptimalShrinkage}
\end{figure}

We illustrate the ``shrinkage'' property of the proposed optimal sampling probabilities, compared to the BLEV sampling probabilities.  
For convenience,  we refer to the numerators of the sampling probabilities in a sampling estimators as the scores, e.g., the RL score is $\sqrt{h_{ii}}$ and the RLNLEV score is $\sqrt{(1-h_{ii})h_{ii}}$. 
In Figure~\ref{fig:OptimalShrinkage}, we plot 
the RL score, RLNLEV score, and SLEV score ($0.9h_{ii}+0.1p/n$ with $p/n=0.2$) as functions of the leverage score $h_{ii}$ (i.e., the BLEV score in Figure~\ref{fig:OptimalShrinkage}). 
Observe that the RLNLEV score  amplifies small $h_{ii}$s but shrinks large $h_{ii}$s.  
Both RLNLEV and RL scores provide nonlinear shrinkage of the BLEV. 
The SLEV scores also shrink large $h_{ii}$s and amplify small $h_{ii}$s, but in a linear fashion.

The advantage of such ``shrinkage'' is two-fold. 
On the one hand, the data with high leverage scores could be ``outliers.'' 
Shrinking the sampling probabilities of high leverage data points reduces the risk of selecting outliers into the sample. 
On the other hand, amplifying the sampling probabilities of low leverage data points reduces the variance of the resulting sampling estimators.

\subsubsection{The Role of $h_{ii}$s.}
On the one hand, if the $h_{ii}$s are homogeneous, 
then the sampling probabilities of the ICNLEV estimator ($\frac{\sqrt{1-h_{ii}}\|(\mbf{X}^T\mbf{X})^{-1}\mbf{x}_i\|}{\sum_{i=1}^{n}\sqrt{1-h_{ii}}\|(\mbf{X}^T\mbf{X})^{-1}\mbf{x}_i\|}$) and those of the IC estimator ($\frac{\|(\mbf{X}^T\mbf{X})^{-1}\mbf{x}_i\|}{\sum_{i=1}^{n}\|(\mbf{X}^T\mbf{X})^{-1}\mbf{x}_i\|}$) will be similar to each other. 
On the other hand, since $\sum_{i=1}^nh_{ii}=p$, given a fixed value of $p$, we expect that $h_{ii}$s are small when sample size $n$ is large. 
When $h_{ii}=o(1)$ for all $i=1, \ldots, n$, i.e.,  $h_{ii}$s are extremely small compared to 1, the sampling probabilities of the ICNLEV estimator and those of the IC estimator will also be similar. 
Analogous arguments also apply to PLNLEV and PL.

\subsubsection{Two Examples. }
\label{subsubsec:example}
We now use two examples to illustrate the relationship between the sampling probabilities in various sampling estimators.

\begin{figure}[H] 
    \centering
    \begin{subfigure}[!ht]{0.9\textwidth}
        \centering
       \includegraphics[width=0.27\textwidth]{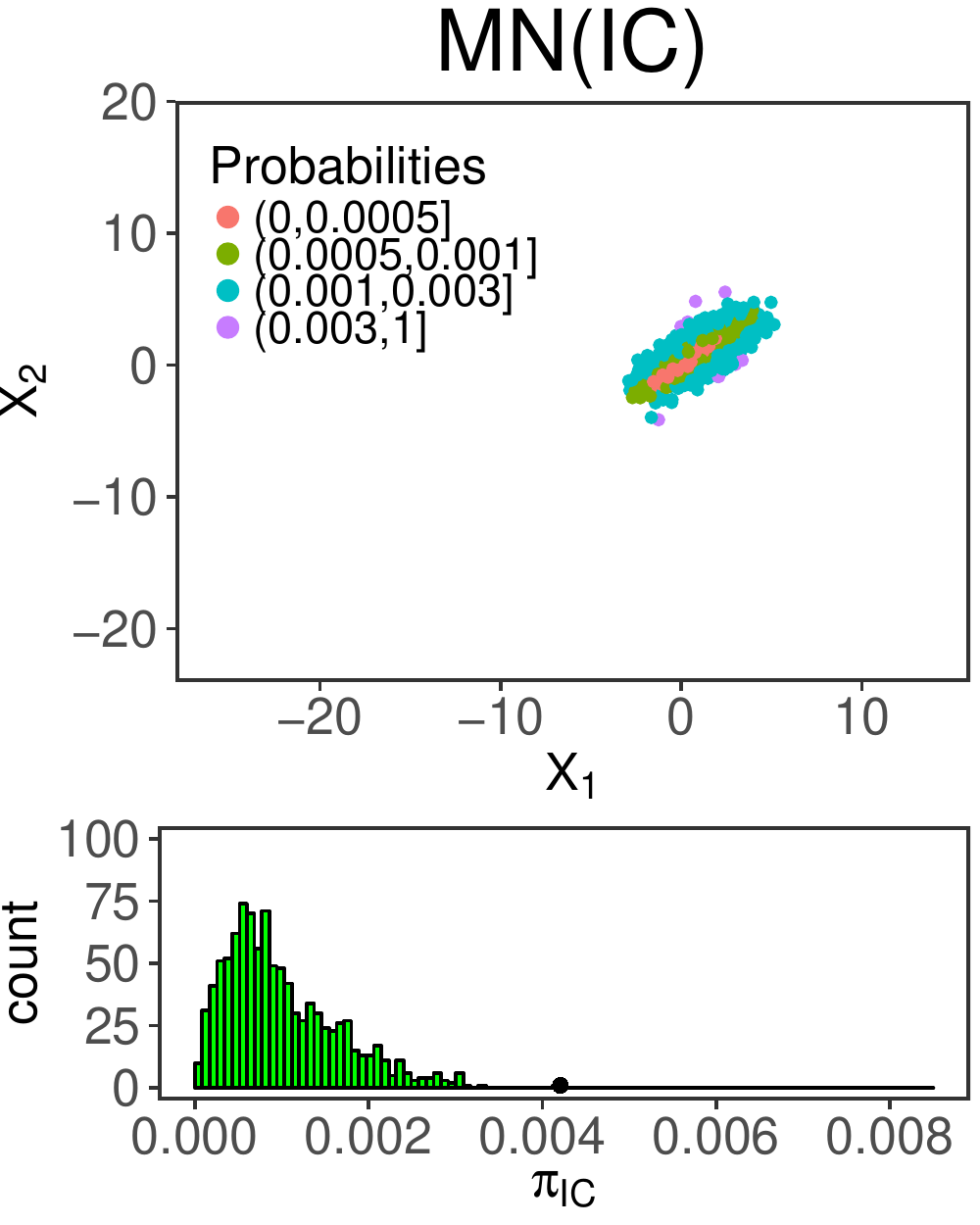} 
       \includegraphics[width=0.27\textwidth]{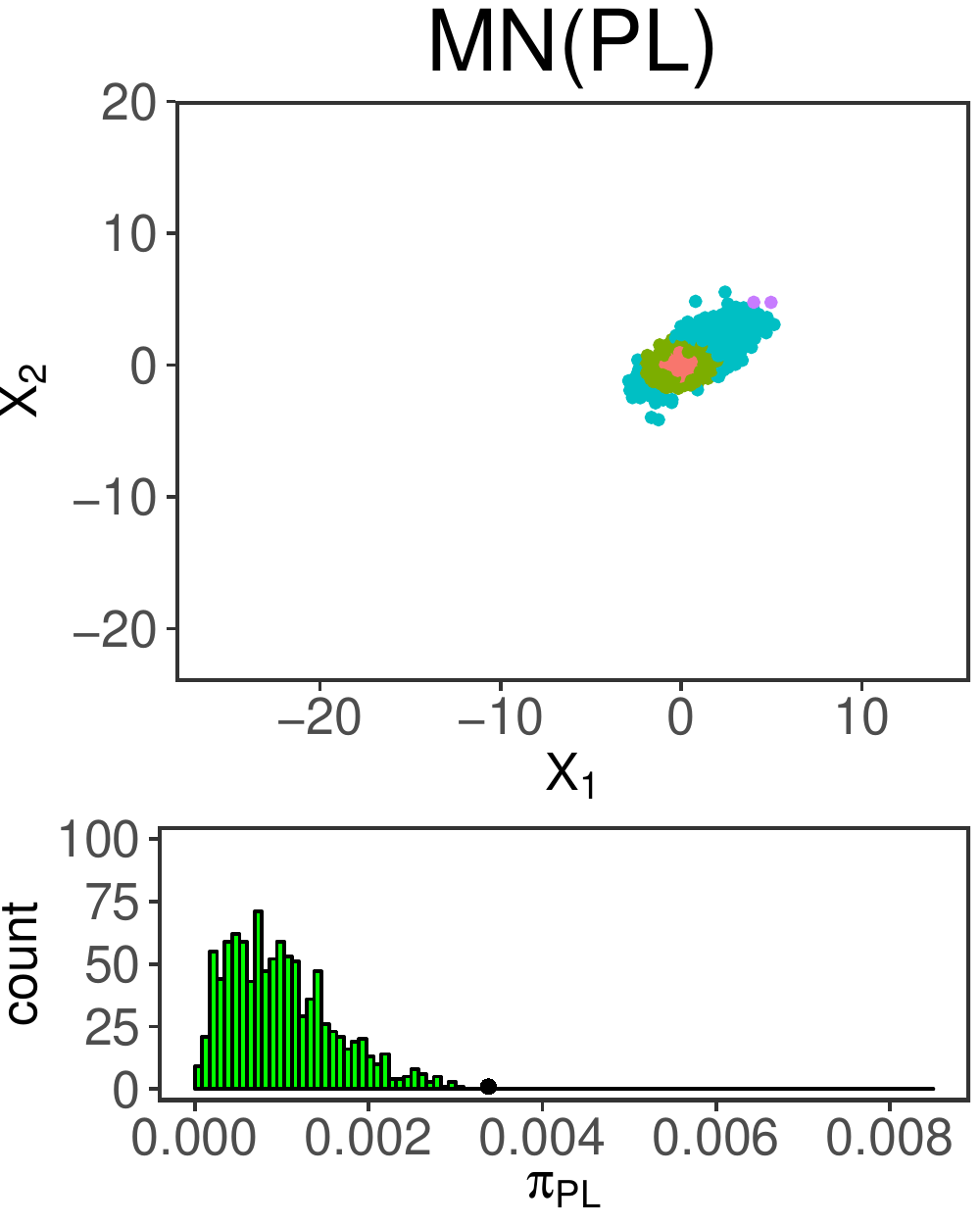}
       \includegraphics[width=0.27\textwidth]{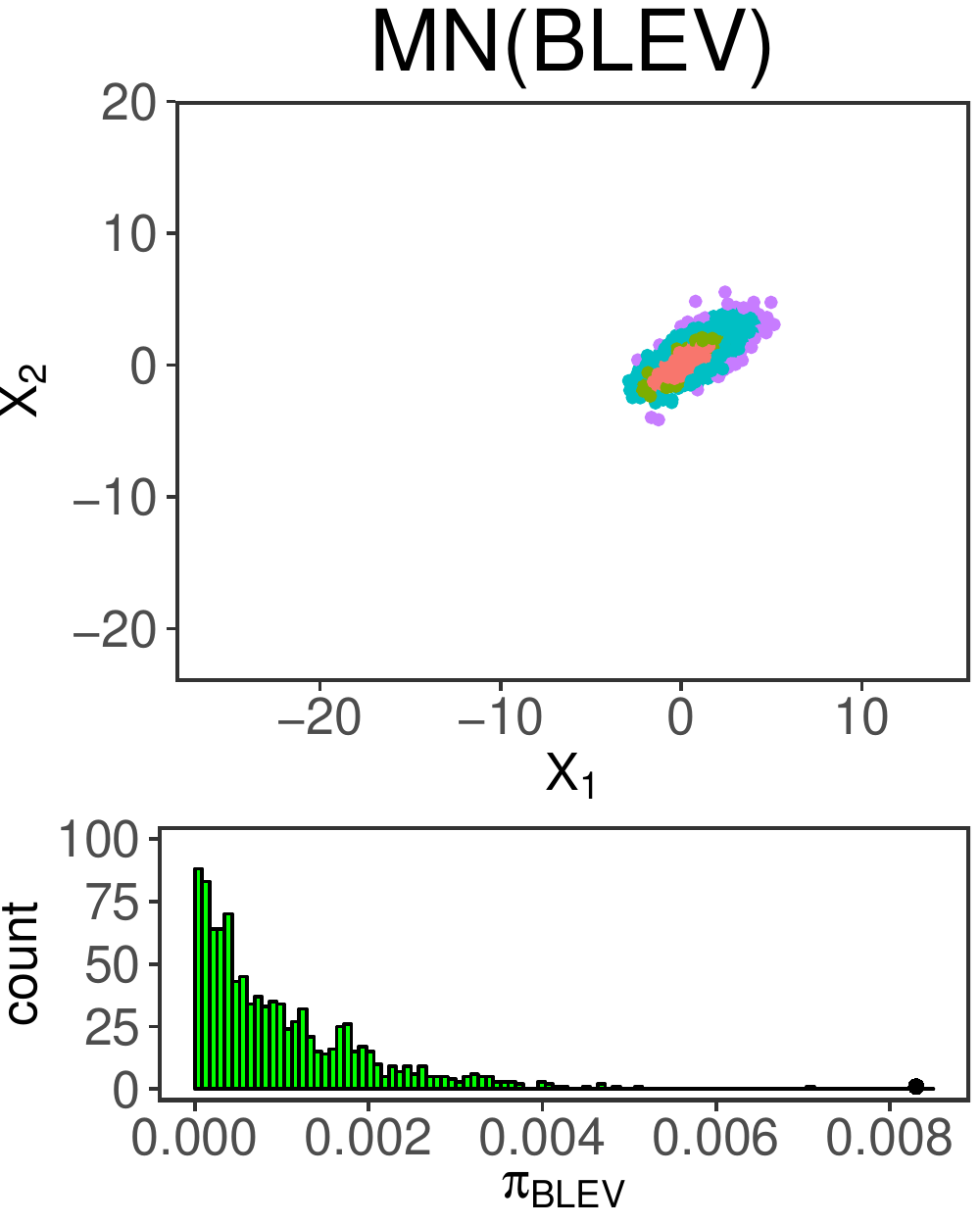}
        \caption{\footnotesize{Scatter plots (first row) of data points generated from a bivariate normal distribution with colors coding the sampling probability in the IC (left panel), PL (middle panel), and BLEV (right panel). Below each scatter plot is the histogram of the corresponding sampling probabilities, with the dot representing the maximum probability. } }
    \end{subfigure}%
    \\
    \begin{subfigure}[!ht]{0.9\textwidth}
        \centering
       \includegraphics[width=0.27\textwidth]{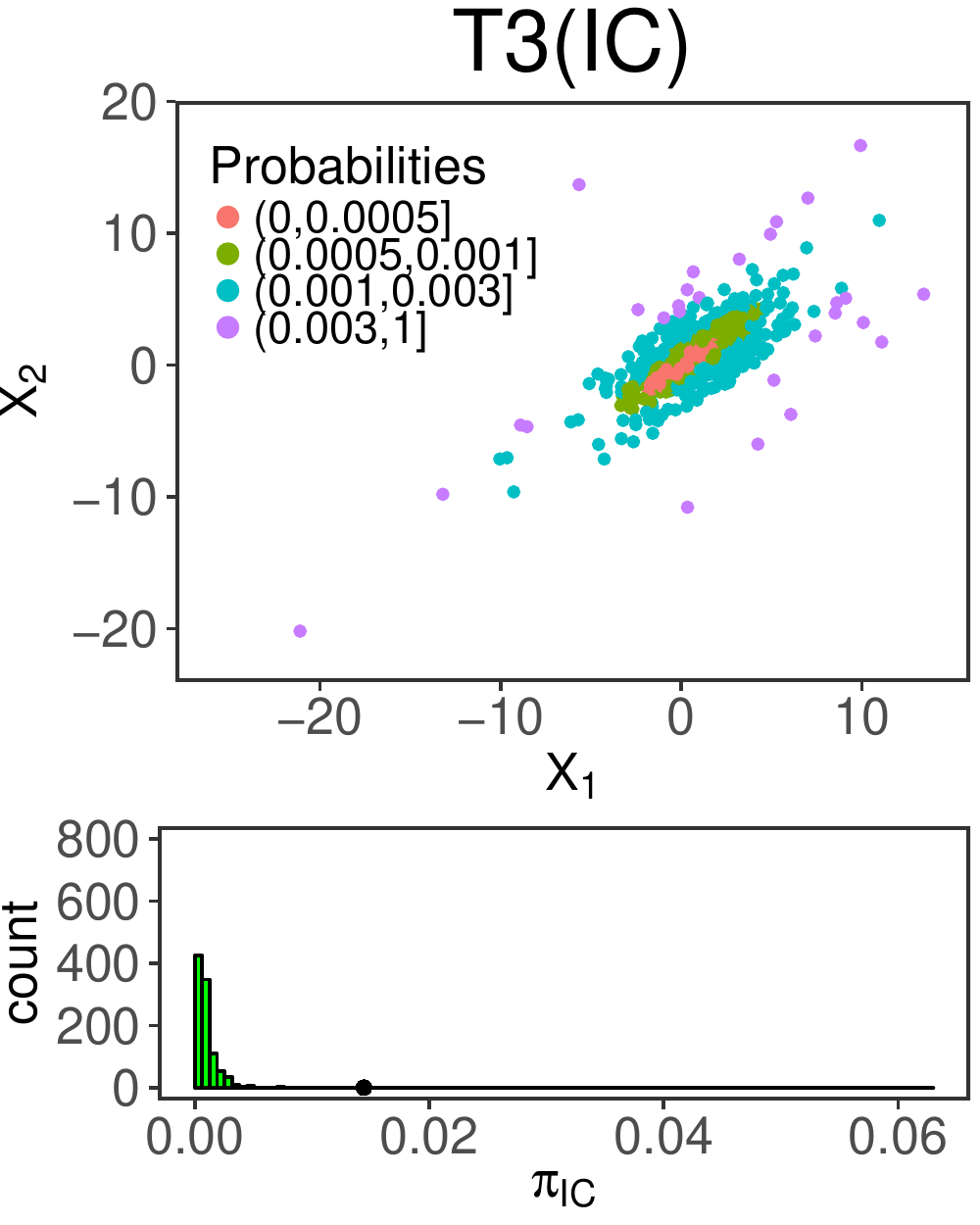}
       \includegraphics[width=0.27\textwidth]{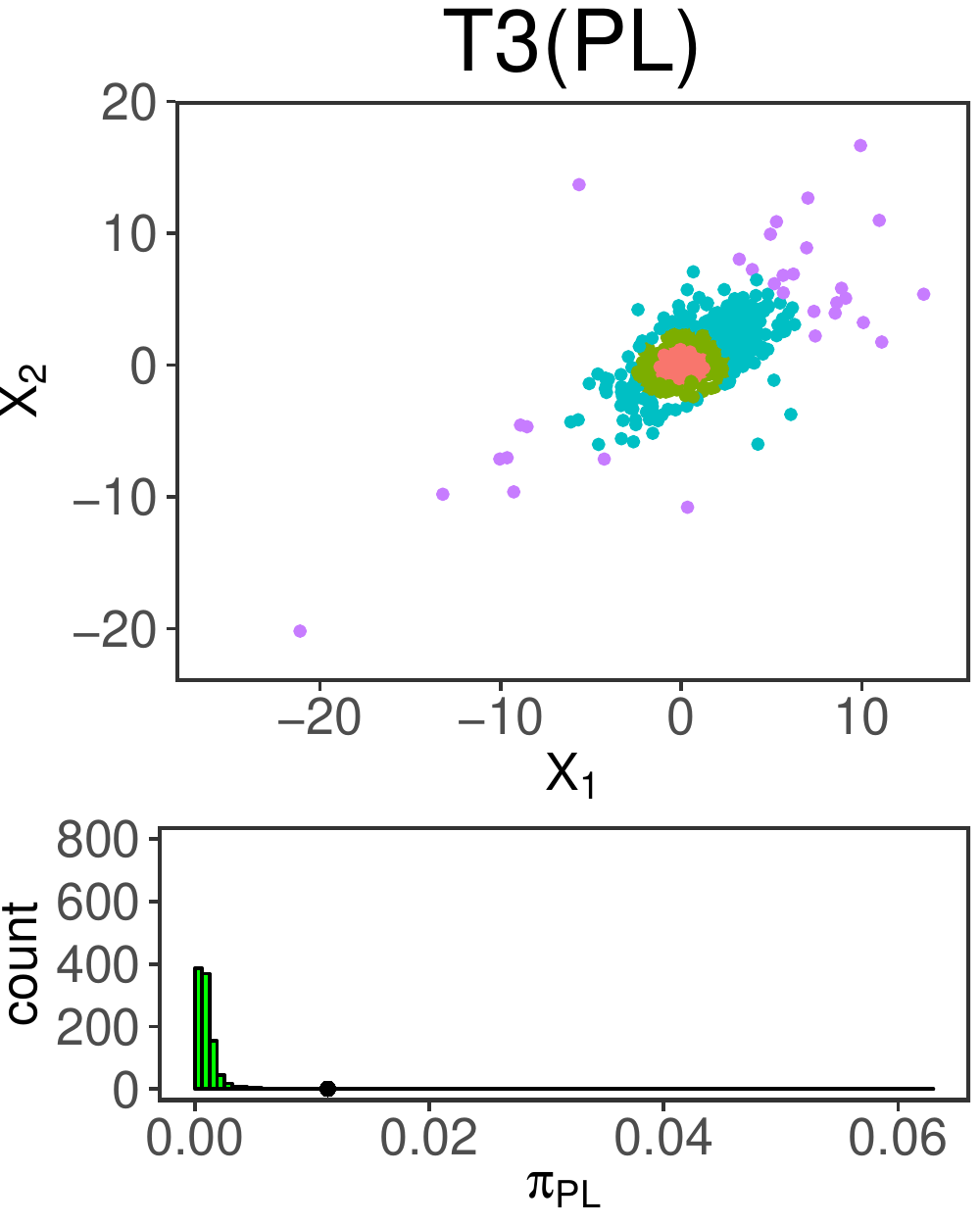}
       \includegraphics[width=0.27\textwidth]{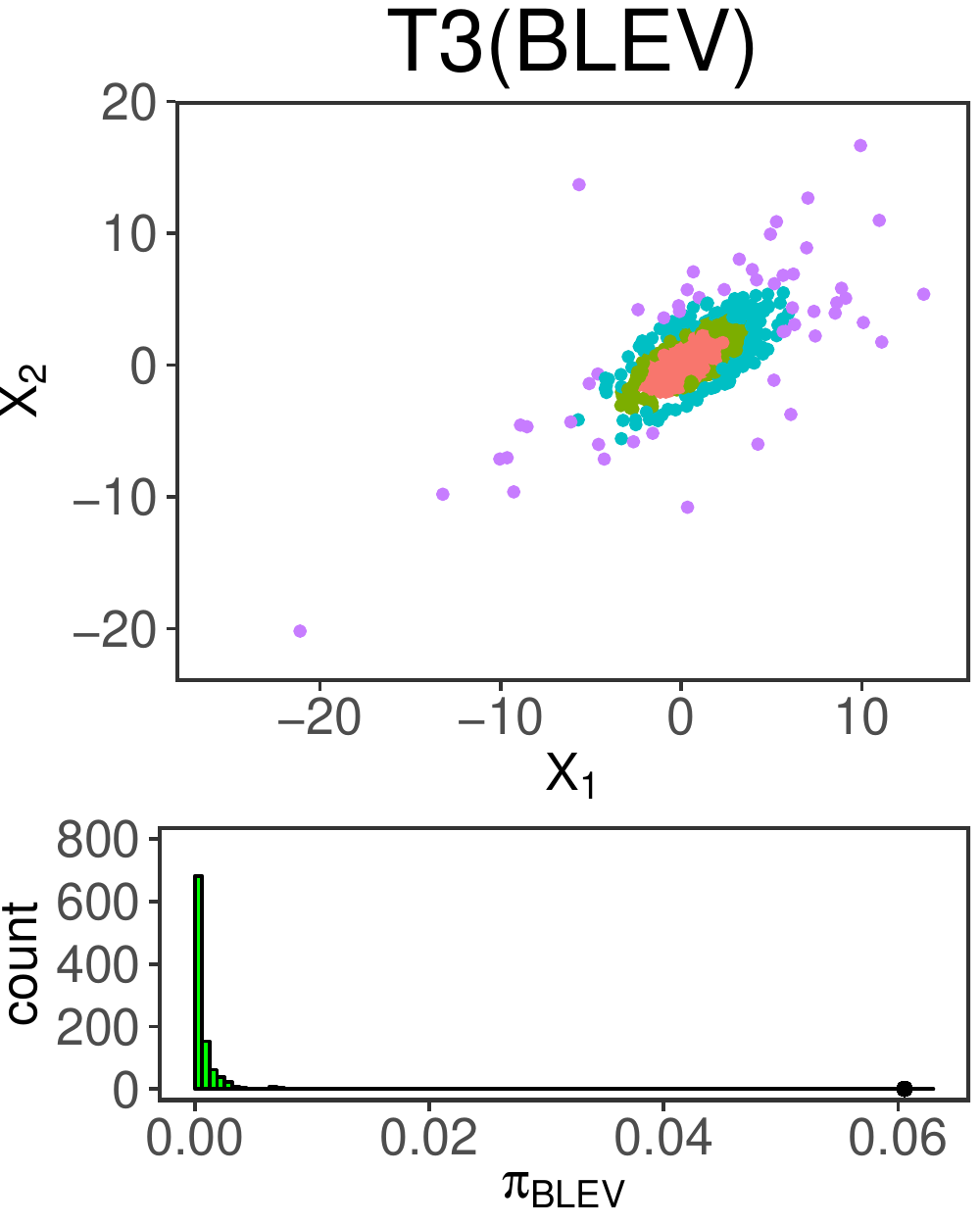}
        \caption{\footnotesize{Same as in (a), except that the data points are generated from a bivariate noncentral $t$ distribution with three degrees of freedom. } }
    \end{subfigure}
    \\
     \begin{subfigure}[!ht]{0.9\textwidth}
        \centering
        \includegraphics[width=0.27\textwidth]{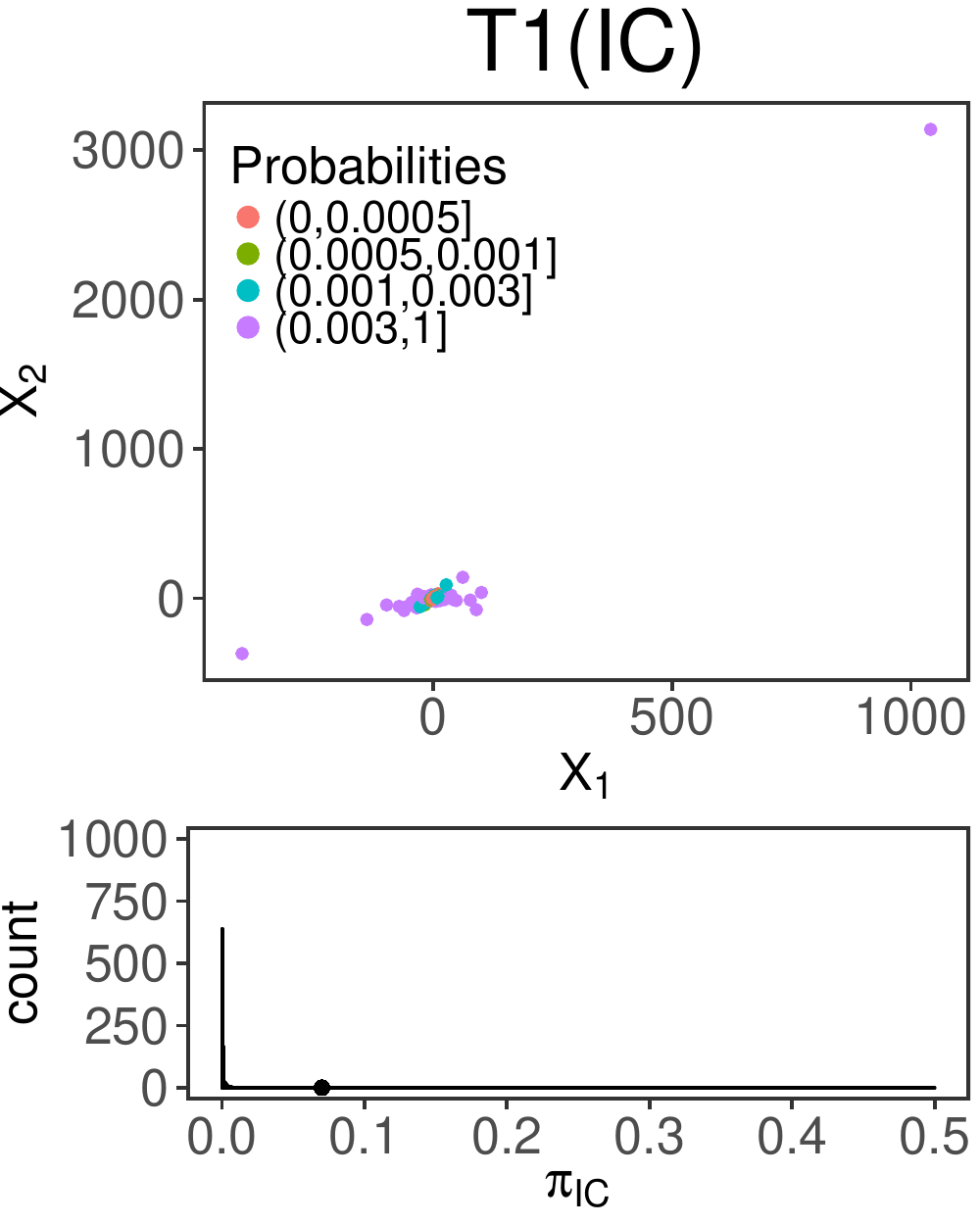}
        \includegraphics[width=0.27\textwidth]{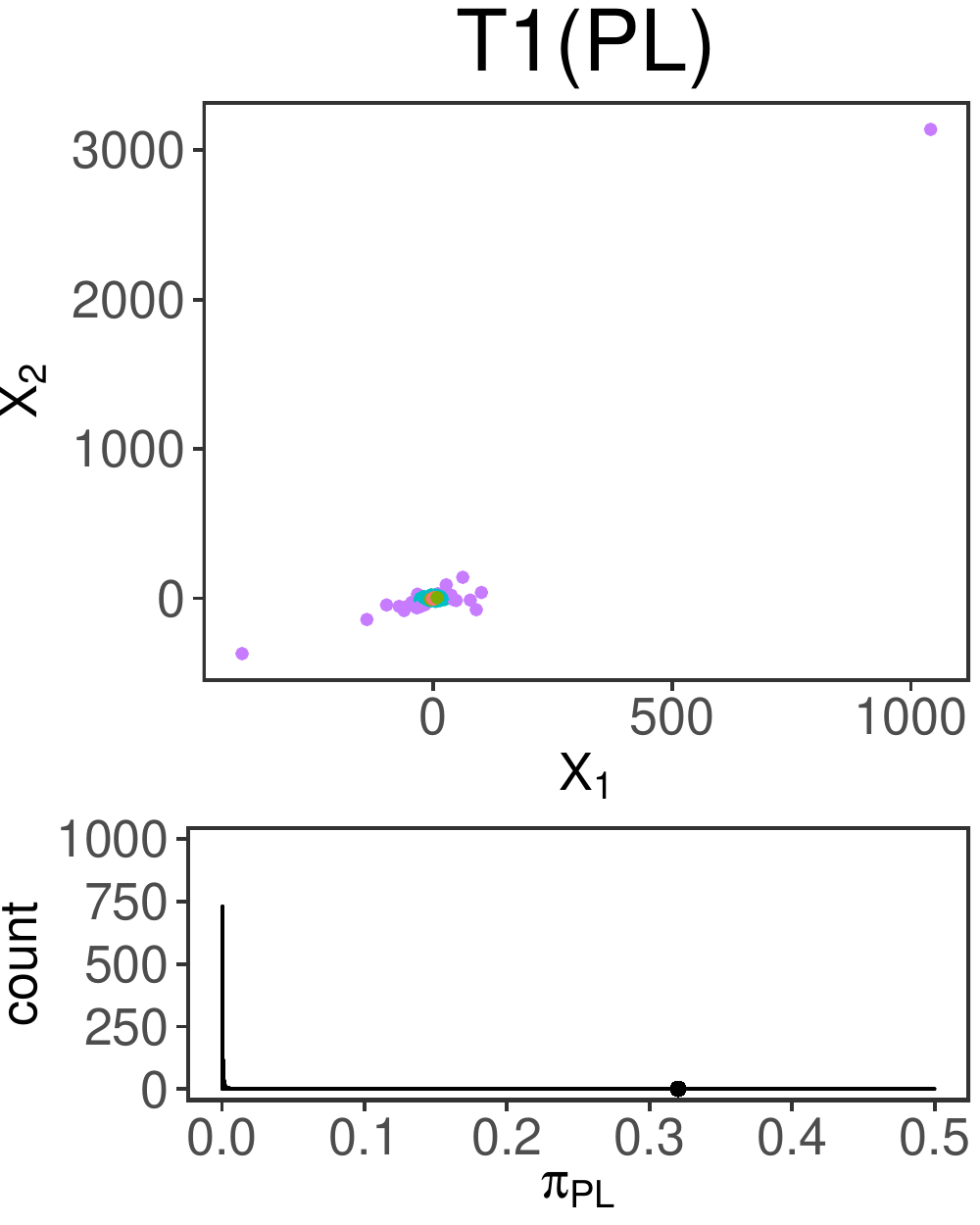}
        \includegraphics[width=0.27\textwidth]{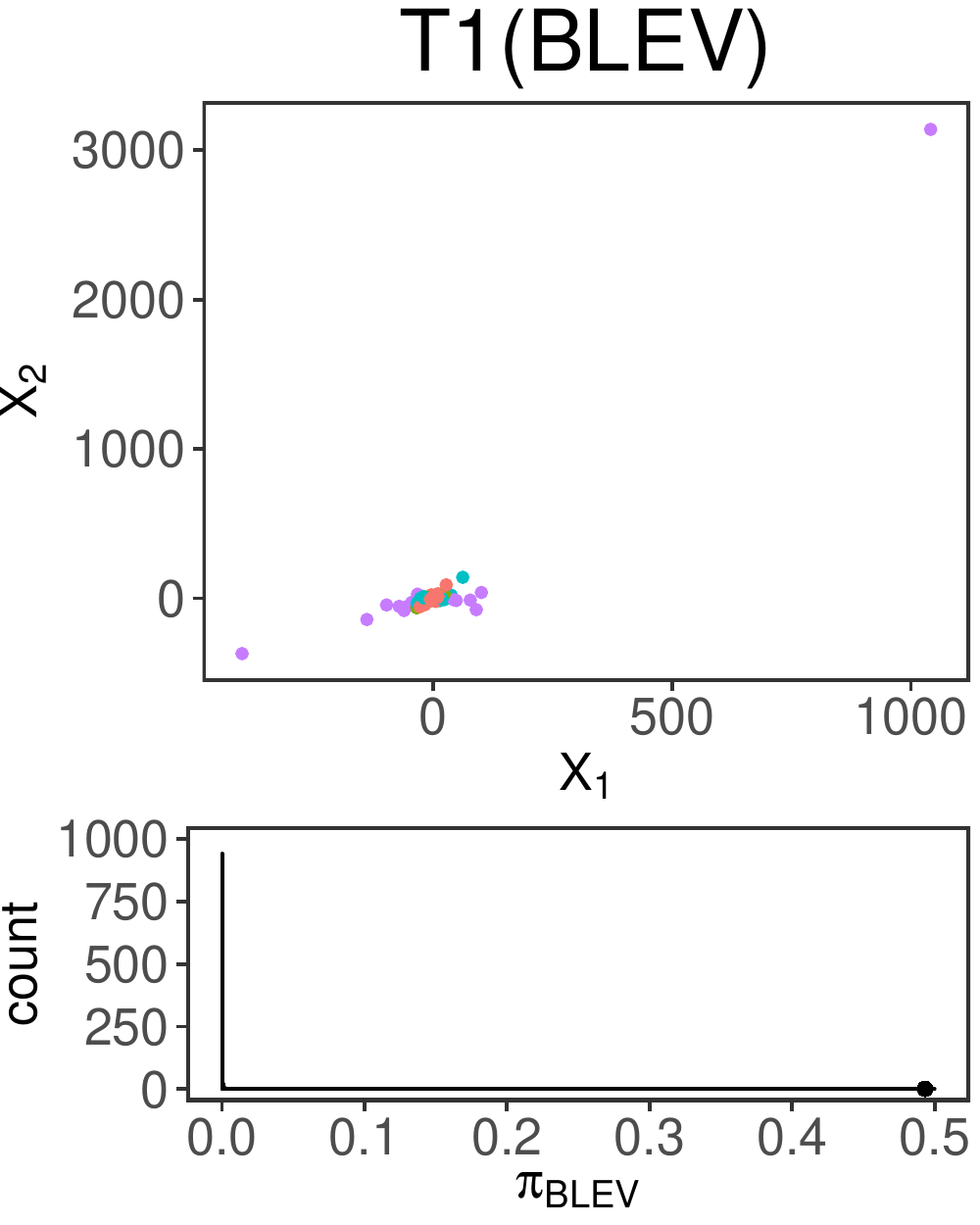}
        \caption{\footnotesize{Same as in (a), except that the data points are generated from a bivariate noncentral $t$ distribution with one degree of freedom.  } }
    \end{subfigure}
    \caption{\footnotesize{Scatter plots of 1000 data points generated from three distributions in Example 2 in Section~\ref{subsubsec:example} and the histograms of sampling probabilities. }}
    \label{fig:simu:sampling}
\end{figure}

{\bf Example 1: Orthogonal predictor matrix, i.e., $\mbf{X}^T\mbf{X}=\mbf{I}$}. 
Consider a linear regression model with an orthogonal predictor matrix, i.e., $\mbf{X}^T\mbf{X}=\mbf{I}$. 
In this case, we have $h_{ii}= \mbf{x}_i^T(\mbf{X}^T\mbf{X})^{-1}\mbf{x}_i =\|\mbf{x}_i\|^2$. 
Further, the ICNLEV score, RLNLEV score, and PLNLEV score are the same and equal $\sqrt{(1-h_{ii})h_{ii}}$. 
Analogously, the IC score coincides with the RL score and the PL score, and all equal $\|\mbf x_i\|$.
 
{\bf Example 2: A two dimensional example}. 
Consider also a toy example of a linear regression model with $p=2$ correlated predictors. 
We generated 1000 data points for two predictors from a multivariate normal distribution, a multivariate noncentral $t$ distribution with three degrees of freedom, and a multivariate noncentral $t$ distribution with one degree of freedom.  
In Figure~\ref{fig:simu:sampling}, we present scatter plots of these data points. 
In each scatter plot, the color of points indicates the magnitude of sampling probabilities in IC, PL and BLEV methods. Below each scatter plot, we also present histograms of the corresponding sampling probabilities. 
Examination of Figure~\ref{fig:simu:sampling} reveals one pattern shared by all sampling distributions, i.e., 
the sampling probabilities of data points in the center are smaller than those of data points at the boundary. 
In addition, note that, compared to 
$\pi_{i}^{PL}\propto \|\mbf{x}_i\|$,
both 
$\pi_{i}^{IC} \propto\|(\mbf{X}^T\mbf{X})^{-1}\mbf{x}_i\|$ 
and 
$\pi_{i}^{BLEV}\propto \mbf x_i^T(\mbf{X}^T\mbf{X})^{-1}\mbf{x}_i$ depend on $(\mbf{X}^T\mbf{X})^{-1}$, which normalizes the scale of the predictors. 
Thus, we notice that data points with high probabilities in PL scatter around the upper right and lower left corner.
However, the data points with high probabilities in IC and BLEV form a contour toward the exterior of the data cloud. 
This difference is caused by the effect of the normalization using $(\mbf{X}^T\mbf{X})^{-1}$. 
The histograms in each row also show the key difference between the sampling probabilities of BLEV and those of IC and PL, i.e., the sampling probability distribution of BLEV is more dispersed than others. 
In other words, there are a significant number of data points with either extremely large or extremely small probabilities in BLEV. 
This phenomenon is also observed in Figure~\ref{fig:prob} in Section~\ref{sec:simu-and-real}.

\section{Empirical Results}
\label{sec:simu-and-real}

In this section, we present a summary of the main results of our empirical analysis, which consisted of an extensive analyses on simulated and real datasets.

\subsection{Simulation Setting}
\label{simu:setting}

We generated synthetic data from Model~(\ref{linreg-matrix})  
with $p=10$, $n=5000$, and random error $\varepsilon_i \stackrel{iid}{\sim} N(0,1)$. 
We set the first and last two entries of $\sbf{\beta}_0$ to be $1$ and the rest to be $0.1$. 
We generated the predictors from the following distributions.
\begin{itemize}
\item 
Multivariate normal distribution $\textbf{N}(\sbf{1}, \mbf{D})$, where $\sbf{1}$ is a $p\times 1$ column vector of $1$s, and the $(i,j)^{th}$ element of $\mathbf{D}$ is set to $1\times 0.7^{|i-j|}$, for $i,j=1,\ldots,p$. 
We refer to this as \underline{MN} data. 
\item 
Multivariate noncentral $t$-distribution with 3 degrees of freedom, noncentrality parameter $\sbf{1}$, and scale matrix $\mbf{D}$, i.e., $t_3(\sbf{1}, \mbf{D})$. 
We refer to this as \underline{T3} data. 
\item 
Log-normal distribution $\textbf{LN}(\sbf{1},\mathbf{D})$. 
We refer to this as \underline{LN} data. 
\item 
Multivariate noncentral $t$-distribution with 1 degree of freedom, noncentrality parameter $\sbf{1}$, and scale matrix $\mbf{D}$, i.e., $t_1(\sbf{1}, \mbf{D})$. 
We refer to this as \underline{T1} data. 
\end{itemize}
For  
$t_1(\sbf{1}, \mbf{D})$, the expectation and variance do not exist. 
This violates Condition (A1) in Theorem~\ref{thm:beta_tilde-beta}. 
Thus, the asymptotic squared bias and asymptotic variance of the proposed estimators might not converge quickly to 0, as $r$~increases.

\begin{figure}[t] 
\centering
\includegraphics[scale=0.5]{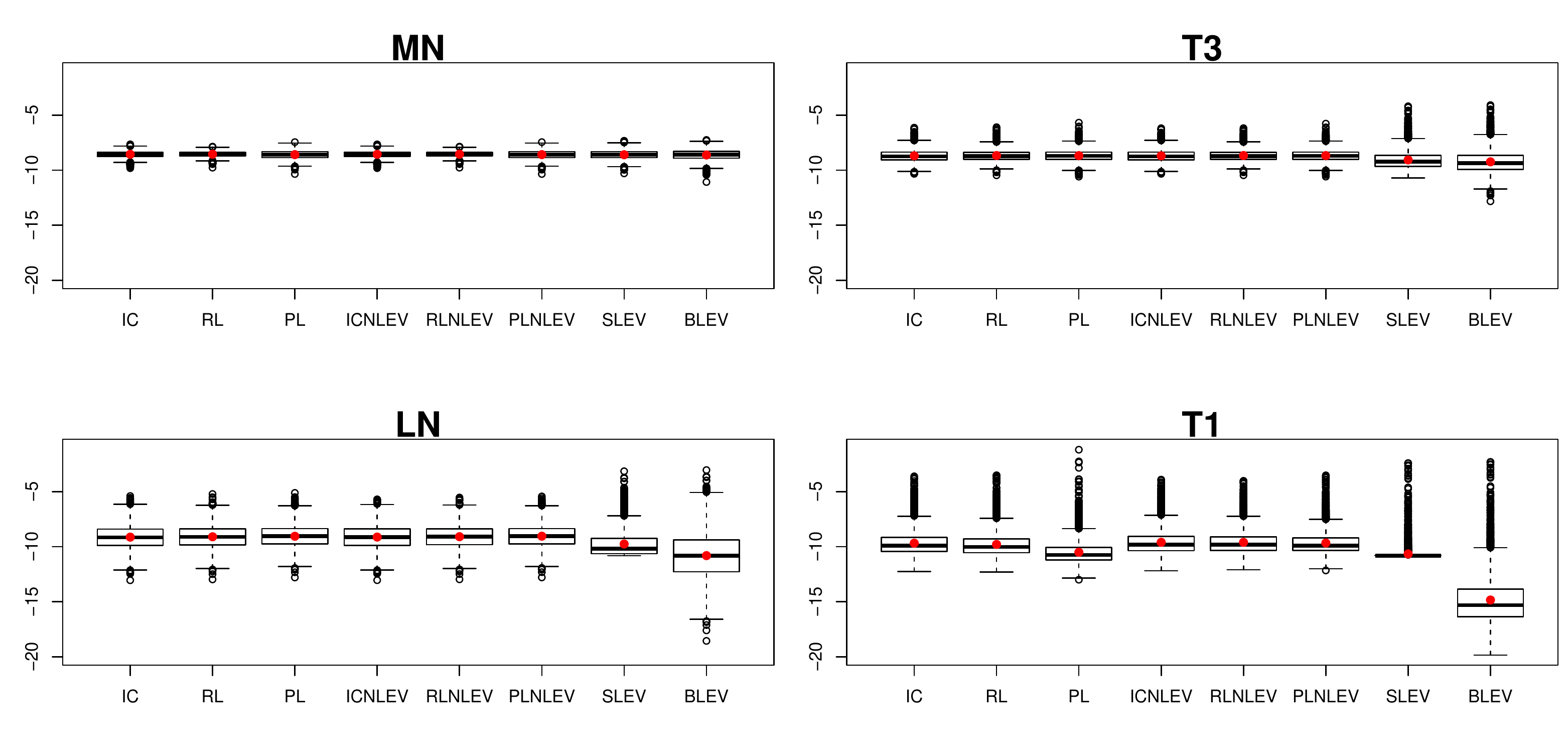}
\caption{Box plots of the sampling probabilities (in log scale) of all data points for IC, RL, PL, ICNLEV, RLNLEV, PLNLEV, SLEV, and BLEV (from left to right in each panel) for \underline{MN}, \underline{T3}, \underline{LN}, and \underline{T1} data, for $p$=10 and $n=$5000. In each box plot, the dot inside the box indicates the mean of corresponding sampling probabilities (in log scale).}
\label{fig:prob}
\end{figure}

In Figure~\ref{fig:prob}, we present box plots of the sampling probabilities (in log scale) of all the data points in  IC, RL, PL, ICNLEV, RLNLEV, PLNLEV, SLEV, and BLEV (from left to right) for \underline{MN}, \underline{T3}, \underline{LN}, and \underline{T1}. 
The sampling probability distributions of BLEV are more dispersive than those of other estimators. There exist a significant number of extremely small sampling probabilities in BLEV, especially when the data distribution has heavier tails, such as is the case for \underline{LN} and \underline{T1}. 
These extremely small sampling probabilities in BLEV are effectively mitigated in SLEV.  
However, the medians of the sampling probabilities in SLEV are still smaller than the first quartiles of the sampling probabilities in ICNLEV, IC, PLNLEV, and PL in \underline{T3}, \underline{LN}, and \underline{T1}.
The relatively small sampling probabilities in BLEV and SLEV will inflate the variance of the sampling estimators (recall the expression for the asymptotic variances in Theorems~\ref{thm:beta_tilde-beta} and~\ref{thm:beta_tilde-beta_hat}). 
Thus, it is expected that BLEV and SLEV will give rise to estimates with relatively large variances, especially when data were generated from more heavy-tailed distributions, e.g., \underline{LN} and \underline{T1}.

\subsection{Sampling Estimators for Estimating Model Parameters}
\label{sec:simu:est:bt}

Here, we evaluate the performance of the proposed sampling estimators in estimating $\sbf\beta_0$, $\sbf X\sbf\beta_0$, and $\sbf X^T\sbf X\sbf\beta_0$. 
Under the simulation settings of Section~\ref{simu:setting}, we generated 100 replicates of \underline{MN}, \underline{T3}, \underline{LN}, and \underline{T1} data. 
We applied IC, RL, PL, SLEV (with $\lambda=0.9$ here and after), and BLEV to each replicated dataset to obtain sampling estimates at sample sizes $r= 100,  200,  500,  700, 1000$. 
Then, we calculated the squared bias and variance for each~method. 

\begin{figure}[t] 
\centering
\includegraphics[scale=0.5]{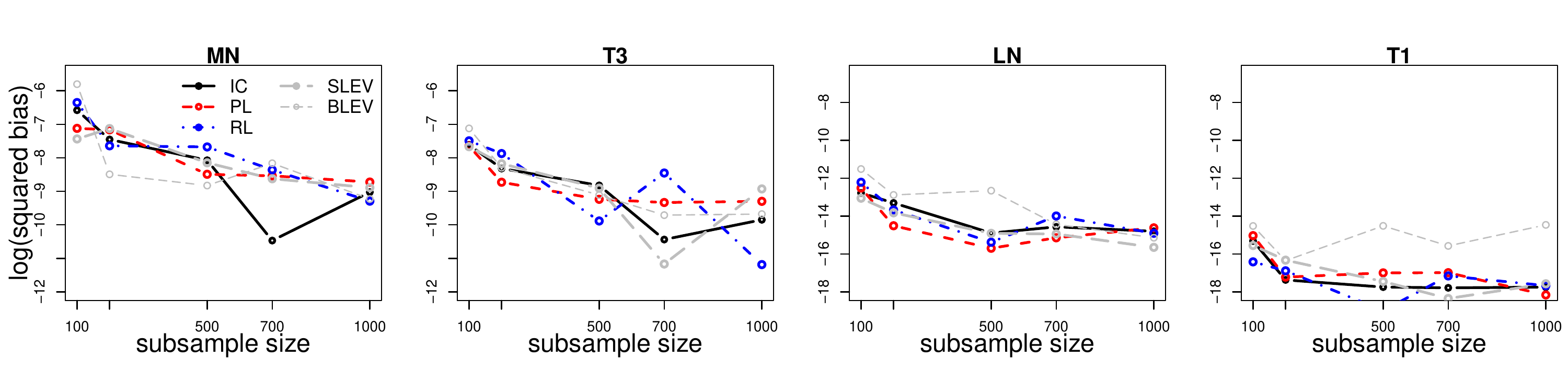}
\includegraphics[scale=0.5]{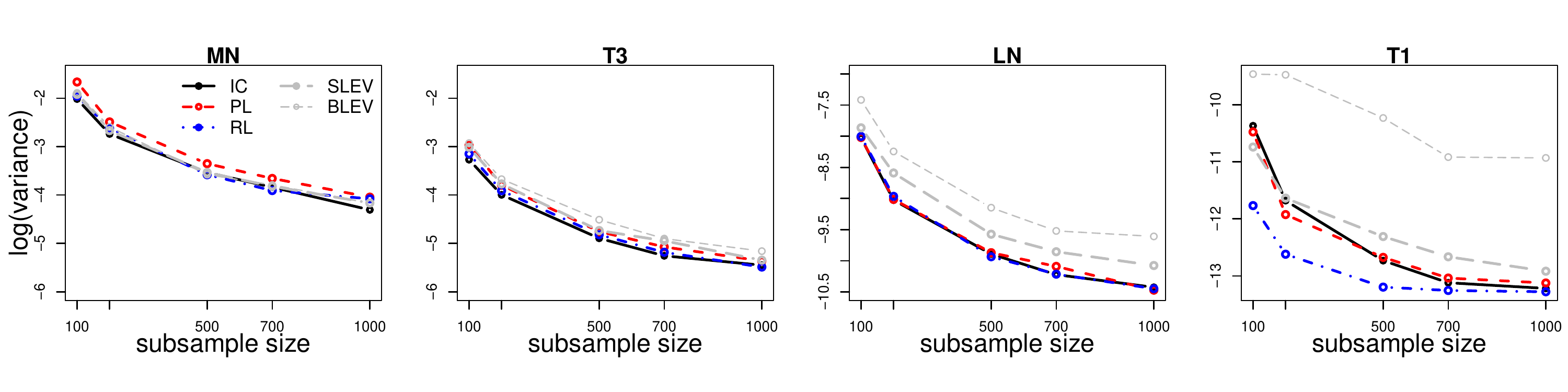}
\caption{Squared biases (first row) and variances (second row) of IC, RL, PL, SLEV, and BLEV estimates in estimating $\sbf\beta_0$ (in log scale) at different sample sizes. }
\label{fig:uncond:bias:var:bt}
\end{figure}

In Figure~\ref{fig:uncond:bias:var:bt}, we plot the squared biases (first row) and the variances (second row) (in log scale) for IC, RL, PL, SLEV, and BLEV estimates in estimating $\sbf\beta_0$ in \underline{MN}, \underline{T3}, \underline{LN}, and \underline{T1}.
First, both the squared biases and the variances show decreasing patterns as $r$ increases. 
The squared biases of different methods are similar to each other and are much smaller than the corresponding variances. 
These observations are expected, since Theorem~\ref{thm:beta_tilde-beta} states that the RandNLA estimators are asymptotically unbiased and consistent estimators of $\sbf\beta_0$. 
Second, the variances of estimates using IC, whose sampling probabilities minimize $AMSE(\tilde{\sbf\beta}; {\sbf\beta}_{0})$, are slightly smaller than the variances of estimates using other methods in \underline{MN} and \underline{T3}, at most sample sizes. 
The variances of estimates using IC, RL, and PL are all smaller than those of BLEV and SLEV estimates in \underline{T3}.
As mentioned in the discussion of Figure~\ref{fig:prob}, the larger variances of BLEV estimates are caused by the extremely small sampling probabilities in BLEV. 
Taking a weighted average of the sampling probability distribution of BLEV and that of UNIF shows a beneficial effect on the variances for SLEV estimators.
However, the variances of SLEV estimators are still larger than those of IC in \underline{T3}, \underline{LN}, and \underline{T1} at larger sample sizes. 
Third, for \underline{T1}, despite the violation of the regularity condition in Theorem~\ref{thm:beta_tilde-beta}, our proposed estimators IC, RL, and PL still outperform BLEV and SLEV in terms of variances, when sample size is greater than 200. 
Fourth, the squared biases and variances of all estimates get smaller from left panels to right panels.

For estimating $\mbf{Y}$ and $\mbf{X}^T\mbf{X}\sbf{\beta}_{0}$, the biases of all sampling estimators are very similar to each other and are much smaller than the corresponding variances. 
This observation is consistent with what we observed in 
estimating $\sbf{\beta}_{0}$ in Figure~\ref{fig:uncond:bias:var:bt}. We thus only present the variances of  IC, RL, PL, SLEV, and BLEV estimates in estimating $\mbf{Y}$ and $\mbf{X}^T\mbf{X}\sbf{\beta}_{0}$ at different sample sizes in Figure~\ref{fig:uncond:bias:var:yhat} and Figure~\ref{fig:uncond:bias:var:Lbt}. 
As shown, the variances of the estimates for estimating both $\mbf{Y}$ and $\mbf{X}^T\mbf{X}\sbf{\beta}_{0}$, using PL, IC, and RL, are smaller than the variances of estimates using BLEV and SLEV in \underline{T3} and \underline{LN}, at most sample sizes.

\begin{figure}[t] 
\centering
\includegraphics[scale=0.5]{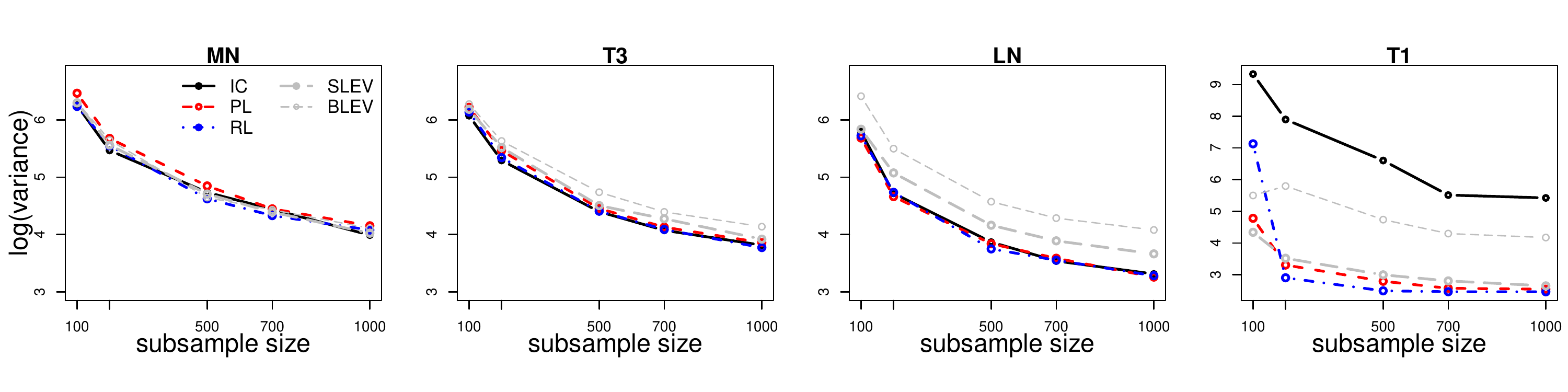}
\caption{The variances of  IC, RL, PL, SLEV, and BLEV estimates in predicting $\sbf{Y}$ (in log scale) at different sample sizes. }
\label{fig:uncond:bias:var:yhat}
\end{figure}

\begin{figure}[t] 
\centering
\includegraphics[scale=0.5]{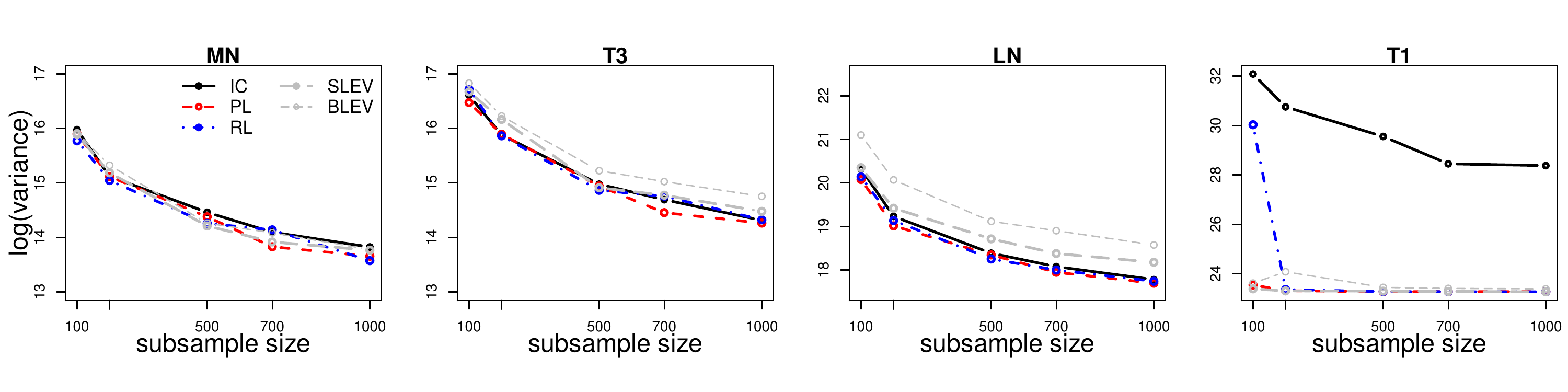}
\caption{The variances of  IC, RL, PL, SLEV, and BLEV estimates in estimating $\mbf{X}^T\mbf{X}\sbf{\beta}_{0}$ (in log scale) at different sample sizes. }
\label{fig:uncond:bias:var:Lbt}
\end{figure}

\subsection{Sampling Estimators for Approximating the Full Sample OLS Estimate}
\label{sec:simu:app:betols}

Here, we evaluate the performance of the proposed sampling estimators for approximating $\hat{\sbf\beta}_{OLS}$, $\sbf X \hat{\sbf\beta}_{OLS}$, and $\sbf X^T \sbf X \hat{\sbf\beta}_{OLS}$. 
Under the simulation settings of Section~\ref{simu:setting}, we generated four datasets without replicates from \underline{MN}, \underline{T3}, \underline{LN}, and \underline{T1}, respectively. 
For each dataset, the full sample OLS estimate was calculated. 
We set samples sizes at $r=100, 200, 500, 700, 1000$. 
We repeatedly applied ICNLEV, RLNLEV, PLNLEV, SLEV, and BLEV methods $100$ times at each sample size to get sampling estimates $\tilde{\sbf{\beta}}_b$, where $b=1,\ldots, 100$. 
Using these estimates, we calculated the squared bias and variance for each method for approximating $\hat{\sbf{\beta}}_{OLS}$. 

\begin{figure}[t] 
\centering
\includegraphics[scale=0.5]{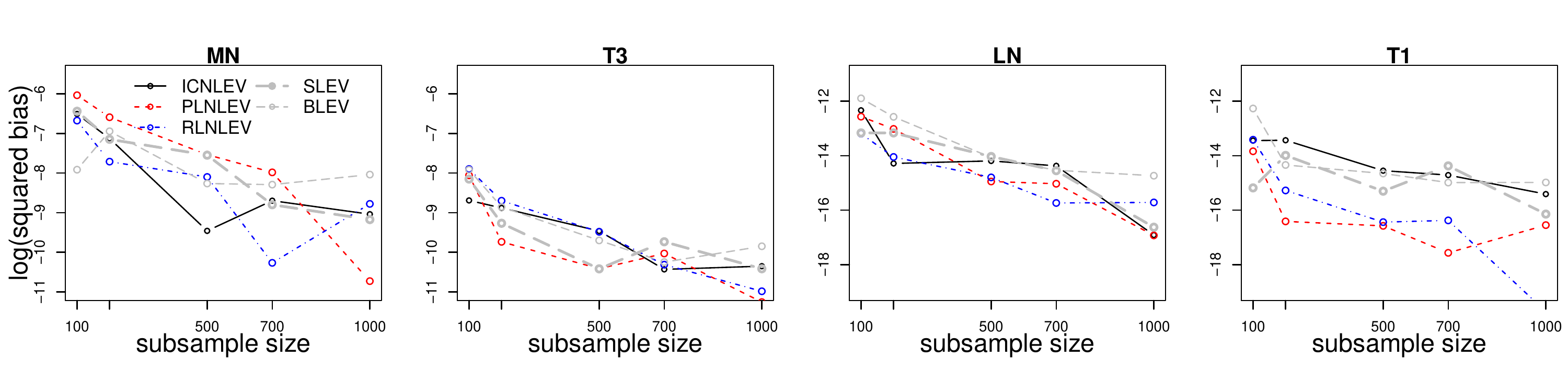}
\includegraphics[scale=0.5]{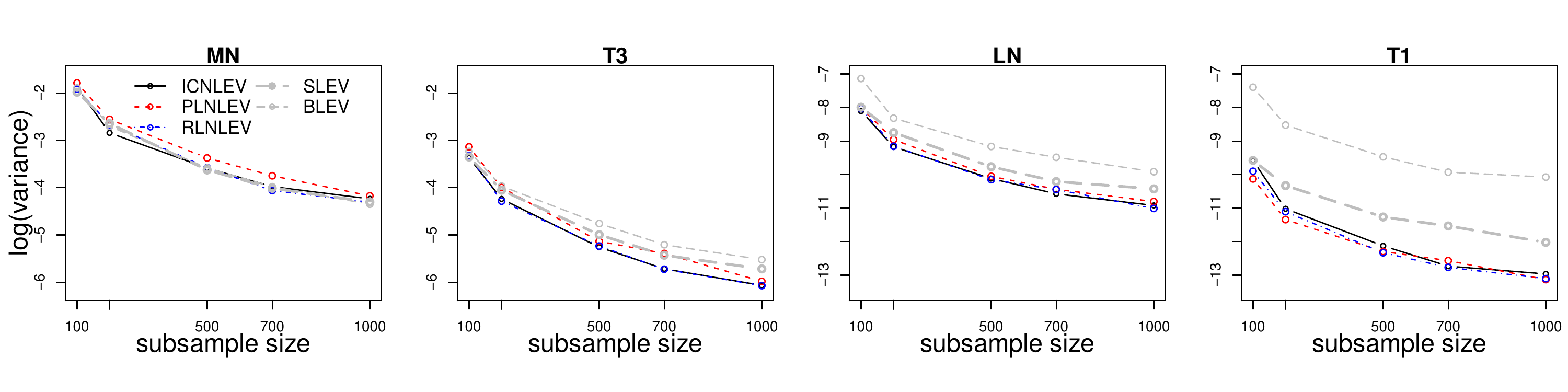}
\caption{Squared biases (first row) and variances (second row) of ICNLEV, RLNLEV, PLNLEV, SLEV, and BLEV estimates in approximating $\hat{\sbf{\beta}}_{OLS}$ (in log scale) at different sample~sizes. 
}
\label{fig:cond:bias:var:bt}
\end{figure}

In Figure~\ref{fig:cond:bias:var:bt}, we plot the squared biases and variances (in log scale) for ICNLEV, RLNLEV, PLNLEV, SLEV, and BLEV estimates for approximating $\hat{\sbf{\beta}}_{OLS}$ at different sample sizes in all datasets. 
Several observations are worth noting in Figure~\ref{fig:cond:bias:var:bt}. 
First, the squared biases are negligible compared to the corresponding variances. 
For all sampling methods, both the squared biases and the variances decrease as sample size increases. 
These observations are in agreement with Theorem~\ref{thm:beta_tilde-beta_hat}, which states that the sampling estimators are asymptotically unbiased estimators of $\hat{\sbf\beta}_{OLS}$, provided that the regularity conditions are satisfied. 
Second, the variances of estimates using ICNLEV 
and RLNLEV are slightly smaller than the variances of estimates using other methods in \underline{T3} and \underline{LN} at most sample sizes. The variances of estimates using ICNLEV, RLNLEV, and PLNLEV are consistently smaller than those of SLEV and BLEV in \underline{LN} and \underline{T1}. 
Third, all sampling estimators perform better in \underline{LN} and \underline{T1} than in \underline{T3} and \underline{MN}, i.e., the squared biases and variances of all estimates in \underline{LN} and \underline{T1} are smaller than those in \underline{T3} and \underline{MN}. 

\begin{figure}[t] 
\centering
\includegraphics[scale=0.5]{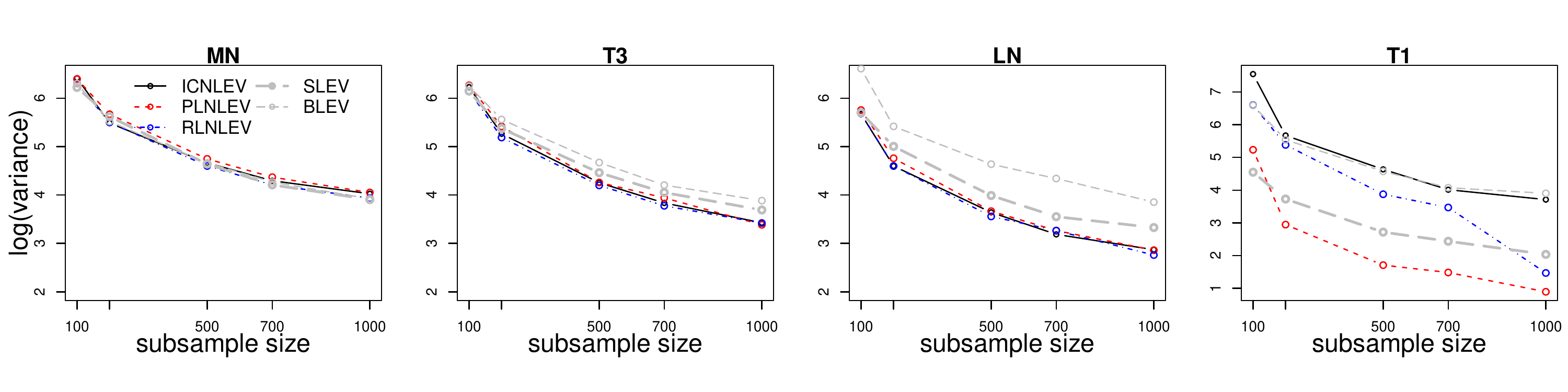}
\caption{The variances of ICNLEV, RLNLEV, PLNLEV, SLEV, and BLEV estimates in approximating $\hat{\mbf{Y}}_{OLS}(=\mbf X\hat{\sbf{\beta}}_{OLS})$ (in log scale) at different sample sizes. 
}
\label{fig:cond:bias:var:yhat}
\end{figure}

To examine the performance of the RandNLA sampling estimators for approximating $\hat{\mbf{Y}}_{OLS}(=\mbf X\hat{\sbf{\beta}}_{OLS})$, we plot the variances (in log scale) of $\mbf X\tilde{\sbf{\beta}}_{b}$, at different sample sizes, for all sampling estimators in Figure~\ref{fig:cond:bias:var:yhat}. 
The variances of estimates using RLNLEV, whose sampling probabilities minimize $EAMSE(\mbf{X}\tilde{\sbf\beta};\mbf{X}\hat{\sbf\beta}_{OLS})$, are slightly smaller than those of estimates using other methods at all sample sizes in \underline{T3} and at most sample sizes in \underline{LN}. 

To assess the performance of the RandNLA sampling estimators for approximating $\mbf X^T\mbf X\hat{\sbf{\beta}}_{OLS}$, we plot the variances (in log scale) of $\mbf{X}^T\mbf{X}\tilde{\sbf{\beta}}_{b}$, at different sample sizes, for all sampling estimators in Figure~\ref{fig:cond:bias:var:Lbt}. 
For all estimators, the variances decrease as the sample size increases. 
Also, in \underline{T3}, the variances of estimates using PLNLEV, whose sampling probabilities minimize $EAMSE(\mbf{X}^{T}\mbf{X}\tilde{\sbf\beta}; \mbf{X}^{T}\mbf{X}\hat{\sbf\beta}_{OLS})$  are smaller than the variances of estimates using other methods at most sample sizes. In this case, despite the violation of the conditions for the proper definition of EAMSE in \underline{T1}, the variances of PLNLEV estimates are still the smallest, when sample sizes are greater than 200. 

\begin{figure}[t] 
\centering
\includegraphics[scale=0.5]{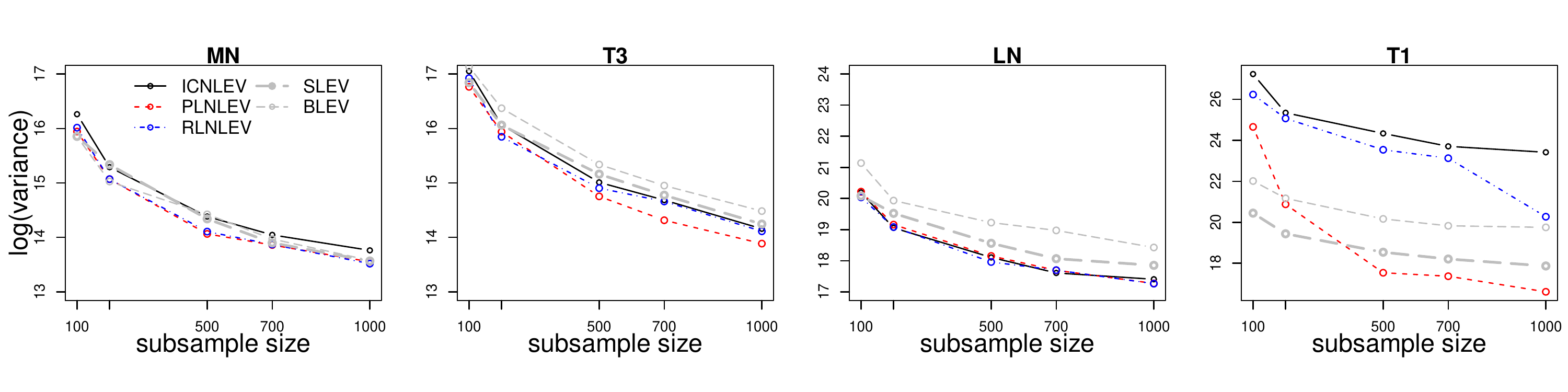}
\caption{The variances of ICNLEV, RLNLEV, PLNLEV, SLEV, and BLEV estimates in approximating $\mbf{X}^T\mbf{X}\hat{\sbf{\beta}}_{OLS}$ (in log scale)  at different sample sizes. 
}
\label{fig:cond:bias:var:Lbt}
\end{figure}

\subsection{Flight Delay Dataset}

Here, we evaluate the performance of the sampling estimators on a flight delay dataset we compiled from the website of the US Department of Transportation.%
\footnote{U. S. Bureau of Transportation Statistics. Rita airline delay data was downloaded from: \url{https://www.transtats.bts.gov/DL_SelectFields.asp?Table_ID=236}.  }
The dataset contains records of  $3,274,894$ US domestic flights during weekdays from Mondays to Thursdays in 2017. 
There are five variables for each flight record: arrival delay (difference in minutes between scheduled and actual arrival time, and early arrivals show negative numbers), arrival taxi in time (in minutes), departure taxi out time (in minutes), departure delays (difference in minutes between scheduled and actual departure time, and early departures show negative numbers), and computer reservation system based elapsed time of the flight (in minutes; a measure for the distance of the flight).
We are interested in predicting the arrival delay of each flight using the rest of the variables. 
We fitted  Model~(\ref{linreg-matrix}), with the response being flight arrival delay. 
In addition to using the four variables (other than arrival delay) in our dataset as linear predictors, we also included their quadratic and all pairwise interaction terms. We thus have 14 predictors. 
Considering the large number of flights, we use the sampling methods to approximate the full sample OLS estimate. 

\begin{figure}[t] 
\centering
\includegraphics[scale=0.26]{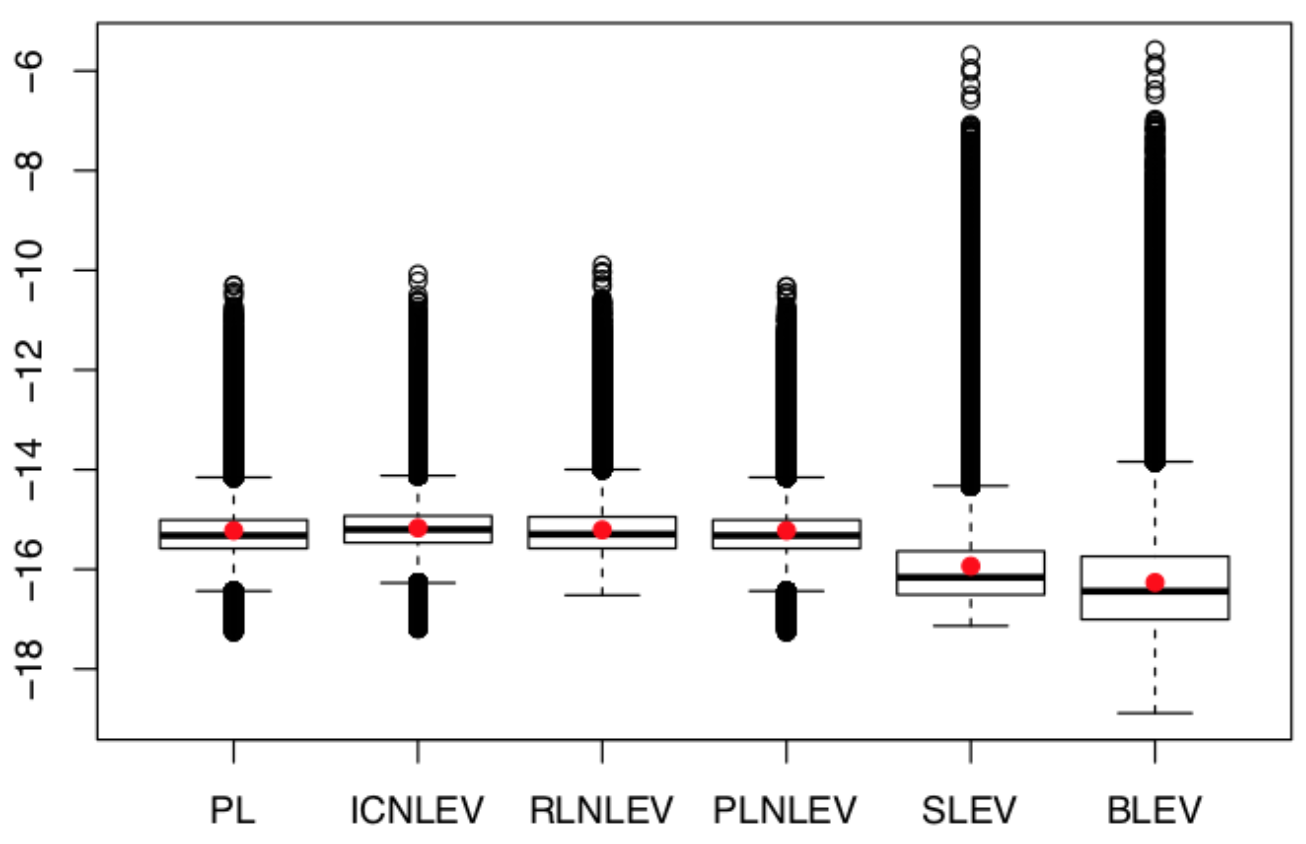}
\includegraphics[scale=0.32]{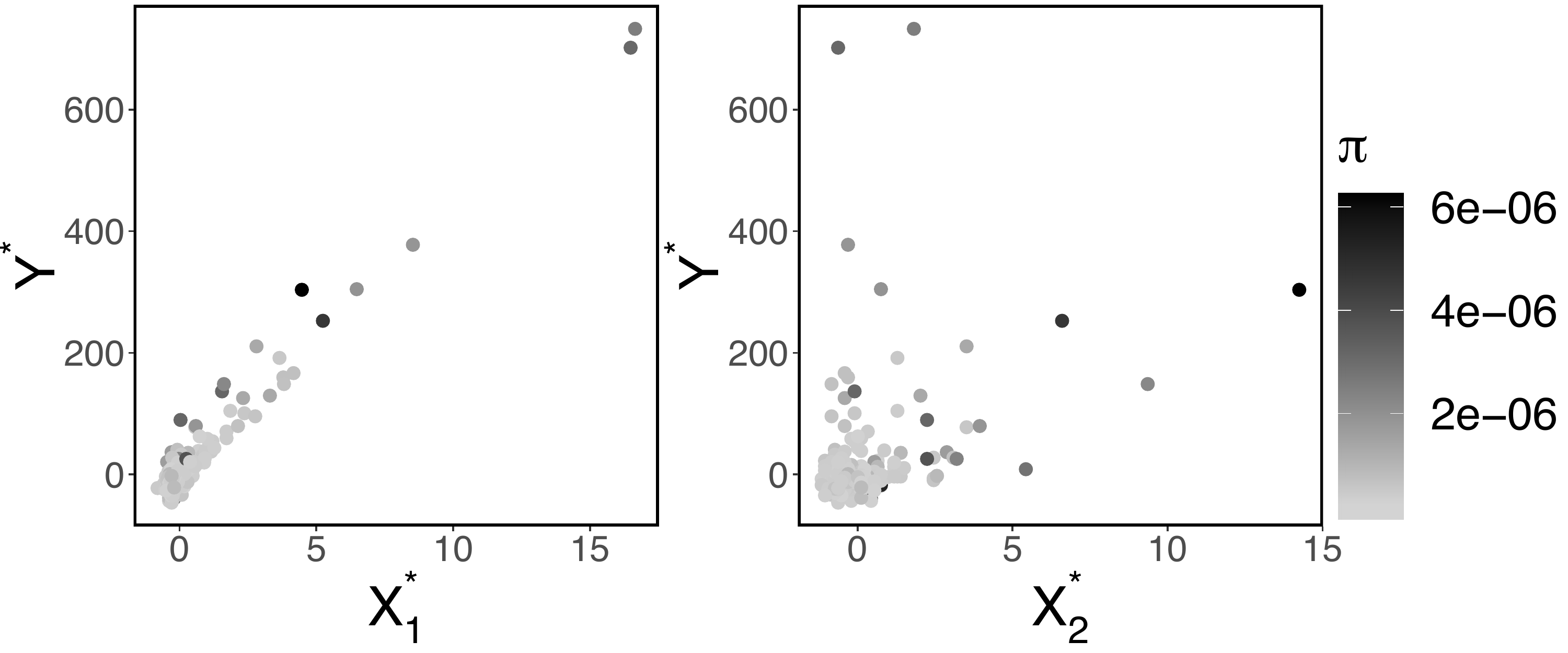}
\caption{Flight delay dataset. Left: the box plots of sampling probabilities (in log scale) of all data points in PL, ICNLEV, RLNLEV, PLNLEV, and BLEV. Middle and Right: the scatter plots of the $200$ sampled response vector (ARRIVAL$\_$DELAY) and two predictors (DEPARTURE$\_$DELAY and TAXI$\_$OUT) using the ICNLEV sampling probability distribution.  }
\label{fig:air:full}
\end{figure}
 
\begin{figure}[t] 
\centering
\includegraphics[scale=0.45]{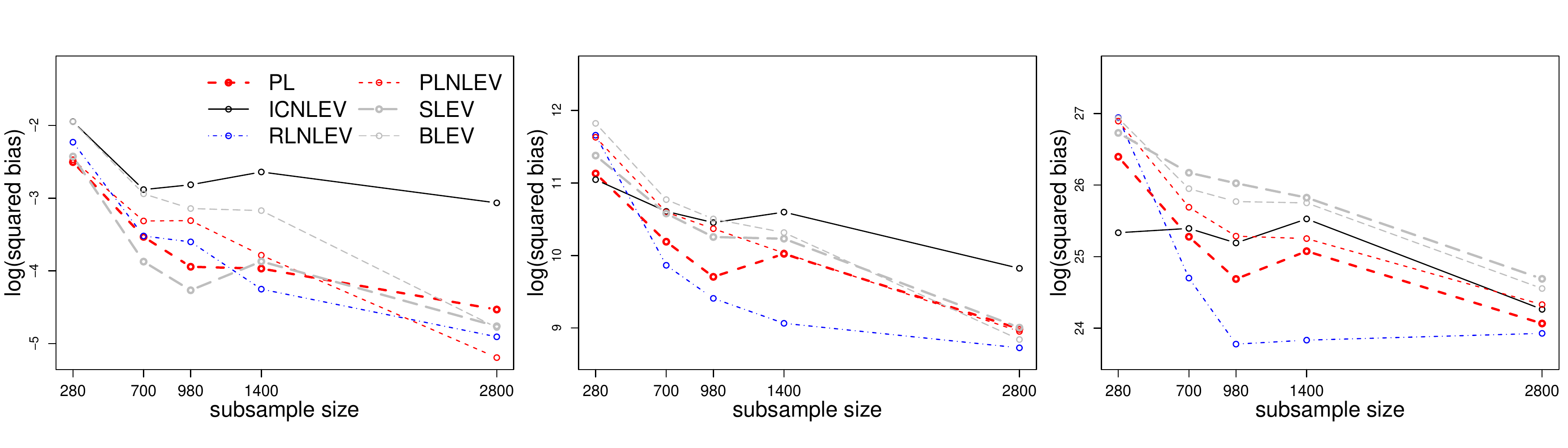}
\includegraphics[scale=0.45]{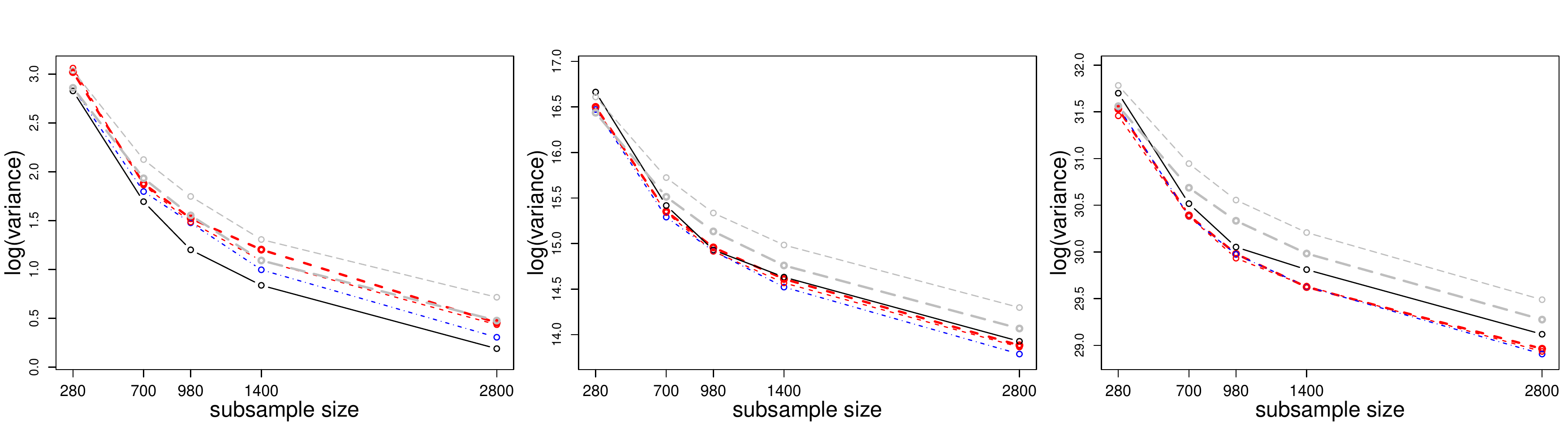}
\caption{Squared biases (first row) and variances (second row) of PL, ICNLEV, RLNLEV, PLNLEV, SLEV, and BLEV estimates for approximating $\hat{\sbf{\beta}}_{OLS}$ (first column), $\hat{\sbf{Y}}_{OLS}$ (second column) and $\mbf{X}^T\mbf{X}\hat{\sbf\beta}_{OLS}$ (third column) (in log scale) at different sample sizes for Airline Delay~data.}
\label{fig:air:bt:yhat}
\end{figure}

In the left panel of Figure~\ref{fig:air:full}, we present the box plots of sampling probabilities (in log scale) of all data points in PL, ICNLEV, RLNLEV, PLNLEV, SLEV, and BLEV. 
Observe that the sampling probability distributions are right-skewed, similar to those in Figure~\ref{fig:prob} in the simulation study. 
Using the sampling probability distribution in ICNLEV, we took a sample of size 200 from the full data. 
The middle and right panels in Figure~\ref{fig:air:full} are the scatter plots of the sampled response and the first two predictors, respectively. 
These scatter plots provide a visual sketch of the full sample data. 

We repeatedly applied the PL, ICNLEV, IC, PLNLEV, SLEV, and BLEV methods to this dataset for 100 times at sample size $r=20p,50p,70p,100p,200p$, where $p=14$. 
We calculated the squared bias and variance of the resulting estimates in approximating $\hat{\sbf\beta}_{OLS}$, $\hat{\sbf Y}_{OLS}$ and $\mbf{X}^T\mbf{X}\hat{\sbf\beta}_{OLS}$, for each method. 
The results are summarized in Figure~\ref{fig:air:bt:yhat}. 
Observe that the squared biases of all methods are all much smaller than the corresponding variances for all methods at all sample sizes. 
For approximating $\hat{\sbf\beta}_{OLS}$, the ICNLEV estimates have the smallest variance consistently at all sample sizes among all estimators. 
For approximating $\hat{\sbf Y}_{OLS}$ and $\mbf{X}^T\mbf{X}\hat{\sbf\beta}_{OLS}$, the estimates using PLNLEV, PL, and RLNLEV are very similar to each other, and they have better performance in terms of variances at all sample sizes than those using BLEV and SLEV.

\subsection{``YearPredictionMSD'' Dataset}

Here, we evaluate the performance of the sampling estimators on the ``YearPredictionMSD'' dataset~\citep{Bertin-Mahieux2011}, which we downloaded from the UCI machine learning repository.%
\footnote{See \url{http://archive.ics.uci.edu/ml/}.} 
The dataset consists of records of 515,345 songs released between the year 1922 and 2011. 
For each song, multiple segments are taken, and each segment is characterized by 12 timbre features. These timbre features capture timbral characteristics, such as brightness and flatness, of each segment. The mean and variance of each timbre feature, as well as the covariances between every two timbre features, are calculated. 
Our primary interest for our analysis is to use all timbre feature information to predict the year of release. 
We fitted  Model~(\ref{linreg-matrix}),
where the response is the year (in log scale) of releasing of the song, and the predictors include all timbre~features. 

\begin{figure}[t] 
\centering
\includegraphics[scale=0.26]{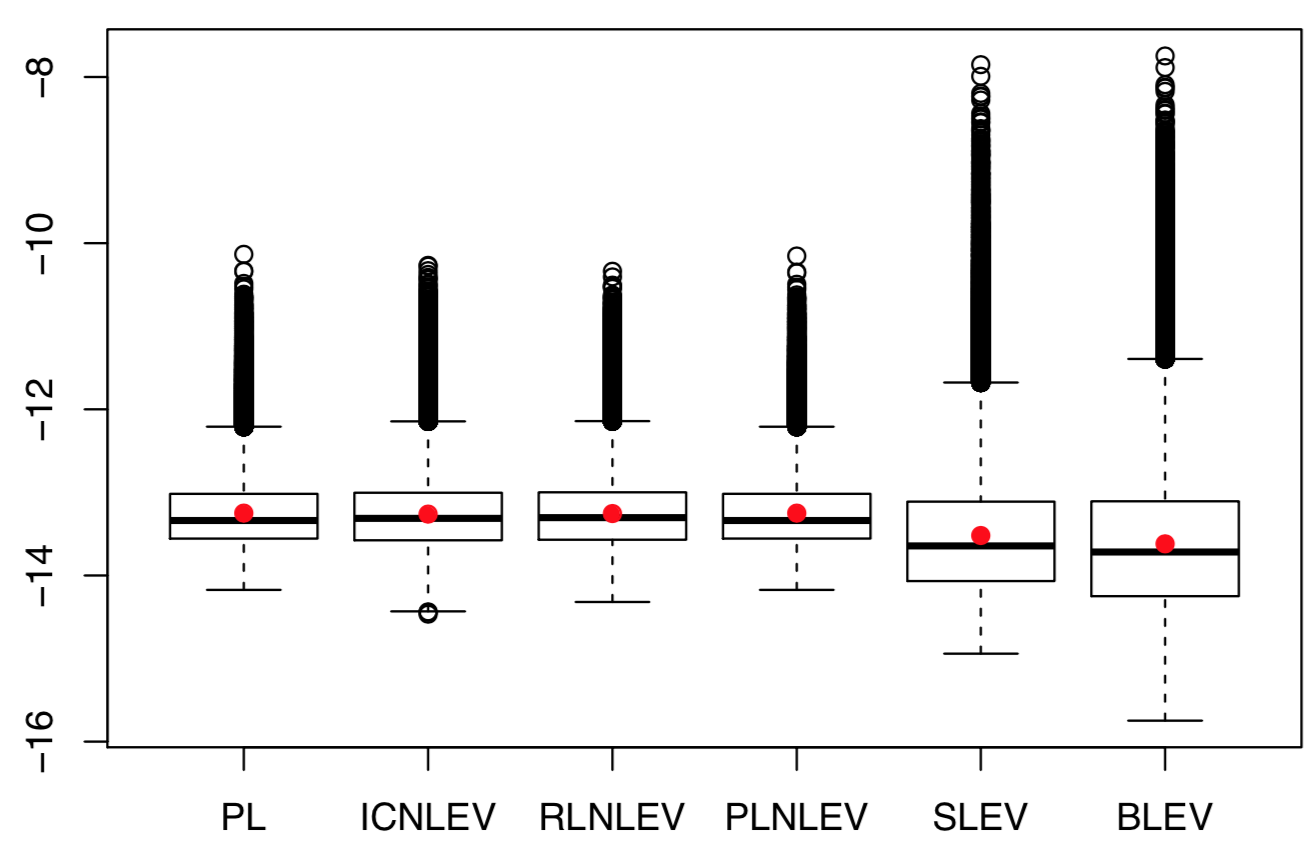}
\includegraphics[scale=0.32]{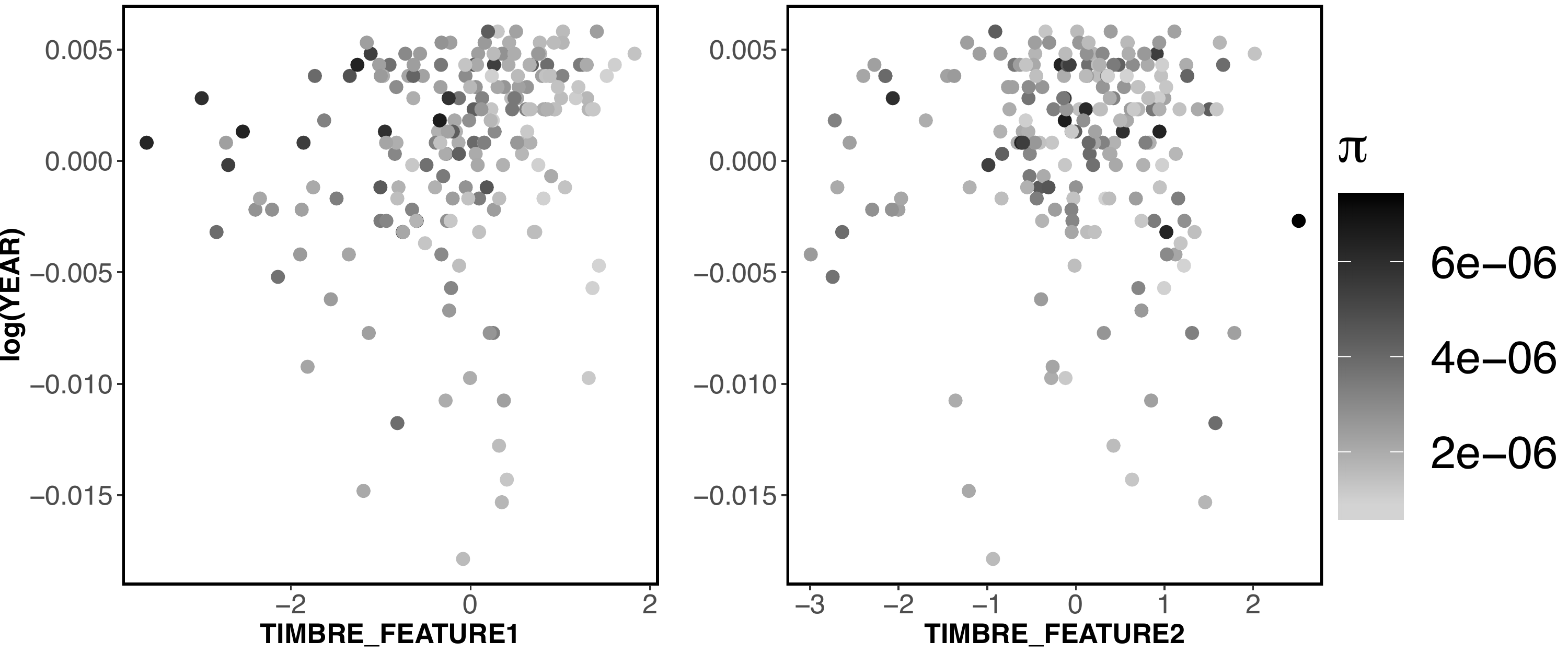}
\caption{ 
``YearPredictionMSD'' data. Left: the box plots of sampling probabilities (in log scale) for all data points for PL, ICNLEV, RLNLEV, PLNLEV, SLEV, and BLEV. A sample of size $200$ is taken from the full data using the sampling probabilities of ICNLEV. Middle and Right:  the scatter plots of sampled response  and two timbre feature predictors.}
\label{fig:music:2full}
\end{figure}

\begin{figure}[t] 
\centering
\includegraphics[scale=0.45]{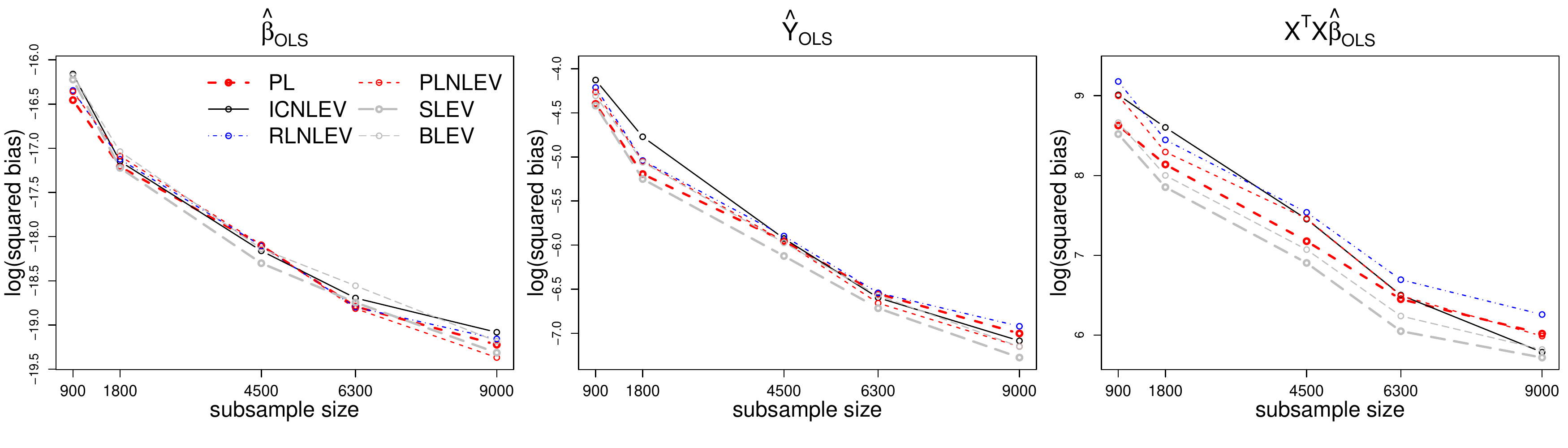}
\includegraphics[scale=0.45]{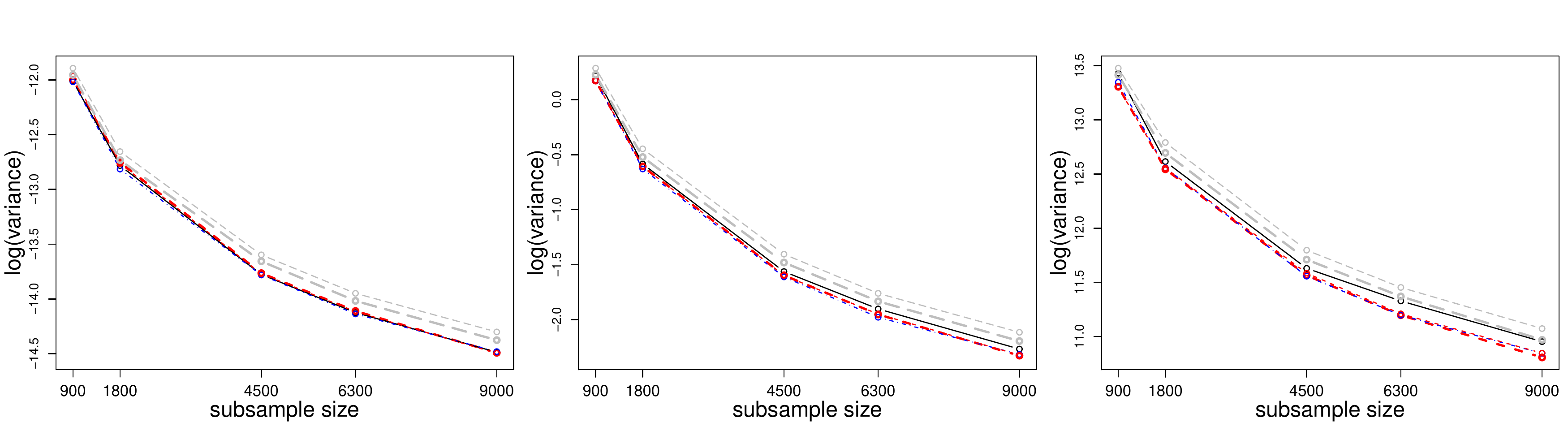}
\caption{Squared biases (first row) and variances (second row) of PL, ICNLEV, RLNLEV, PLNLEV, SLEV, and BLEV estimates for approximating $\hat{\sbf{\beta}}_{OLS}$ (first column), $\hat{\sbf{Y}}_{OLS}$ (second column), and $\mbf{X}^T\mbf{X}\hat{\sbf\beta}_{OLS}$ (third column) (in log scale) at different sample sizes for ``YearPredictionMSD'' data.  }
\label{fig:music:beta:yhat}
\end{figure}

In the left panel of Figure~\ref{fig:music:2full}, we present the box plots of sampling probabilities (in log scale) of all data points in PL, ICNLEV, RLNLEV, PLNLEV, SLEV, and BLEV.
Inspecting the box plots reveals that all sampling distributions are right-skewed and that the sampling distributions of SLEV and BLEV are much more dispersed than those of other estimators. 
Using the sampling probability distribution in ICNLEV, we took a sample of size 200 from the full data. 
The middle and right panels of Figure~\ref{fig:music:2full} are the scatter plots of the sampled response and two timbre features,~respectively. 

We repeatedly applied the ICNLEV, RLNLEV, PLNLEV, SLEV, and BLEV methods to the dataset for 100 times at sample sizes $r=10p,20p,50p,70p,100p$, where $p=90$. 
In Figure~\ref{fig:music:beta:yhat}, we plot the squared biases and the variances (in log scale) of the estimates for all weighted sampling methods for approximating $\hat{\sbf{\beta}}_{OLS}$, $\hat{\sbf{Y}}_{OLS}$, and $\mbf{X}^T\mbf{X}\hat{\sbf{\beta}}_{OLS}$.
For all three scenarios, the squared biases are much smaller than the corresponding variances, for all methods at all sample sizes. 
For approximating $\hat{\sbf{\beta}}_{OLS}$, the variances of ICNLEV, RLNLEV, and PLNLEV estimates are comparable to each other and consistently smaller than those of SLEV and BLEV estimates at all sample sizes. 
For approximating $\hat{\sbf{Y}}_{OLS}$ and $\mbf{X}^T\mbf{X}\hat{\sbf{\beta}}_{OLS}$, the variances of PLNLEV and PL estimates are consistently smaller than those of other estimates.

\section{Conclusion}
\label{sec:conclusion}

We have studied the asymptotic properties of RandNLA sampling estimators in LS linear regression models. 
We showed that under certain regularity conditions on the data distributions and sampling probability distributions, the sampling estimators are asymptotically normally distributed.  
Moreover, the sampling estimators are asymptotically unbiased for approximating the full sample OLS estimate and for estimating true coefficients.
Based on these asymptotic results, we proposed optimality criteria to assess the performance of the sampling estimators, based on AMSE and EAMSE.
In particular, we developed six sampling estimators, i.e.,  IC, RLEV, PL, ICNLEV, RLNLEV, and PLNLEV, for minimizing AMSE and EAMSE, under a variety of settings. 
These empirical results demonstrate that these new sampling estimators outperform the conventional ones in the literature.
For generalization, depending on the application, one may consider criteria other than AMSE and EAMSE. 
For example, when hypothesis testing problems are of primary interest, the power of the test is a more reasonable choice to serve as a criterion.
Developing scalable sampling methods to optimize criteria such as this are of interest. 

\section*{Acknowledgment}
We would like to thank 
Shusen Wang for providing constructive comments on an earlier version of this paper
and 
Bin Yu for helpful discussions.
PM, XZ, and XX 
acknowledge NSF and NIH for providing partial support of this work. 
MWM 
acknowledges 
ARO, DARPA, NSF, and ONR for providing partial support of this work.


\begin{thebibliography}{}

\bibitem[\protect\citeauthoryear{Avron, Maymounkov, and Toledo}{Avron
  et~al.}{2010}]{AMT10}
Avron, H., P.~Maymounkov, and S.~Toledo (2010).
\newblock Blendenpik: Supercharging {LAPACK}'s least-squares solver.
\newblock {\em SIAM Journal on Scientific Computing\/}~{\em 32}, 1217--1236.

\bibitem[\protect\citeauthoryear{Bertin-Mahieux, Ellis, Whitman, and
  Lamere}{Bertin-Mahieux et~al.}{2011}]{Bertin-Mahieux2011}
Bertin-Mahieux, T., D.~P. Ellis, B.~Whitman, and P.~Lamere (2011).
\newblock The million song dataset.
\newblock In {\em {Proceedings of the 12th International Conference on Music
  Information Retrieval ({ISMIR} 2011)}}.

\bibitem[\protect\citeauthoryear{Billingsley}{Billingsley}{1995}]{billingsley2012probability}
Billingsley, P. (1995).
\newblock {\em Probability and Measure}.
\newblock Wiley Series in Probability and Mathematical Statistics. Wiley.

\bibitem[\protect\citeauthoryear{Chen, Varma, Singh, and
  Kova{\v{c}}cevi{\'c}}{Chen et~al.}{2016}]{chen2016statistical}
Chen, S., R.~Varma, A.~Singh, and J.~Kova{\v{c}}cevi{\'c} (2016).
\newblock A statistical perspective of sampling scores for linear regression.
\newblock In {\em Information Theory (ISIT), 2016 IEEE International
  Symposium}, pp.\  1556--1560. IEEE.

\bibitem[\protect\citeauthoryear{Cover and Thomas}{Cover and
  Thomas}{2006}]{Cover:2006:EIT:1146355}
Cover, T.~M. and J.~A. Thomas (2006).
\newblock {\em Elements of Information Theory (Wiley Series in
  Telecommunications and Signal Processing)}.
\newblock Wiley-Interscience.

\bibitem[\protect\citeauthoryear{Derezi{\'n}ski, Clarkson, Mahoney, and
  Warmuth}{Derezi{\'n}ski et~al.}{2019}]{DCMW19_TR}
Derezi{\'n}ski, M., K.~L. Clarkson, M.~W. Mahoney, and M.~K. Warmuth (2019).
\newblock Minimax experimental design: Bridging the gap between statistical and
  worst-case approaches to least squares regression.
\newblock Technical report.
\newblock Preprint: arXiv:1902.00995.

\bibitem[\protect\citeauthoryear{Drineas, Magdon-Ismail, Mahoney, and
  Woodruff}{Drineas et~al.}{2012}]{DMMW12_JMLR}
Drineas, P., M.~Magdon-Ismail, M.~W. Mahoney, and D.~P. Woodruff (2012).
\newblock Fast approximation of matrix coherence and statistical leverage.
\newblock {\em Journal of Machine Learning Research\/}~{\em 13}, 3475--3506.

\bibitem[\protect\citeauthoryear{Drineas and Mahoney}{Drineas and
  Mahoney}{2016}]{Drineas:Mahoney:2016:RRN:ACM}
Drineas, P. and M.~W. Mahoney (2016).
\newblock {RandNLA: Randomized Numerical Linear Algebra}.
\newblock {\em Communications of the ACM\/}~{\em 59\/}(6), 80--90.

\bibitem[\protect\citeauthoryear{Drineas and Mahoney}{Drineas and
  Mahoney}{2018}]{RandNLA_PCMIchapter_chapter}
Drineas, P. and M.~W. Mahoney (2018).
\newblock Lectures on randomized numerical linear algebra.
\newblock In M.~W. Mahoney, J.~C. Duchi, and A.~C. Gilbert (Eds.), {\em The
  Mathematics of Data}, IAS/Park City Mathematics Series, pp.\  1--48.
  AMS/IAS/SIAM.

\bibitem[\protect\citeauthoryear{Drineas, Mahoney, and Muthukrishnan}{Drineas
  et~al.}{2006a}]{DMM06}
Drineas, P., M.~W. Mahoney, and S.~Muthukrishnan (2006a).
\newblock Sampling algorithms for $\ell_2$ regression and applications.
\newblock In {\em Proceedings of the 17th Annual ACM-SIAM Symposium on Discrete
  Algorithms}, pp.\  1127--1136.

\bibitem[\protect\citeauthoryear{Drineas, Mahoney, and Muthukrishnan}{Drineas
  et~al.}{2006b}]{Drineas:06}
Drineas, P., M.~W. Mahoney, and S.~Muthukrishnan (2006b).
\newblock Sampling algorithms for $l_2$ regression and applications.
\newblock In {\em Proceedings of the 17th Annual ACM-SIAM Symposium on Discrete
  Algorithms}, pp.\  1127--1136.

\bibitem[\protect\citeauthoryear{Drineas, Mahoney, and Muthukrishnan}{Drineas
  et~al.}{2008}]{DMM08}
Drineas, P., M.~W. Mahoney, and S.~Muthukrishnan (2008).
\newblock Relative-error {CUR} matrix decomposition.
\newblock {\em SIAM Journal on Matrix Analysis and Applications\/}~{\em 30},
  844--881.

\bibitem[\protect\citeauthoryear{Halko, Martinsson, and Tropp}{Halko
  et~al.}{2011}]{HMT09_SIREV}
Halko, N., P.-G. Martinsson, and J.~A. Tropp (2011).
\newblock Finding structure with randomness: Probabilistic algorithms for
  constructing approximate matrix decompositions.
\newblock {\em SIAM Review\/}~{\em 53\/}(2), 217--288.

\bibitem[\protect\citeauthoryear{Hubbard and Hubbard}{Hubbard and
  Hubbard}{1999}]{hubbard1999vector}
Hubbard, J. and B.~Hubbard (1999).
\newblock {\em Vector Calculus, Linear Algebra, and Differential Forms: A
  Unified Approach}.
\newblock Prentice Hall.

\bibitem[\protect\citeauthoryear{Huber}{Huber}{1973}]{huber1973robust}
Huber, P.~J. (1973).
\newblock Robust regression: asymptotics, conjectures and monte carlo.
\newblock {\em The Annals of Statistics\/}, 799--821.

\bibitem[\protect\citeauthoryear{Lai, Robbins, and Wei}{Lai
  et~al.}{1978}]{lai1978PNAS}
Lai, T.~L., H.~Robbins, and C.~Z. Wei (1978).
\newblock Strong consistency of least squares estimates in multiple regression.
\newblock {\em Proceedings of the National Academy of Sciences\/}~{\em
  75\/}(7), 3034--3036.

\bibitem[\protect\citeauthoryear{Le~Cam}{Le~Cam}{1986}]{lecam1986asymptotic}
Le~Cam, L. (1986).
\newblock {\em Asymptotic Methods in Statistical Decision Theory}.
\newblock Springer-Verlag.

\bibitem[\protect\citeauthoryear{Lehmann and Romano}{Lehmann and
  Romano}{2006}]{lehmann2006testing}
Lehmann, E.~L. and J.~P. Romano (2006).
\newblock {\em Testing Statistical Hypotheses}.
\newblock Springer Science \& Business Media.

\bibitem[\protect\citeauthoryear{Ma, Mahoney, and Yu}{Ma et~al.}{2014}]{Ma:13}
Ma, P., M.~Mahoney, and B.~Yu (2014).
\newblock A statistical perspective on algorithmic leveraging.
\newblock In {\em Proceedings of the 31th ICML Conference}, pp.\  91--99.

\bibitem[\protect\citeauthoryear{Ma, Mahoney, and Yu}{Ma et~al.}{2015}]{Ma:15}
Ma, P., M.~Mahoney, and B.~Yu (2015).
\newblock A statistical perspective on algorithmic leveraging.
\newblock {\em Journal of Machine Learning Research\/}~{\em 16}, 861--911.

\bibitem[\protect\citeauthoryear{Ma, Zhang, Xing, Ma, and Mahoney}{Ma
  et~al.}{2020}]{asymptotic_RandNLA_estimators_CONF}
Ma, P., X.~Zhang, X.~Xing, J.~Ma, and M.~W. Mahoney (2020).
\newblock Asymptotic analysis of sampling estimators for randomized numerical
  linear algebra algorithms.
\newblock In {\em Proceedings of the 23rd International Workshop on Artificial
  Intelligence and Statistics}.

\bibitem[\protect\citeauthoryear{Mahoney}{Mahoney}{2011}]{Mah-mat-revBOOK}
Mahoney, M. (2011).
\newblock {\em Randomized Algorithms for Matrices and Data}.
\newblock Foundations and Trends in Machine Learning. Boston: NOW Publishers.
\newblock Also available at: arXiv:1104.5557.

\bibitem[\protect\citeauthoryear{Mahoney and Drineas}{Mahoney and
  Drineas}{2016}]{MD16_chapter}
Mahoney, M.~W. and P.~Drineas (2016).
\newblock Structural properties underlying high-quality randomized numerical
  linear algebra algorithms.
\newblock In P.~B\"{u}hlmann, P.~Drineas, M.~Kane, and M.~van~de Laan (Eds.),
  {\em Handbook of Big Data}, pp.\  137--154. CRC Press.

\bibitem[\protect\citeauthoryear{Meng, Saunders, and Mahoney}{Meng
  et~al.}{2014}]{meng2014lsrn}
Meng, X., M.~A. Saunders, and M.~W. Mahoney (2014).
\newblock {LSRN}: A parallel iterative solver for strongly over- or
  under-determined systems.
\newblock {\em SIAM Journal on Scientific Computing\/}~{\em 36\/}(2),
  C95--C118.

\bibitem[\protect\citeauthoryear{Morris}{Morris}{1975}]{cltformult1975}
Morris, C. (1975).
\newblock Central limit theorems for multinomial sums.
\newblock {\em Annals of Statistics\/}~{\em 3\/}(1), 165--188.

\bibitem[\protect\citeauthoryear{Pilanci and Wainwright}{Pilanci and
  Wainwright}{2016}]{JMLR:v17:14-460}
Pilanci, M. and M.~J. Wainwright (2016).
\newblock Iterative {H}essian sketch: Fast and accurate solution approximation
  for constrained least-squares.
\newblock {\em Journal of Machine Learning Research\/}~{\em 17\/}(53), 1--38.

\bibitem[\protect\citeauthoryear{Portnoy}{Portnoy}{1984}]{portnoy1984asymptoticI}
Portnoy, S. (1984).
\newblock Asymptotic behavior of {M}-estimators of $p$ regression parameters
  when $p^2/n$ is large. {I.} {C}onsistency.
\newblock {\em The Annals of Statistics\/}, 1298--1309.

\bibitem[\protect\citeauthoryear{Portnoy}{Portnoy}{1985}]{portnoy1985asymptoticII}
Portnoy, S. (1985).
\newblock Asymptotic behavior of {M} estimators of $p$ regression parameters
  when $p^2/n$ is large; {II.} {N}ormal approximation.
\newblock {\em The Annals of Statistics\/}, 1403--1417.

\bibitem[\protect\citeauthoryear{Raskutti and Mahoney}{Raskutti and
  Mahoney}{2015}]{raskutti:2015}
Raskutti, G. and M.~Mahoney (2015).
\newblock A statistical perspective on randomized sketching for ordinary
  least-squares.
\newblock In {\em Proceedings of the 32th ICML Conference}, pp.\  617--625.

\bibitem[\protect\citeauthoryear{Serfling}{Serfling}{2001}]{Serfling:01}
Serfling, R. (2001).
\newblock {\em Approximation Theorems of Mathematical Statistics}.
\newblock Wiley, New York.

\bibitem[\protect\citeauthoryear{Shao}{Shao}{2003}]{shao2003mathematical}
Shao, J. (2003).
\newblock {\em Mathematical Statistics}.
\newblock Springer Texts in Statistics. Springer Verlag.

\bibitem[\protect\citeauthoryear{Sheldon}{Sheldon}{2006}]{sheldon2006first}
Sheldon, R. (2006).
\newblock {\em A First Course in Probability\/} (7th ed.).
\newblock Pearson Education India.

\bibitem[\protect\citeauthoryear{Steck}{Steck}{1957}]{steck1957condlimit}
Steck, G.~P. (1957).
\newblock {\em Limit Theorems for Conditional Distributions}.
\newblock University of California Press, Berkeley.

\bibitem[\protect\citeauthoryear{Wang}{Wang}{2019}]{wang2019more}
Wang, H. (2019).
\newblock More efficient estimation for logistic regression with optimal
  subsamples.
\newblock {\em Journal of Machine Learning Research\/}~{\em 20\/}(132), 1--59.

\bibitem[\protect\citeauthoryear{Wang, Zhu, and Ma}{Wang
  et~al.}{2018}]{wang2018optimal}
Wang, H., R.~Zhu, and P.~Ma (2018).
\newblock Optimal subsampling for large sample logistic regression.
\newblock {\em Journal of the American Statistical Association\/}~{\em
  113\/}(522), 829--844.

\bibitem[\protect\citeauthoryear{Wang, Lee, Mahdavi, Kolar, and Srebro}{Wang
  et~al.}{2017}]{wang2017sketching}
Wang, J., J.~D. Lee, M.~Mahdavi, M.~Kolar, and N.~Srebro (2017).
\newblock Sketching meets random projection in the dual: A provable recovery
  algorithm for big and high-dimensional data.
\newblock {\em Electronic Journal of Statistics\/}~{\em 11\/}(2), 4896--4944.

\bibitem[\protect\citeauthoryear{Wang, Yu, and Singh}{Wang
  et~al.}{2017}]{2016arXiv160102068W}
Wang, Y., A.~W. Yu, and A.~Singh (2017).
\newblock On computationally tractable selection of experiments in
  measurement-constrained regression models.
\newblock {\em Journal of Machine Learning Research\/}~{\em 18\/}(143), 1--41.

\bibitem[\protect\citeauthoryear{Woodruff et~al.}{Woodruff
  et~al.}{2014}]{woodruff2014sketching}
Woodruff, D.~P. et~al. (2014).
\newblock Sketching as a tool for numerical linear algebra.
\newblock {\em Foundations and Trends{\textregistered} in Theoretical Computer
  Science\/}~{\em 10\/}(1--2), 1--157.

\bibitem[\protect\citeauthoryear{Yohai and Maronna}{Yohai and
  Maronna}{1979}]{yohai1979asymptotic}
Yohai, V.~J. and R.~A. Maronna (1979).
\newblock Asymptotic behavior of {M}-estimators for the linear model.
\newblock {\em The Annals of Statistics\/}, 258--268.

\bibitem[\protect\citeauthoryear{Zou}{Zou}{2006}]{zou:adaptive:2006}
Zou, H. (2006).
\newblock The adaptive lasso and its oracle properties.
\newblock {\em Journal of the American Statistical Association\/}~{\em
  101\/}(476), 1418--1429.

\end{thebibliography}

\appendix

\section{Proofs of Our Main Results}

In this Appendix, we collect the proofs of our main results.

\subsection{Notation and Technical  Preliminaries}

Let $K_i$ represent the number of times the $i^{th}$ observation is sampled. 
It is easy to see that $(K_1,\ldots, K_n)$ follows a multinomial distribution, Mult($r, \{\pi_i\}_{i=1}^n$), with sample size $r$, as the total number of trials. 
Define $\mbf{K} = \text{diag}\{K_i\}_{i=1}^n$, $\mbf{\Omega} = \text{diag}\{1/r\pi_i\}_{i=1}^n$, and $\mbf{W}=\mbf{\Omega} \mbf{K}$. 
For the $i^{th}$ diagonal element of matrix $\mbf{W}$, denoted as $W_i$, we have 
\begin{equation}
 \E(W_{i}) = 1,  \quad  \text{Var}(W_{i}) = \frac{(1-\pi_i)}{r\pi_i}, \quad  \text{Cov}(W_{i},W_{j}) = -\frac{1}{r}, \quad   i \neq j, \quad  i, j =1,\ldots, n. 
\end{equation}
Simple algebra yields that the sampling estimator of 
Eqn.~(\ref{wlsq-sample})
can be written as
\begin{equation}\label{eqn:wt:sub:trans}
 \tilde{\sbf\beta}=(\mbf{X}^{*T}\mbf{\Phi}^{*2}\mbf{X}^{*})^{-1}\mbf{X}^{*T}\mbf{\Phi}^{*2}\mbf{Y}^{*}=(\mbf{X}^T\mbf{W}\mbf{X})^{-1}\mbf{X}^T\mbf{W}\mbf{Y}.
\end{equation}

\noindent
\textbf{$O_p$ Notation.}
\begin{itshape}
The $O_p$ notation is the stochastic counterpart of the regular big-$O$ notation, i.e., it describes the limiting behavior of (or the order of) a sequence of random variables, rather than that of sequence of fixed numbers.

For a sequence of random variable $\{A_n\}$ and a sequence of constants $\{a_n\}$, the notation $A_n=O_p(a_n)$, means that $\{A_n/a_n\}$ is stochastically bounded (or bounded in probability). 
That is, for any $\epsilon>0$, 
\begin{equation}
\lim_{n\rightarrow\infty}P(|A_n/a_n|>\epsilon)=0.  
\end{equation}
More details and examples of this can be found in Section 1.2 of \citet{Serfling:01}.
\end{itshape}

\textbf{Remark.}
If 
Var($A_n$)=$O(n^{2\delta})$, where $\delta$ is a real number, then we have that $\{A_n/n^{\frac\delta2}\}$ is bounded in probability by Chebyshev's inequality. 
We write $A_n=O_p(n^{\delta})$

\textbf{Remark.}
Throughout this paper, for a matrix $\mbf A$, we write $\mbf A=O_p(n^\delta)$ to denote that all elements of $\mbf A$ are of the order of $O_p(n^\delta)$.

Other than in the statement and proof of Theorem~\ref{coro:diverg:p}, we assume that $p$ is fixed in all lemmas and theorems.
The Cramer-Wold Device and Lemma~\ref{lem:1} below govern the proofs for Theorem~\ref{thm:beta_tilde-beta} and Theorem~\ref{thm:beta_tilde-beta_hat}.

\noindent
\textbf{Cramer-Wold Device.}
\begin{itshape}
For random vectors $\mbf Z_n=(Z_{n1}, \ldots, Z_{np})^T$ and $\mbf Z=(Z_{1}, \ldots, Z_{p})^T$, a necessary and sufficient condition for $\mbf Z_n\stackrel{d}{\rightarrow}\mbf Z$ is that $\sbf b^T \mbf Z_n\stackrel{d}{\rightarrow}\sbf b^T\mbf Z$ as $n\rightarrow\infty$, for each $\sbf b\in \mathbb{R}^p$.
\end{itshape}

\textbf{Remark.}
To derive the asymptotic distribution for the sampling estimator $\tilde{\sbf\beta}$ in (\ref{eqn:wt:sub:trans}), which a vector of random variables, we use the Cramer-Wold device to reduce the derivation of the asymptotic distribution for \emph{vectors} to the usual \emph{scalar} case. 
For more details about the Cramer-Wold device, see Section 29 of \citet{billingsley2012probability}.

\noindent
\textbf{Convergence of Geometric Series of Matrices.}
\begin{itshape}
Let $\mbf A$ be an $n\times n$ square matrix. We use $\rho(\mbf A)$ to denote the spectral radius of matrix $\mbf A$, i.e., $\rho(\mbf A)=\max \left\{|\lambda _{1}|,\dotsc ,|\lambda _{n}|\right\}$, where $\lambda _{1},\dotsc ,\lambda _{n}$ are the eigenvalues of matrix $\mbf A$. If $\rho(\mbf A)<1$, then $(\mbf I-\mbf A)$ is invertible, and the series 
\begin{eqnarray} 
\mbf S&=&\nonumber \mbf I+ \mbf A+\mbf A^2+\ldots
\end{eqnarray}
converges to $(\mbf I-\mbf A)^{-1}$. 
\end{itshape}

\textbf{Remark.}
The convergence of geometric series of matrices will be used in the proof of Lemma~\ref{lem:1} below. 
For more details and a proof of this result, see Section 1.5 of \citet{hubbard1999vector}.

\begin{lem}\label{lem:1}
Assume that $0<\pi_i<1$, for $i=1,\ldots,n$. 
If 
 \begin{eqnarray}
 (\mbf{X}^T\mbf{X})^{-1}\mbf{X}^T(\mbf{W}-\mbf{I})\mbf{X}&=&\label{eqn:delta:lev}O_p\left(\frac{1}{r^{\frac\delta2}}\right), 
 \end{eqnarray} 
where $\delta$ is a positive constant, then the weighted sample estimator in (\ref{eqn:wt:sub:trans}) can be written as
 \begin{equation} \label{order}
 \tilde{\sbf\beta} =  \hat{\sbf\beta}_{OLS}+(\mbf{X}^T\mbf{X})^{-1}\mbf{X}^T\mbf{W}\sbf e+O_p(1/r^{\delta}), 
\end{equation}
where $\sbf e=\mbf Y- \mbf{X}\hat{\sbf\beta}_{OLS}$. 
\end{lem}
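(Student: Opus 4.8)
The plan is to first reduce the claim to a statement purely about replacing the random matrix $(\mbf{X}^T\mbf{W}\mbf{X})^{-1}$ by its deterministic counterpart $(\mbf{X}^T\mbf{X})^{-1}$, and then to control that replacement with a Neumann (geometric) series expansion. First I would write $\mbf{Y}=\mbf{X}\hat{\sbf\beta}_{OLS}+\sbf{e}$ and substitute into (\ref{eqn:wt:sub:trans}). Because $(\mbf{X}^T\mbf{W}\mbf{X})^{-1}(\mbf{X}^T\mbf{W}\mbf{X})\hat{\sbf\beta}_{OLS}=\hat{\sbf\beta}_{OLS}$, the $\mbf{X}\hat{\sbf\beta}_{OLS}$ piece collapses and I obtain the \emph{exact} identity
\[
\tilde{\sbf\beta}=\hat{\sbf\beta}_{OLS}+(\mbf{X}^T\mbf{W}\mbf{X})^{-1}\mbf{X}^T\mbf{W}\sbf{e}.
\]
Thus it suffices to show that $\big[(\mbf{X}^T\mbf{W}\mbf{X})^{-1}-(\mbf{X}^T\mbf{X})^{-1}\big]\mbf{X}^T\mbf{W}\sbf{e}=O_p(1/r^{\delta})$.

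Next I would factor $\mbf{X}^T\mbf{W}\mbf{X}=(\mbf{X}^T\mbf{X})(\mbf{I}+\mbf{A})$ with $\mbf{A}:=(\mbf{X}^T\mbf{X})^{-1}\mbf{X}^T(\mbf{W}-\mbf{I})\mbf{X}$, which is exactly the quantity the hypothesis controls, namely $\mbf{A}=O_p(1/r^{\delta/2})$. Since $\mbf{A}\to\mbf{0}$ in probability, the event $\{\rho(\mbf{A})<1\}$ has probability tending to one, and on it the convergence-of-geometric-series result gives $(\mbf{I}+\mbf{A})^{-1}=\mbf{I}-\mbf{A}+\mbf{A}^2-\cdots$, whose tail $\sum_{k\ge2}(-\mbf{A})^{k}$ is $O_p(1/r^{\delta})$ because it is dominated by $\mbf{A}^2$. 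Hence $(\mbf{X}^T\mbf{W}\mbf{X})^{-1}-(\mbf{X}^T\mbf{X})^{-1}=\big(-\mbf{A}+O_p(1/r^{\delta})\big)(\mbf{X}^T\mbf{X})^{-1}$, and multiplying through by $\mbf{X}^T\mbf{W}\sbf{e}$ reduces the remaining task to bounding the term $\mbf{A}\,(\mbf{X}^T\mbf{X})^{-1}\mbf{X}^T\mbf{W}\sbf{e}$.

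The crux is the companion order bound $(\mbf{X}^T\mbf{X})^{-1}\mbf{X}^T\mbf{W}\sbf{e}=O_p(1/r^{\delta/2})$. Here I would again use the normal equations $\mbf{X}^T\sbf{e}=\mbf{0}$, which give $\mbf{X}^T\mbf{W}\sbf{e}=\mbf{X}^T(\mbf{W}-\mbf{I})\sbf{e}$, so that $(\mbf{X}^T\mbf{X})^{-1}\mbf{X}^T\mbf{W}\sbf{e}=(\mbf{X}^T\mbf{X})^{-1}\mbf{X}^T(\mbf{W}-\mbf{I})\sbf{e}$; this is structurally identical to the hypothesized term, but contracts the centered weights $(W_i-1)$ against the residual vector $\sbf{e}$ rather than against the columns of $\mbf{X}$. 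I would establish its rate by the same second-moment (Chebyshev) computation that is used to verify the hypothesis, using $\E(W_i)=1$, $\text{Var}(W_i)=(1-\pi_i)/(r\pi_i)$, and $\text{Cov}(W_i,W_j)=-1/r$ to show the variance is $O(1/r^{\delta})$. Granting this, $\mbf{A}\,(\mbf{X}^T\mbf{X})^{-1}\mbf{X}^T\mbf{W}\sbf{e}=O_p(1/r^{\delta/2})\cdot O_p(1/r^{\delta/2})=O_p(1/r^{\delta})$ and the higher-order term is $O_p(1/r^{3\delta/2})$, which together yield (\ref{order}).

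I expect the main obstacle to be precisely this companion bound: one must verify that contracting $(\mbf{W}-\mbf{I})$ against $\sbf{e}$ produces the \emph{same} exponent $\delta$ as contracting it against $\mbf{X}$, tracking carefully how the reciprocal sampling probabilities inflate the variance. The only other point demanding care is that the Neumann expansion is valid only on $\{\rho(\mbf{A})<1\}$, but since this event has probability approaching one it does not affect any $O_p$ conclusion.
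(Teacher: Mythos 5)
Your proposal is correct and follows essentially the same route as the paper: both hinge on the geometric-series (Neumann) expansion of $(\mbf{I}+\mbf{A})^{-1}$ with $\mbf{A}=(\mbf{X}^T\mbf{X})^{-1}\mbf{X}^T(\mbf{W}-\mbf{I})\mbf{X}$, the normal equations $\mbf{X}^T\sbf{e}=\mbf{0}$, and the claim that contracting $(\mbf{W}-\mbf{I})$ against $\sbf{e}$ carries the same $O_p(1/r^{\delta/2})$ order as contracting it against $\mbf{X}$ (which the paper likewise asserts, via its remark that the variances of $\mbf{Y}$ and $\sbf{e}$ are bounded, so your "companion bound" is no more of a gap than the paper's own step). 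Your one deviation---substituting $\mbf{Y}=\mbf{X}\hat{\sbf\beta}_{OLS}+\sbf{e}$ at the outset to get the exact identity $\tilde{\sbf\beta}=\hat{\sbf\beta}_{OLS}+(\mbf{X}^T\mbf{W}\mbf{X})^{-1}\mbf{X}^T\mbf{W}\sbf{e}$---is a cleaner bookkeeping that avoids the cross-term cancellation the paper carries out explicitly, but it is the same decomposition and the same estimates.
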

\begin{proof}
By (\ref{eqn:delta:lev}), we have
\begin{eqnarray}
((\mbf{X}^T\mbf{X})^{-1}\mbf{X}^T(\mbf{W}-\mbf{I})\mbf{X})^2&=&\label{eqn:A2} O_p(1/r^{\delta}).
\end{eqnarray}
Therefore, 
\begin{equation}\label{eqn:goto0}
[\mbf{I}+(\mbf{X}^T\mbf{X})^{-1}\mbf{X}^T(\mbf{W}-\mbf{I})\mbf{X}]^{-1}=\mbf{I}-(\mbf{X}^T\mbf{X})^{-1}\mbf{X}^T(\mbf{W}-\mbf{I})\mbf{X}+O_p(1/r^{\delta}).
\end{equation}

\noindent
Note that $(\mbf{X}^T\mbf{X})^{-1}\mbf{X}^T(\mbf{W}-\mbf{I})\mbf{X}$, $(\mbf{X}^T\mbf{X})^{-1}\mbf{X}^T(\mbf{W}-\mbf{I})\mbf{Y}$, and $(\mbf{X}^T\mbf{X})^{-1}\mbf{X}^T(\mbf{W}-\mbf{I})\sbf{e}$ are of the same order since the variances of $\mbf{Y}$ and $\sbf{e}$ are both bounded.
We expand (\ref{eqn:wt:sub:trans}) as follows: 
\begin{eqnarray}
 \tilde{\sbf\beta} &=&\nonumber (\mbf{X}^T\mbf{W}\mbf{X})^{-1}(\mbf{X}^T\mbf{W}\mbf{Y})\\
 &=&\nonumber [\mbf{I}+(\mbf{X}^T\mbf{X})^{-1}\mbf{X}^T(\mbf{W}-\mbf{I})\mbf{X}]^{-1}(\mbf{X}^T\mbf{X})^{-1}(\mbf{X}^T\mbf{W}\mbf{Y})\\
 &=&\label{eqn:talor:inverse} [\mbf{I}-(\mbf{X}^T\mbf{X})^{-1}\mbf{X}^T(\mbf{W}-\mbf{I})\mbf{X}+O_p(1/r^{\delta})](\mbf{X}^T\mbf{X})^{-1}(\mbf{X}^T\mbf{Y}+\mbf{X}^T(\mbf{W}-\mbf{I})\mbf{Y})\\
 &=&\nonumber [\mbf{I}-(\mbf{X}^T\mbf{X})^{-1}\mbf{X}^T(\mbf{W}-\mbf{I})\mbf{X}+O_p(1/r^{\delta})](\hat{\sbf\beta}_{OLS}+(\mbf{X}^T\mbf{X})^{-1}\mbf{X}^T(\mbf{W}-\mbf{I})\mbf{Y})\\
 &=&\nonumber \hat{\sbf\beta}_{OLS}+(\mbf{X}^T\mbf{X})^{-1}\mbf{X}^T(\mbf{W}-\mbf{I})\sbf e+O_p(1/r^{\delta})\\
  &=&\label{eqn:lem1:final} \hat{\sbf\beta}_{OLS}+(\mbf{X}^T\mbf{X})^{-1}\mbf{X}^T\mbf{W}\sbf e+O_p(1/r^{\delta}), 
\end{eqnarray}
where the expansion in (\ref{eqn:talor:inverse}) is by the convergence of geometric series of matrices and the assumption that $\delta>0$, and where the equality in (\ref{eqn:lem1:final}) holds since $\mbf{X}^T\sbf{e}=0$.
\end{proof}

\textbf{Remark.}
Lemma~\ref{lem:1} relates the sampling estimator $\tilde{\sbf\beta}$ to the quantity $\hat{\sbf\beta}_{OLS}$, with an order constraint on the residual term, i.e., $O_p(1/r^{\delta})$. 
In the application of Lemma~\ref{lem:1} to the proof of Theorem~\ref{thm:beta_tilde-beta} (asymptotic normality of $\tilde{\sbf\beta}$ in estimating $\sbf\beta_0$), we subtract $\sbf\beta_0$ from both sides of (\ref{eqn:lem1:final}) to relate $\tilde{\sbf\beta}$ to $\sbf\beta_0$. 
In the proof of Theorem~\ref{thm:beta_tilde-beta_hat}, Lemma~\ref{lem:1} is directly applied (asymptotic normality of $\tilde{\sbf\beta}$ in approximating $\hat{\sbf\beta}_{OLS}$). 

\textbf{Remark.}
The assumption that $\delta>0$ implies that $\rho((\mbf{X}^T\mbf{X})^{-1}\mbf{X}^T(\mbf{W}-\mbf{I})\mbf{X})\rightarrow 0$ as $r\rightarrow\infty$. 
By the convergence of geometric series of matrices, the inverse of $[\mbf{I}+(\mbf{X}^T\mbf{X})^{-1}\mbf{X}^T(\mbf{W}-\mbf{I})\mbf{X}]=\mbf{X}^T\mbf{W}\mbf{X}$ exists and the expansion in ($\ref{eqn:talor:inverse}$) is valid asymptotically. 
In the proof of Theorem~\ref{thm:beta_tilde-beta} and Theorem~\ref{thm:beta_tilde-beta_hat}, we will verify the condition in Lemma~\ref{lem:1}, i.e., that $\delta>0$.
The exact magnitude of $\delta$ depends on $(\mbf W-\mbf I)$, and it is different in Theorem~\ref{thm:beta_tilde-beta} and Theorem~\ref{thm:beta_tilde-beta_hat}.

In  Appendix~\ref{sec:proof:thm:beta_tilde-beta} and Appendix~\ref{sec:proof:thm:beta_tilde-betahat}, we present the proofs of Theorem~\ref{thm:beta_tilde-beta} and Theorem~\ref{thm:beta_tilde-beta_hat}, respectively. 
The proof of Theorem~\ref{thm:beta_tilde-beta} is much more complicated than that of Theorem~\ref{thm:beta_tilde-beta_hat}. 
In conditional inference of Theorem~\ref{thm:beta_tilde-beta_hat}, the data are given and the only randomness comes from sampling. 
However, in unconditional inference of Theorem~\ref{thm:beta_tilde-beta}, we consider both unobserved hypothetical data sampled from the underlying population as well as the sample sampled from observations. 
Thus, one more layer of randomness needs to take into account.

\subsection{Proof of Theorem~\ref{thm:beta_tilde-beta}}
\label{sec:proof:thm:beta_tilde-beta}

We start by establishing several preliminary technical lemmas, and then we will present the main proof of Theorem~\ref{thm:beta_tilde-beta}.

\subsubsection{Preliminary Material for the Proof of Theorem~\ref{thm:beta_tilde-beta}}

To facilitate the proof of Theorem~\ref{thm:beta_tilde-beta}, we first present the Hajek-Sidak CLT, as well as Lemma~\ref{lem:Us} and Lemma~\ref{lem:cond:Us}, as follows. 

\begin{thm}[Hajek-Sidak Central Limit Theorem]
Let $X_1, \ldots, X_n$ be independent and identically distributed (i.i.d.) random variables such that $\E(X_i)=\mu$ and $\vv(X_i)=\sigma^2$ are both finite. 
Define $T_n = d_1X_1+\ldots+d_nX_n$, then 
 \begin{eqnarray}
  \frac{T_n-\mu\sum_{i=1}^nd_i}{\sigma\sqrt{\sum_{i=1}^n d_i^2}}\stackrel{d}{\rightarrow}\text{N}\left( 0,1\right)   ,   && \label{thm:HSCLT}  
 \end{eqnarray}
 whenever the Noether condition,  
 \begin{eqnarray}
  \frac{\max_{1\le i\le n} d_i^2}{\sum_{i=1}^n d_i^2}\to 0\label{eqn:HSCLT:cond},  \quad\text{as }n\to\infty  ,
 \end{eqnarray}
 is satisfied. 
\end{thm}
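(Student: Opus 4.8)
The plan is to deduce the Hajek--Sidak CLT from the Lindeberg--Feller CLT for triangular arrays of row-wise independent random variables. First I would center the variables: setting $Y_i = X_i - \mu$ produces i.i.d. mean-zero variables with $\vv(Y_i)=\sigma^2$, and the normalized statistic becomes
\[
\frac{T_n-\mu\sum_{i=1}^n d_i}{\sigma\sqrt{\sum_{i=1}^n d_i^2}}=\sum_{i=1}^n W_{ni},\qquad W_{ni}:=\frac{d_iY_i}{s_n},\quad s_n^2:=\sigma^2\sum_{i=1}^n d_i^2 .
\]
For each fixed $n$ the summands $W_{n1},\ldots,W_{nn}$ are independent (since the $Y_i$ are), each has mean zero, and $\sum_{i=1}^n \vv(W_{ni})=s_n^{-2}\sigma^2\sum_i d_i^2=1$. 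Thus the array is correctly normalized, and the Lindeberg--Feller theorem reduces the whole problem to verifying the Lindeberg condition $\sum_{i=1}^n \E[W_{ni}^2\,\mathbf{1}\{|W_{ni}|>\varepsilon\}]\to 0$ for every $\varepsilon>0$.

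Second, I would exploit the identical distribution of the $X_i$ to collapse the $n$ individual tail expectations into one. Writing $Y$ for a generic copy of the $Y_i$,
\[
\sum_{i=1}^n \E\!\left[W_{ni}^2\,\mathbf{1}\{|W_{ni}|>\varepsilon\}\right]=\frac{1}{s_n^2}\sum_{i=1}^n d_i^2\,\E\!\left[Y^2\,\mathbf{1}\{|d_iY|>\varepsilon s_n\}\right].
\]
The key step is to bound the truncation threshold from below uniformly in $i$: since $|d_i|\le\max_j|d_j|$, the event $\{|d_iY|>\varepsilon s_n\}$ is contained in $\{|Y|>c_n\}$, where $c_n:=\varepsilon s_n/\max_j|d_j|=\varepsilon\sigma\,(\sum_j d_j^2)^{1/2}/\max_j|d_j|$. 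The Noether condition $\max_j d_j^2/\sum_j d_j^2\to 0$ is precisely the statement that $c_n\to\infty$.

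Third, combining these bounds,
\[
\sum_{i=1}^n \E\!\left[W_{ni}^2\,\mathbf{1}\{|W_{ni}|>\varepsilon\}\right]\le\frac{\sum_i d_i^2}{s_n^2}\,\E\!\left[Y^2\,\mathbf{1}\{|Y|>c_n\}\right]=\frac{1}{\sigma^2}\,\E\!\left[Y^2\,\mathbf{1}\{|Y|>c_n\}\right].
\]
Because $\E[Y^2]=\sigma^2<\infty$ and $c_n\to\infty$, dominated convergence forces $\E[Y^2\,\mathbf{1}\{|Y|>c_n\}]\to 0$. Hence the Lindeberg condition holds, and the Lindeberg--Feller CLT gives $\sum_{i=1}^n W_{ni}\stackrel{d}{\rightarrow}\text{N}(0,1)$, which is the claim.

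The main obstacle is the third ingredient of the second paragraph, namely establishing that the truncation level $c_n$ diverges; this is the single point at which the Noether condition is used, and it is what prevents any single heavily weighted summand from dominating the sum. I would also emphasize that the i.i.d. hypothesis is doing real work: it is what allows the replacement of the $n$ distinct tail integrals by the single quantity $\E[Y^2\,\mathbf{1}\{|Y|>c_n\}]$. Without identical distribution one would have to verify the full Lindeberg condition directly and would need a correspondingly stronger assumption on the weights.
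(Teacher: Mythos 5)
Your proof is correct. Note, however, that the paper itself offers no proof of this statement: the Hajek--Sidak CLT appears there purely as quoted background material (a classical result, stated in the preliminaries and invoked in the proofs of Lemma~2 and Theorem~3), so there is no in-paper argument to compare against. Your derivation via the Lindeberg--Feller theorem for triangular arrays is the standard textbook route, and it is complete: the centering and normalization give $\sum_i \vv(W_{ni})=1$, the i.i.d.\ hypothesis correctly collapses the $n$ tail integrals into the single quantity $\E[Y^2\,\mathbf{1}\{|Y|>c_n\}]$, and you correctly isolate the one place where Noether's condition does work, namely forcing the truncation level $c_n=\varepsilon\sigma\bigl(\sum_j d_j^2\bigr)^{1/2}/\max_j|d_j|$ to diverge, after which dominated convergence finishes the Lindeberg verification. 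Two pedantic remarks, neither a gap: the statement implicitly assumes $\sigma>0$ and $\sum_i d_i^2>0$ (otherwise the normalized ratio is undefined), and the weights should really be read as a triangular array $d_{ni}$ since they may change with $n$ --- as they do in the paper's application, where $d_i$ involves $(\mbf{X}^T\mbf{X})^{-1}\mbf{x}_i$ and the sampling probabilities --- but your Lindeberg--Feller framing already accommodates exactly this, so no change to the argument is needed.
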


\textbf{Remark.}
The Hajek-Sidak CLT 
is used in the proof of Lemma~\ref{lem:Us}.

\begin{lem}\label{lem:Us}
Define $ \mbf{U}=\text{diag}(U_1, \ldots, U_n)$ where independent random variables $U_i {\sim} \mbox{Poisson}(r\pi_i)$, for $i=1,\ldots, n$, and $\sbf\varepsilon=(\varepsilon_1,\ldots, \varepsilon_n)^T$, where $\varepsilon_i$s are independently and identically distributed with mean 0 and variance $\sigma^2$. 
If conditions  (A1) and (A2) in Theorem~\ref{thm:beta_tilde-beta} hold, 
then as $n\to\infty$,
\begin{eqnarray}\label{eqn:uconvU}
 \mbf{\Sigma}_{0}^{-\frac 12}(\mbf{X}^T\mbf{X})^{-1}\mbf{X}^T\mbf{\Omega} \mbf{U}\sbf\varepsilon&\stackrel{d}{\rightarrow}&\textbf{N}(\sbf 0, \mbf{I}_p), 
 \end{eqnarray}
 where $\sbf \Sigma_0$  and $\mbf{\Omega}$ are defined in Theorem~\ref{thm:beta_tilde-beta}.
\end{lem}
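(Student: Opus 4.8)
The plan is to reduce the multivariate statement to a scalar one via the Cramer-Wold device and then, after conditioning on the Poisson weights, to recognize the scalar quantity as a weighted sum of i.i.d.\ errors to which the Hajek-Sidak CLT applies. Fix $\sbf b\in\mathbb R^p$ and write $\mbf A=(\mbf X^T\mbf X)^{-1}$, $\sbf c=\mbf\Sigma_0^{-1/2}\sbf b$, and $a_i=\sbf c^T\mbf A\mbf x_i$. A direct covariance computation for the vector $\mbf\Sigma_0^{-1/2}\mbf A\mbf X^T\mbf\Omega\mbf U\sbf\varepsilon$, using independence of $\mbf U$ and $\sbf\varepsilon$ together with $\E[U_i^2]=r\pi_i+(r\pi_i)^2$, gives $\sigma^2\mbf\Sigma_0^{-1/2}\mbf A\mbf X^T(\mbf I_n+\mbf\Omega)\mbf X\mbf A\mbf\Sigma_0^{-1/2}=\sigma^2\mbf I_p$ (so that the further $\sigma^{-1}$ in the $(\sigma^2\mbf\Sigma_0)^{-1/2}$ normalization of Theorem~\ref{thm:beta_tilde-beta} produces $\mbf I_p$). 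Hence by Cramer-Wold it suffices to prove, for each $\sbf b$,
\begin{equation}
S_n:=\sum_{i=1}^n a_i\,\frac{U_i}{r\pi_i}\,\varepsilon_i \stackrel{d}{\rightarrow}\textbf N\!\left(0,\ \sigma^2\|\sbf b\|^2\right).
\end{equation}

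The central device is to condition on $\mbf U$. Since $\sbf\varepsilon$ is independent of $\mbf U$ and the $\varepsilon_i$ are i.i.d.\ with mean $0$ and variance $\sigma^2$, conditionally on $\mbf U$ the sum $S_n=\sum_i d_i\varepsilon_i$ has deterministic weights $d_i=a_iU_i/(r\pi_i)$, which is exactly the setting of the Hajek-Sidak CLT (this is also why only two moments of $\varepsilon$ are needed). Thus, on the event that the Noether condition $\max_i d_i^2/\sum_i d_i^2\to 0$ holds, $S_n/\sqrt{\sigma^2\sum_i d_i^2}\stackrel{d}{\rightarrow}\textbf N(0,1)$ conditionally on $\mbf U$. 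Because this conditional limit does not depend on $\mbf U$, bounded convergence applied to the conditional characteristic functions promotes it to the unconditional statement $S_n/\sqrt{\sigma^2\sum_i d_i^2}\stackrel{d}{\rightarrow}\textbf N(0,1)$, and Slutsky's theorem combined with $\sum_i d_i^2\stackrel{p}{\rightarrow}\|\sbf b\|^2$ then yields the displayed limit. Here $\E\big[\sum_i d_i^2\big]=\sum_i a_i^2(1+1/(r\pi_i))=\sbf c^T\mbf\Sigma_0\sbf c=\|\sbf b\|^2$ exactly, so only the concentration of $\sum_i d_i^2$ about its mean and the Noether condition remain.

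The hard part will be these last two probabilistic estimates under the comparatively weak sampling condition (A2). Conditions (A1)--(A2) enter through the bound $a_i^2=O(h_{ii})$ --- obtained from $\|\mbf A\mbf x_i\|^2\le h_{ii}/(bn)$ and $\|\sbf c\|^2=O(n)$, both consequences of A1 --- together with $\max_i h_{ii}\to 0$ and the lower bound $r\pi_i\ge r\pi_{min}=n^{1-\alpha-\gamma_0}>n^{-1}$ furnished by $\gamma_0+\alpha<2$. The subtlety is that a Chebyshev bound on $\sum_i d_i^2$ is useless: a small-probability index (with $r\pi_i$ close to $n^{-1}$) that happens to be sampled contributes a summand of order $a_i^2/(r\pi_i)^2$, so $\mathrm{Var}(\sum_i d_i^2)$ may diverge even though such events occur only with probability $\approx r\pi_i$. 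I would therefore establish $\sum_i d_i^2\stackrel{p}{\rightarrow}\|\sbf b\|^2$ and $\max_i d_i^2\stackrel{p}{\rightarrow}0$ by truncation: bound the tail contribution $\sum_i P(U_i/(r\pi_i)\ge t)$ by Poisson moment/Chernoff estimates and control the truncated remainder in $L^1$. Verifying that this bookkeeping closes precisely under $\gamma_0+\alpha<2$ --- so that the rare large-weight events are summably negligible --- is the crux of the argument.
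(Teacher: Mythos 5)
Your Cramer--Wold reduction and variance algebra are correct, and your architecture---condition on $\mbf U$, apply Hajek--Sidak to the i.i.d.\ $\varepsilon_i$ with weights $d_i=a_iU_i/(r\pi_i)$, then de-condition through the conditional characteristic functions---is a genuinely different route from the paper's. The paper never conditions: it absorbs the Poisson variables into standardized summands $\zeta_i=U_i\varepsilon_i/\sqrt{r\pi_i+r^2\pi_i^2}$ (independent, mean $0$, variance $\sigma^2$), so that the weights $d_i=\sbf b^T(\mbf X^T\mbf X)^{-1}\mbf x_i\sqrt{r\pi_i+r^2\pi_i^2}/(r\pi_i)$ are \emph{deterministic}, and invokes Hajek--Sidak once, unconditionally; the only thing it then checks is the deterministic Noether ratio $\max_i d_i^2/\sum_i d_i^2\rightarrow0$, via $d_i^2\le(1+1/(r\pi_{min}))\,\sbf a^T\sbf a\,M_x$, $\sum_i d_i^2\ge n(1+1/(r\pi_{max}))\lambda_{min}\,\sbf a^T\sbf a$, and $nr\pi_{min}\rightarrow\infty$ from (A2). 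No law of large numbers for any random quantity is needed there. By conditioning, you have relocated all of the difficulty into the statement $\sum_i d_i^2\stackrel{p}{\rightarrow}\|\sbf b\|^2$ (plus the conditional Noether condition in probability), and that is exactly the step your proposal does not carry out: you correctly diagnose that Chebyshev fails because of the rare large-weight events, but the truncation argument is only announced (``the crux''), never executed. As written, the proof is incomplete at its central point. (A side error: $\max_i h_{ii}\rightarrow0$ is \emph{not} a consequence of (A1); a single row with $\|\mbf x_{i_0}\|^2\asymp n$ is compatible with (A1) and has $h_{i_0i_0}\asymp1$.)

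The deeper problem is that the missing step cannot be closed under (A1)--(A2) alone, so no bookkeeping will make it ``close precisely under $\gamma_0+\alpha<2$.'' Take $p=1$, $x_i\equiv1$ (so (A1) holds, even with $M_x=1$), $\alpha=0.2$, $r=n^{0.8}$, and give $m=n^{0.4}$ indices probability $\pi_i=n^{-1.6}$ (so $\gamma_0=1.6$, $\gamma_0+\alpha=1.8<2$), spreading the remaining mass uniformly, $\pi_i\approx1/n$. The small-probability indices have $r\pi_i=n^{-0.8}$ and contribute \emph{half} of $\sum_i a_i^2(1+1/(r\pi_i))=\|\sbf b\|^2$, yet with probability $e^{-n^{-0.4}}\rightarrow1$ none of them is sampled; hence $\sum_i d_i^2\stackrel{p}{\rightarrow}\|\sbf b\|^2/2$. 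Your identity $\mathrm{E}\bigl[\sum_i d_i^2\bigr]=\|\sbf b\|^2$ is exact, but the mean sits on vanishing-probability events and is not attained in probability. Notably, in this configuration your (sound) conditional argument shows the normalized statistic converges to $N(0,1/2)$---i.e., the lemma's conclusion itself fails---even though the paper's Noether check passes: the paper applies a CLT stated for i.i.d.\ summands to the non-identically distributed $\zeta_i$, and the Noether condition on the $d_i$ does not substitute for the Lindeberg condition, which fails precisely when indices with $r\pi_i\rightarrow0$ carry a non-negligible share of $\sbf b^T\mbf\Sigma_0\sbf b$. So your instinct about where the difficulty lives is exactly right; but to finish your route (or the paper's) one needs an extra hypothesis of the form ``for every fixed $K$, the indices with $r\pi_i\le K$ contribute $o(1)$ to $\sbf b^T\mbf\Sigma_0\sbf b$'' (note $r\pi_{min}\rightarrow\infty$ is unavailable, since $\gamma_0\ge1$ forces $r\pi_{min}\lesssim n^{-\alpha}$). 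Under that hypothesis your truncation does close: the bounded-$r\pi_i$ block is negligible in $L^1$, and on the block $r\pi_i>K$ one has $\mathrm{Var}\bigl(\sum_i d_i^2\bigr)\le C_K\max_j\bigl[a_j^2(1+1/(r\pi_j))\bigr]\cdot\|\sbf b\|^2$, so Chebyshev applies once the Noether-type ratio tends to zero.
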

\begin{proof}

We derive the asymptotic normality of the random vector $(\mbf{X}^T\mbf{X})^{-1}\mbf{X}^T\mbf{\Omega} \mbf{U}\sbf\varepsilon$ using the Cramer-Wold device. 
For any nonzero constant vector $\sbf b\in \mathbb{R}^p$, 
we write
 \begin{equation}\label{eqn:cram:wold}
 \sbf b^T(\mbf{X}^T\mbf{X})^{-1}\mbf{X}^T\mbf{\Omega} \mbf{U}\sbf\varepsilon=\sum_{i=1}^n d_i \zeta_i,
 \end{equation}
  where $d_i = \sbf b^T(\mbf{X}^T\mbf{X})^{-1}\mbf x_i\frac{\sqrt{r\pi_i+r^2\pi_i^2}}{r\pi_i}$ and $\zeta_i=U_i \varepsilon_i/\sqrt{r\pi_i+r^2\pi_i^2}$, E$(\zeta_i)=0$, and Var$(\zeta_i)=\sigma^2$.  
  
Since Eqn. (\ref{eqn:cram:wold}) is a weighted average of independent random variables $\zeta_i$, it suffices to verify the Noether condition (\ref{eqn:HSCLT:cond}) of Hajek-Sidak CLT to show the asymptotic normality of $ \sbf b^T(\mbf{X}^T\mbf{X})^{-1}\mbf{X}^T\mbf{\Omega} \mbf{U}\sbf\varepsilon$.
For $d_i^2$, we have
  \begin{equation}\label{eqn:dmax}
  d_i^2 \le  \left(1+\frac{1}{r\pi_{min}}\right)(\sbf a^T \mbf x_i)^2   \le  \left(1+\frac{1}{r\pi_{min}}\right)\sbf a^T\sbf aM_x, 
  \end{equation}
where $\sbf a = (\mbf{X}^T\mbf{X})^{-1}\sbf b$, $M_x=\max\{\mbf x_i^T\mbf x_i\}_{i=1}^n$, and the last inequality is derived using the Cauchy-Schwarz inequality. 
Thus, $\max_{1\le i\le n}d_i^2\le (1+\frac{1}{r\pi_{min}})\sbf a^T\sbf aM_x$. 
For $\sum_{i=1}^n d_i^2$, we have
  \begin{eqnarray}\label{eqn:sumd} 
   \sum_{i=1}^n d_i^2 = \sum_{i=1}^n (1+\frac{1}{r\pi_i})\sbf a^T \mbf x_i
  \sbf a^T \mbf x_i  \ge  (1+\frac{1}{r\pi_{max}}) \sbf a^T \mbf{X}^T \mbf{X}\sbf a \ge  (n+\frac{n}{r\pi_{max}})\lambda_{min}\sbf a^T \sbf a,
  \end{eqnarray}
 where  $\lambda_{min}$ is the minimum eigenvalue of $\mbf{X}^T\mbf{X}/n$.   Combining (\ref{eqn:dmax}) and ($\ref{eqn:sumd}$), we have
\begin{eqnarray}
   \lim_{n\rightarrow\infty}\frac{\max_{1\le i\le n} d_i^2}{\sum_{i=1}^n d_i^2} \le\label{eqn:useMx} \lim_{n\rightarrow\infty}\frac{(1+\frac{1}{r\pi_{min}})M_x} {(n+\frac{n}{r\pi_{max}})\lambda_{min}}
   \le\frac{M_x}{\lambda_{min}}\lim_{n\rightarrow\infty}\frac{1+r\pi_{min}} {(nr\pi_{min}+\frac{n\pi_{min}}{\pi_{max}})}=0, 
  \end{eqnarray}
  where the last equality is obtained since condition (A2) 
  implies $nr\pi_{min}\rightarrow\infty$ as $n\rightarrow\infty$.
 Since
 \begin{equation*}
 \sum_{i=1}^n \vv(d_i\zeta_i) =\sigma^2 \sum_{i=1}^n (\sbf a^T\mbf x_i )^2(1+\frac{1}{r\pi_i})= \sigma^2\sbf a^T\mbf{X}^T(\mbf{I}_p+\mbf{\Omega})\mbf{X}\sbf a, 
 \end{equation*}
 by the Cramer-Wold device,  the proof is thus complete.

\end{proof}

In the following statement and proof of Lemma~\ref{lem:cond:Us}, as well as in the proof of Theorem~\ref{thm:beta_tilde-beta} below, we use $A|B$ to denote 
   random variable $A$ given random variable $B$. 
   
 \begin{lem}\label{lem:cond:Us}
 Given any nonzero constant vector $\sbf b\in \mathbb{R}^p$, 
 as $n\to\infty$
 we have
 \begin{eqnarray}
  (\sbf b^T\mbf{\Sigma}_{0}\sbf b)^{-\frac 12}\sbf b^T(\mbf{X}^T\mbf{X})^{-1}\mbf{X}^T\mbf{\Omega} \mbf{U}\sbf\varepsilon|\sum_{i=1}^nU_i=r&\stackrel{d}{\rightarrow}&
  N(0,1),
  \end{eqnarray}
  where $\sbf\Omega$, $\mbf U$, and $\sbf\Sigma_0$ are defined  in Lemma~\ref{lem:Us}.
\end{lem}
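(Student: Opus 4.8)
The plan is to exploit the Poissonization identity: since the $U_i$ are independent $\mbox{Poisson}(r\pi_i)$ variables, the conditional law of $(U_1,\ldots,U_n)$ given $\sum_i U_i = r$ is exactly the multinomial law $\mbox{Mult}(r,\{\pi_i\})$ that governs the sampling counts $K_i$. Thus Lemma~\ref{lem:cond:Us} is the conditional counterpart of Lemma~\ref{lem:Us}, and the real content is that the central limit theorem survives conditioning on $\{T=r\}$, where $T:=\sum_{i=1}^n U_i$. Writing $W:=\sbf b^T(\mbf X^T\mbf X)^{-1}\mbf X^T\mbf\Omega\mbf U\sbf\varepsilon=\sum_{i=1}^n d_i\zeta_i$ as in the proof of Lemma~\ref{lem:Us}, the key structural observation is that $W$ and $T$ are \emph{uncorrelated}: since the $\varepsilon_i$ are mean-zero and independent of all the $U_j$, one has $\mbox{Cov}(d_i\zeta_i,U_j)=0$ for every $i,j$, hence $\mbox{Cov}(W,T)=0$. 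I would therefore aim to prove that $(W,T)$ is jointly asymptotically normal, so that $W$ and $T$ become asymptotically independent, and then argue that conditioning on $T=r$ (which is its mean) leaves the limiting law of $W$ unchanged.

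First I would establish the joint asymptotic normality of $(W,T)$ via the Cramer-Wold device. For arbitrary constants $\alpha,\gamma$, the combination $\alpha W+\gamma T=\sum_{i=1}^n U_i(\alpha c_i\varepsilon_i+\gamma)$, where $c_i=\sbf a^T\mbf x_i/(r\pi_i)$ and $\sbf a=(\mbf X^T\mbf X)^{-1}\sbf b$, is a sum of the \emph{independent} summands $U_i(\alpha c_i\varepsilon_i+\gamma)$. Asymptotic normality of this sum follows from a Lyapunov/Lindeberg CLT for independent, non-identically-distributed summands, whose negligibility requirement is of the same Noether type already verified in (\ref{eqn:useMx}) using condition (A2) (which forces $nr\pi_{min}\to\infty$); the pure $\gamma T$ part is a standardized Poisson sum that is classically normal. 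Together with $\mbox{Cov}(W,T)=0$, this yields asymptotic independence of $W$ and $T$.

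The remaining, and most delicate, step is to pass from weak convergence of $(W,T)$ to convergence of the conditional law of $W$ given the single-point event $\{T=r\}$, whose probability tends to $0$ at rate $(2\pi r)^{-1/2}$. This cannot be read off from weak convergence alone; it requires local-limit control on the conditioning variable. I would invoke the conditional central limit theorems of \citet{steck1957condlimit} and \citet{cltformult1975}: since $T\sim\mbox{Poisson}(r)$ is a lattice variable obeying a local limit theorem, $(W,T)$ is jointly asymptotically normal, and the conditioning value $r$ equals $\E(T)$, their results give that $W\mid T=r$ is asymptotically normal with variance $(1-\rho^2)\mbox{Var}(W)$, where $\rho$ is the asymptotic correlation; as $\rho=0$ here, the conditional limit coincides with the unconditional limit of Lemma~\ref{lem:Us}. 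As a direct consistency check, using $\E(K_i^2)=r\pi_i(1-\pi_i)+r^2\pi_i^2$ for the multinomial counts one computes $\mbox{Var}(W\mid T=r)=\sigma^2\sbf b^T\mbf\Sigma_0\sbf b-\frac{\sigma^2}{r}\sbf b^T(\mbf X^T\mbf X)^{-1}\sbf b$, so the correction is of order $(rn)^{-1}$ and negligible relative to $\sbf b^T\mbf\Sigma_0\sbf b$; hence $\mbox{Var}(W\mid T=r)/\mbox{Var}(W)\to 1$ and, after the same standardization by $(\sbf b^T\mbf\Sigma_0\sbf b)^{-1/2}$, the stated limit follows.

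The main obstacle is precisely this last transfer: weak convergence of $(W,T)$ does not by itself control behaviour on the vanishing-probability slice $\{T=r\}$, and one must supply a local limit theorem for $T$ together with a joint Cramer-type argument (as packaged in \citet{steck1957condlimit,cltformult1975}) to legitimize the conditioning. Verifying the hypotheses of those conditional CLTs — joint asymptotic normality with a nondegenerate limit, exact uncorrelatedness, and the lattice local limit for $T$ — under conditions (A1)--(A2) is where the technical work concentrates.
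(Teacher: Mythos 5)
Your architecture coincides with the paper's: show $\text{Cov}(W,T)=0$ using that the $\varepsilon_i$ are mean-zero and independent of the $U_j$, establish joint asymptotic normality of $(W,T)$ (your explicit Cramer--Wold reduction to the independent summands $U_i(\alpha c_i\varepsilon_i+\gamma)$ is, if anything, spelled out more carefully than in the paper, which asserts the bivariate limit directly from Lemma~\ref{lem:Us}), and then invoke conditional limit theory to condition on $\{T=r\}$. The genuine gap is in this last step, which you correctly flag as the delicate one but then outsource entirely. You claim the Steck/Morris results apply once one has ``joint asymptotic normality with a nondegenerate limit, exact uncorrelatedness, and the lattice local limit for $T$.'' That is not the hypothesis actually needed, and a marginal local limit theorem for $T\sim\text{Poisson}(r)$ is insufficient: since the event $\{T=r\}$ has vanishing probability, one must control how the conditional law of $W$ given $T=t_n$ varies with the conditioning value $t_n$, i.e., one needs the \emph{uniform equicontinuity of the conditional characteristic functions} $\psi_n(t_n;t)=\E\left(\exp(\mathrm{i}tQ_n)\mid \sum_{i=1}^n U_i=t_n\right)$, which is the sufficient condition of \citet{steck1957condlimit} that the paper cites. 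Nothing in your sketch establishes this property, and it does not follow from the three conditions you list; your appeal to a ``joint Cramer-type argument'' with limiting conditional variance $(1-\rho^2)\mathrm{Var}(W)$ presupposes exactly the transfer result that is in question.

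The paper closes precisely this hole with a concrete computation, which is the real content of its proof and which your proposal would need to supply. Conditionally on $\sum_{i=1}^n U_i=r+\sqrt{r}l_n+\sqrt{r}h$, the count vector is distributed as $\mbf M+\mbf R$ with independent $\mbf M\sim\text{Mult}(h\sqrt{r},\{\pi_i\}_{i=1}^n)$ and $\mbf R\sim\text{Mult}(r+\sqrt{r}l_n,\{\pi_i\}_{i=1}^n)$, so that by Jensen's inequality and the elementary bound $|e^{\mathrm{i}a}-e^{\mathrm{i}b}|\le|a-b|$,
\begin{equation*}
|\psi_n(l_n+h;t)-\psi_n(l_n;t)|
\;\le\;
|t/s_n|\,\E\left(\left|\sbf b^T(\mbf{X}^T\mbf{X})^{-1}\mbf{X}^T\mbf{\Omega}\mbf{M}\sbf\varepsilon\right|\right)
\;\rightarrow\;0
\quad\text{as } h\to 0,
\end{equation*}
which is the required equicontinuity (this is a simplified version of the argument in \citet{cltformult1975}). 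Your closing variance check, $\mathrm{Var}(W\mid T=r)=\sigma^2\sbf b^T\mbf{\Sigma}_0\sbf b-\frac{\sigma^2}{r}\sbf b^T(\mbf{X}^T\mbf{X})^{-1}\sbf b$, is correct and a useful sanity check that the conditional and unconditional scalings agree asymptotically, but moment calculations on the multinomial slice do not substitute for the distributional transfer argument: until the equicontinuity (or an equivalent joint local-limit estimate) is verified, the passage from the unconditional limit of Lemma~\ref{lem:Us} to the conditional statement of the lemma remains unproved.
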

\begin{proof}
 
 For $i=1,\ldots,n$, we have 
 \begin{equation}
 \text{Cov}(\sbf b^T(\mbf{X}^T\mbf{X})^{-1}\mbf{X}^T\mbf{\Omega} U_i\varepsilon_i, \sum_{i=1}^nU_i)=
 \sum_{i=1}^n\sbf b^T(\mbf{X}^T\mbf{X})^{-1}\mbf{X}^T\mbf{\Omega}\text{Cov}( U_i\varepsilon_i, U_i)=0, 
 \end{equation}
 and thus we have
\begin{equation}
  \text{Cov}(\sbf b^T(\mbf{X}^T\mbf{X})^{-1}\mbf{X}^T\mbf{\Omega} \mbf{U}\sbf\varepsilon, \sum_{i=1}^nU_i) =\nonumber  0. 
\end{equation}
 By Lemma~\ref{lem:Us}, we have 
 \begin{eqnarray}
 \left(
  \begin{matrix}
  (\sbf b^T\mbf{\Sigma}_{0}\sbf b)^{-\frac 12}\sbf b^T(\mbf{X}^T\mbf{X})^{-1}\mbf{X}^T\mbf{\Omega} \mbf{U}\sbf\varepsilon\\
  \frac{1}{\sqrt{r}}(\sum_{i=1}^nU_i-r)
  \end{matrix}
  \right)
  &\stackrel{d}{\rightarrow}&\label{eqn:Ue:U:asym}
  \textbf{N}\left(
   \left(
  \begin{matrix}
  0\\
  0
  \end{matrix}
  \right),
  \left(
  \begin{matrix}
   1
   & 0\\
    0& 1
  \end{matrix}
  \right)
  \right). 
 \end{eqnarray}
Furthermore, we have 
  \begin{eqnarray}
 (\sbf b^T\mbf{\Sigma}_{0}\sbf b)^{-\frac 12} \sbf b^T(\mbf{X}^T\mbf{X})^{-1}\mbf{X}^T\mbf{\Omega} \mbf{U}\sbf\varepsilon|\sum_{i=1}^nU_i=r&\stackrel{d}{\rightarrow}&\label{eqn:dist:joint:u:sumu} N(0,1),
  \end{eqnarray}
  provided we can show the convergence of conditional distributions is the uniform equicontinuity of conditional characteristic functions \citep{steck1957condlimit}, as we do below. 

  Here, for the ease of notation, we  define $Q_n=\sbf b^T(\mbf{X}^T\mbf{X})^{-1}\mbf{X}^T\mbf{\Omega} \mbf{U}\sbf\varepsilon$, $L_n = \frac{1}{\sqrt{r}}(\sum_{i=1}^nU_i-r)$, and $s_n^2=\sbf b^T\mbf{\Sigma}_{0}\sbf b$. 
  Let 
  $$
  \psi_{n}(t_n; t)=\E(\exp(\text{i}tQ_n|\sum_{i=1}^n U_n=t_n))  ,
  $$
  where $\text{i}$ denotes the imaginary unit. 
  Hence, we aim to show the uniform equicontinuity of $\psi_{n}(t_n; t)$. 
  When $L_n=l_n$, $\sum_{i=1}^nU_i=r+\sqrt{r}l_n$; when $L_n=l_n+h$, $\sum_{i=1}^nU_i=r+\sqrt{r}l_n+\sqrt{r}h$. 
  Note~that 
  $$
  (Q_n|L_n=l_n+h)=\sbf b^T(\mbf{X}^T\mbf{X})^{-1}\mbf{X}^T\mbf{\Omega} (\mbf{M}+\mbf{R})\sbf\varepsilon,
  $$
  where $\mbf{M}=\text{diag}\{M_i\}_{i=1}^n$,  $(M_1,\ldots, M_n)\sim \text{Mult}(h\sqrt{r}, (\pi_1,\ldots, \pi_n))$, $\mbf{R}=\text{diag}\{R_i\}_{i=1}^n$, and $(R_1,\ldots, R_n)\sim \text{Mult}(r+\sqrt{r}l_n, (\pi_1,\ldots, \pi_n))$. 
Thus, we have that
  \begin{eqnarray}
   \nonumber
   && \hspace{-15mm} |\psi_n(l_n+h;t)-\psi_n(l_n;t)|\\
   &=&         |\E\left(\exp(\text{i}t/s_n\sbf b^T(\mbf{X}^T\mbf{X})^{-1}\mbf{X}^T\mbf{\Omega}(\mbf{M}+\mbf{R})\sbf\varepsilon)\right)-\E\left(\exp(\text{i}t/s_n\sbf b^T(\mbf{X}^T\mbf{X})^{-1}\mbf{X}^T\mbf{\Omega} \mbf{R}\sbf\varepsilon)\right)|\\
   &\le&\label{eqn:charac:1} \E\left(|\exp(\text{i}t/s_n\sbf b^T(\mbf{X}^T\mbf{X})^{-1}\mbf{X}^T\mbf{\Omega}(\mbf{M}+\mbf{R})\sbf\varepsilon)-\exp(\text{i}t/s_n\sbf b^T(\mbf{X}^T\mbf{X})^{-1}\mbf{X}^T\mbf{\Omega} \mbf{R}\sbf\varepsilon|)\right)\\
   &\le&\label{eqn:charac:2} |t/s_n|\E\left(|\sbf b^T(\mbf{X}^T\mbf{X})^{-1}\mbf{X}^T\mbf{\Omega}(\mbf{M}+\mbf{R})\sbf\varepsilon-\sbf b^T(\mbf{X}^T\mbf{X})^{-1}\mbf{X}^T\mbf{\Omega} \mbf{R}\sbf\varepsilon|\right)\\
   &=&\label{eqn:charac:3}(t/s_n)\E\left(|\sbf b^T(\mbf{X}^T\mbf{X})^{-1}\mbf{X}^T\mbf{\Omega}\mbf{M}\sbf\varepsilon|\right)\\
   &\rightarrow&\label{eqn:char_goto0}0\quad\quad\text{ as } h\to 0,  
  \end{eqnarray}  
  where ($\ref{eqn:charac:1}$) is by Jensen's inequality, and ($\ref{eqn:charac:2}$) is by the fact that $|e^{\text{i}a}-e^{\text{i}b}|=\sqrt{2(1-cos(\frac{a-b}{2}))}=2|\sin(\frac{a-b}{2})|\le |a-b|$, for any $a$, $b$.
  Thus, the uniform equicontinuity of conditional characteristic function is verified, and the proof is complete. 

\end{proof}

\textbf{Remark.}
The proof of Lemma~\ref{lem:cond:Us} is a simplified version of the proof of Theorem 2.1 in \citet{cltformult1975}. 

\textbf{Remark.}
The key difference between Lemma~\ref{lem:Us} and Lemma~\ref{lem:cond:Us} is that we consider a conditional distribution in Lemma~\ref{lem:cond:Us}, whereas we consider an unconditional distribution in Lemma~\ref{lem:Us}. 

We will also need the following lemma, the proof of which can be found in Section 6.4 of \citet{sheldon2006first}.

\begin{lem}
\label{lem:mult:pois}
If independent random variables $U_i\sim \text{Poisson}(\lambda_i)$, $i=1,\ldots, n$, then 
\begin{equation}\nonumber
(U_1, \ldots, U_n)|\sum_{i=1}^nU_i=r \sim \text{Mult}\left(r, \left\{\frac{ \lambda_i}{\sum_{i=1}^n \lambda_i}\right\}_{i=1}^n \right).
\end{equation} 

\end{lem}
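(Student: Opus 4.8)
The plan is to establish the result by a direct computation of the conditional probability mass function. First I would invoke the standard closure property of the Poisson family under convolution: since the $U_i$ are independent with $U_i\sim\text{Poisson}(\lambda_i)$, their sum satisfies $S:=\sum_{i=1}^n U_i\sim\text{Poisson}(\Lambda)$, where $\Lambda:=\sum_{i=1}^n\lambda_i$. This is seen most cleanly via probability generating functions, since the generating function of $S$ is the product $\prod_{i=1}^n \exp(\lambda_i(z-1))=\exp(\Lambda(z-1))$, which is the generating function of a $\text{Poisson}(\Lambda)$ variable.

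Next, fix nonnegative integers $k_1,\ldots,k_n$ with $\sum_{i=1}^n k_i=r$. The key observation is that on this support the event $\{U_1=k_1,\ldots,U_n=k_n\}$ is automatically contained in $\{S=r\}$, so conditioning introduces no extra intersection and the conditional mass function is simply the ratio of the joint mass function of $(U_1,\ldots,U_n)$ to $P(S=r)$. Using independence in the numerator and the Poisson form of $S$ in the denominator, I would compute
\begin{equation}\nonumber
P\!\left(U_1=k_1,\ldots,U_n=k_n \,\Big|\, S=r\right)=\frac{\prod_{i=1}^n e^{-\lambda_i}\lambda_i^{k_i}/k_i!}{e^{-\Lambda}\Lambda^r/r!}.
\end{equation}

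Finally I would simplify: the factor $e^{-\Lambda}=\prod_i e^{-\lambda_i}$ cancels, and using the constraint $\sum_i k_i=r$ to rewrite $\Lambda^{-r}\prod_i\lambda_i^{k_i}=\prod_{i=1}^n(\lambda_i/\Lambda)^{k_i}$ reduces the expression to $\frac{r!}{k_1!\cdots k_n!}\prod_{i=1}^n(\lambda_i/\Lambda)^{k_i}$, which is exactly the multinomial probability mass function with total count $r$ and category probabilities $\lambda_i/\Lambda$. This is the claimed distribution. I do not expect a substantive obstacle here, since the result is routine; the only points requiring care are correctly accounting for the support constraint $\sum_i k_i = r$ when forming the conditional mass function, and the algebraic rearrangement that turns the ratio of Poisson masses into multinomial form.
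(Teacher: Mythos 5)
Your proof is correct and complete: the convolution fact $S\sim\text{Poisson}(\Lambda)$, the observation that the event $\{U_1=k_1,\ldots,U_n=k_n\}$ with $\sum_i k_i=r$ is contained in $\{S=r\}$, and the cancellation yielding the multinomial mass function are exactly the standard argument. The paper does not prove this lemma itself but cites Section 6.4 of \citet{sheldon2006first}, where the same direct computation is given, so your approach matches the intended one.
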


\subsubsection{Main Part of the Proof of Theorem~\ref{thm:beta_tilde-beta}}

We first verify the condition that $\delta>0$ in Lemma \ref{lem:1}.
To do this, we derive the magnitude of $\delta$ in Eqn.~(\ref{eqn:delta:lev}), under the conditions of Lemma \ref{lem:1}.
Note~that
\begin{equation*}
(\mbf{X}^T\mbf{X})^{-1}\mbf{X}^T(\mbf{W}-\mbf{I})\mbf{X}= (\mbf{X}^T\mbf{X}/n)^{-1}\mbf{X}^T(\mbf{W}-\mbf{I})\mbf{X}/n,
\end{equation*}
where the order of $(\mbf{X}^T\mbf{X}/n)^{-1}$ is $O(1)$ by Condition (A1). 
Thus, the order of $(\mbf{X}^T\mbf{X})^{-1}\mbf{X}^T(\mbf{W}-\mbf{I})\mbf{X}$ depends on that of $\mbf{X}^T(\mbf{W}-\mbf{I})\mbf{X}/n$. 
We next derive the order of the $(s,t)^{th}$ element of $\mbf{X}^T(\mbf{W}-\mbf{I})\mbf{X}/n$, i.e., of $\frac{1}{n}\sum_{i=1}^n x_{si}x_{it}(W_{i}-1)$.  
To do so, we have 
\begin{eqnarray}
\text{Var}\left(\frac{\sum_{i=1}^n x_{si}x_{it}(W_{i}-1)}{n}\right)
 &=&\nonumber
 \frac{1}{n^2} \text{Var}\left(\sum_{i=1}^n x_{si}x_{it}(W_{i}-1)\right)\\
 &=&\nonumber\frac{1}{n^2} \left(\sum_{i=1}^n (x_{si}x_{it})^2 \frac{1-\pi_i}{r\pi_i}-2\sum_{i<j}x_{si}x_{it}x_{sj}x_{tj}\frac{1}{r} \right)\\
 &=&\nonumber\frac{1}{rn^2} \left[\sum_{i=1}^n (x_{si}x_{it})^2 \frac{1-\pi_i}{\pi_i}-\left(\left(\sum_{i=1}^n x_{si}x_{it}\right)^2-\sum_{i=1}^n (x_{si}x_{it})^2\right) \right]\\
 &=&\nonumber\frac{1}{r} \left[\sum_{i=1}^n \frac{(x_{si}x_{it})^2}{n^2\pi_i}-\left(\sum_{i=1}^n \frac{x_{si}x_{it}}{n}\right)^2 \right]\\
 &=&\label{eqn:delta:beta} O\left(\frac{1}{rn^2}\sum_{i=1}^n \frac{1}{\pi_i}\right). 
\end{eqnarray}
Combining the facts that $n^2\le \sum_{i=1}^n \frac{1}{\pi_i}\le \frac{n}{\pi_{min}}$ 
and $0<(2-\gamma_0-\alpha) \le\delta$ in Eqn.~(\ref{eqn:delta:lev}), we thus verify that the assumption in Lemma~\ref{lem:1} holds. 

Subtracting $\sbf\beta_0$ from both sides of Eqn.~(\ref{order}) in Lemma~\ref{lem:1}, we get 
\begin{equation}\label{order:thm:beta}
 \tilde{\sbf\beta}-\sbf\beta_0 =(\mbf{X}^T\mbf{X})^{-1}\mbf{X}^T\mbf{W}\sbf e+ \hat{\sbf\beta}_{OLS}-\sbf\beta_0+O_p\left(\frac{1}{r^{\delta}}\right), 
\end{equation}
where $\sbf e=\mbf Y- \mbf{X}\hat{\sbf\beta}_{OLS}$. 
Since $\vv(\hat{\sbf\beta}_{OLS}-\sbf\beta_0) = O\left(\frac{1}{n}\right)$, 
we have $\hat{\sbf\beta}_{OLS}-\sbf\beta_0 = O_p\left(\frac{1}{n^{2-\alpha-\delta}}\right)$. 
Thus, both $\hat{\sbf\beta}_{OLS}-\sbf\beta_0$ and the residual term in the right hand side of (\ref{order:thm:beta}) are negligible. 
Hence, the asymptotic distribution of $\tilde{\sbf\beta}-\sbf\beta_0$ is equivalent to that of $(\mbf{X}^T\mbf{X})^{-1}\mbf{X}^T\mbf{W}\sbf{e}$. 

Thus, for the rest of the proof, we derive the asymptotic normality of $(\mbf{X}^T\mbf{X})^{-1}\mbf{X}^T\mbf{W}\sbf{e}$. 
Note~that  
\begin{equation}\label{e-epsilon-transform}
  (\mbf{X}^T\mbf{X})^{-1}\mbf{X}^T\mbf{W}\sbf e  =  (\mbf{X}^T\mbf{X})^{-1}\mbf{X}^T\mbf{W}\sbf\varepsilon + (\mbf{X}^T\mbf{X})^{-1}\mbf{X}^T\mbf{W}(\sbf e-\sbf\varepsilon),
\end{equation}
where $\sbf\varepsilon$ is the random noise in Model (\ref{linreg-matrix}). 
We will show that the order of $(\mbf{X}^T\mbf{X})^{-1}\mbf{X}^T\mbf{W}(\sbf e-\sbf\varepsilon)$ is bounded by calculating the variances of $s^{th}$ element of $\mbf{X}^T\mbf{W}(\sbf e-\sbf\varepsilon)/n$.  
We have 
\begin{multline}
 \text{Var}\left(\frac{\sum_{i=1}^n x_{si}W_{i}(e_i-\varepsilon_i)}{n}\right)\\
 = \frac{1}{n^2} \left(\sum_{i=1}^nx_{si}^2\vv(W_{i}(e_i-\varepsilon_i))+2\sum_{i<j}x_{si}x_{sj}\text{Cov}\left[W_{i}(e_i-\varepsilon_i), W_{j}(e_j-\varepsilon_j)\right] \right)\label{e-epsilon-order}.
\end{multline}

\noindent
Now, we analyze the two terms on the right hand side of Eqn.~(\ref{e-epsilon-order}).
For the first term, we have
\begin{eqnarray}
 \sum_{i=1}^n \vv(W_{i}(e_i-\varepsilon_i))&= &\nonumber\sum_{i=1}^n \E(W_{i}^2(e_i-\varepsilon_i)^2)=\sum_{i=1}^n  \vv(W_{i})\vv(e_i-\varepsilon_i)+(\E W_{i})^2\vv(e_i-\varepsilon_i)\\
 &= &\label{e_epsilon_order_1}\sum_{i=1}^n \frac{1-\pi_i}{r\pi_i}h_{ii}\sigma^2+h_{ii}\sigma^2  = O_p\left( \frac{1}{r\pi_{min}} \right), 
\end{eqnarray}
where the last equality holds since  $\sum_{i=1}^n h_{ii} =p$. 
For the second term, we have
\begin{eqnarray}
\sum_{i<j} \text{Cov}\left(W_{i}(e_i-\varepsilon_i), W_{j}(e_j-\varepsilon_j)\right)&= &\nonumber\sum_{i<j} \E(W_{i}W_{j}(e_i-\varepsilon_i)(e_j-\varepsilon_j))\\
 &= &\label{e_epsilon_order_2}\sum_{i<j}\E(W_{i}W_{j})\E((e_i-\varepsilon_i)(e_j-\varepsilon_j))= O_p\left(\frac{n}{r}\right). 
\end{eqnarray}
Substituting (\ref{e_epsilon_order_1}) and (\ref{e_epsilon_order_2}) into (\ref{e-epsilon-order}), 
we have that
\begin{equation}
\label{e-epsilon-order-final}
\text{Var}\left(\frac{\sum_{i=1}^n x_{si}W_{i}(e_i-\varepsilon_i)}{n}\right)= O_p\left(\frac{1}{n^2r\pi_{min}}\right).
\end{equation}
Combining (\ref{e-epsilon-transform}) and (\ref{e-epsilon-order-final}), 
we aim to show that $(\mbf{X}^T\mbf{X})^{-1}\mbf{X}^T\mbf{W}(\sbf e-\sbf\epsilon)$ is of higher order than $(\mbf{X}^T\mbf{X})^{-1}\mbf{X}^T\mbf{W}\sbf \varepsilon $. 
Thus, if we establish the asymptotic normality of $(\mbf{X}^T\mbf{X})^{-1}\mbf{X}^T\mbf{W}\sbf\varepsilon$, then the asymptotic normality of $ \tilde{\sbf\beta}-\sbf\beta_0$ in Eqn.~(\ref{order:thm:beta}) will follow directly. 

Note that $\mbf{W}$ can be written as $\mbf{W}=\mbf{\Omega} \mbf{K}$. 
By Lemma~\ref{lem:mult:pois}, it follows that $(K_1,\ldots, K_n)$ and $[(U_1, \ldots, U_n)|\sum_{i=1}^nU_i=r]$ are identically distributed.
Hence, 
$$ 
(\mbf{X}^T\mbf{X})^{-1}\mbf{X}^T\mbf{W}\sbf\varepsilon
\quad\mbox{and}\quad
(\mbf{X}^T\mbf{X})^{-1}\mbf{X}^T\mbf{\Omega} \mbf{U}\sbf\varepsilon|\sum_{i=1}^n U_i 
$$ 
are identically distributed. 
Thus, Lemma~\ref{lem:cond:Us} can be applied, and the asymptotic normality is obtained using the Cramer-Wold device.

Finally, combining Eqn.~(\ref{order:thm:beta}), 
Lemma~\ref{lem:Us}, and Lemma~\ref{lem:cond:Us}, we have that
\begin{eqnarray}
  \mbf{\Sigma}_0^{-\frac 12}(\sbf{\tilde\beta}-\sbf\beta_0)&\stackrel{d}{\rightarrow}&\label{eqn:thm:beta_tilde-beta_app}\textbf{N}(\sbf 0,\mbf{I}_p), \quad\text{as } n\to\infty, 
\end{eqnarray}
where $\mbf{\Sigma}_0=\sigma^2(\mbf{X}^T\mbf{X})^{-1}\mbf{X}^T(\mbf{I}_p+\mbf{\Omega})\mbf{X}(\mbf{X}^T\mbf{X})^{-1}$.

\subsection{Proof of Proposition \ref{thm:beta:pi1}}
\label{sxn:proof_of_prop1}

By Theorem~\ref{thm:beta_tilde-beta}, 
the asymptotic squared bias for $\tilde{\sbf\beta}$ is $0$.  
By the definition of AMSE in Eqn.~(\ref{eqn:mse20}), 
$AMSE(\tilde{\sbf\beta}; {\sbf\beta}_{0})=tr(Avar(\tilde{\sbf\beta}))$, i.e., the expression given in Eqn.~(\ref{eqn:mse2}). 
We consider minimizing $AMSE(\tilde{\sbf\beta}; {\sbf\beta}_{0})$ as a function of $\{\pi_i\}_{i=1}^n$.
It is straightforward to employ the method of Lagrange multipliers to find the minimizer of the right-hand side of Eqn.~(\ref{eqn:mse2}), subject to the constraint $\sum_{i=1}^n\pi_i=1$.
If we do this, then we let 
$$
L(\pi_1, \ldots, \pi_n)=tr(Avar(\tilde{\sbf\beta}))+\lambda(\sum_{i=1}^n\pi_i-1). 
$$
Then, we can solve $\partial{L/\partial \pi_i}=0$, $i=1,\ldots,n$, for the optimal sampling probabilities.

The proofs of Propositions \ref{thm:beta:pi3}--\ref{thm:beta_hat:pi2} all follow in a manner similar to that of Proposition \ref{thm:beta:pi1}, and thus they will be omitted.

\subsection{Proof of Theorem~\ref{coro:diverg:p}}
\label{sec:proof-of-corollaryThm}

The proof of Theorem~\ref{coro:diverg:p}, in which we allow the number of predictors $p$ to diverge, is readily derived from that of Theorem~\ref{thm:beta_tilde-beta}. 
Without loss of generality, we let $\sbf a\in\mathbb{R}^p$ such that $\|\sbf a\|^2=\sbf a^T\sbf a=1$.
By combining Eqns.~(\ref{order:thm:beta}) and~(\ref{e-epsilon-transform}), it follows that
\begin{eqnarray}
 \sbf a^T(\tilde{\sbf\beta}-\sbf\beta_0)  = \nonumber\sbf a^T(\mbf{X}^T\mbf{X})^{-1}\mbf{X}^T\mbf{W}\sbf \varepsilon+\sbf a^T(\mbf{X}^T\mbf{X})^{-1}\mbf{X}^T\mbf{W}\sbf(\sbf e- \sbf\varepsilon)+ \sbf a^T(\hat{\sbf\beta}_{OLS}-\sbf\beta_0)+O_p\left(\frac{1}{r^{\delta}}\right)
\end{eqnarray}

By results in \citep{huber1973robust,yohai1979asymptotic,portnoy1984asymptoticI,portnoy1985asymptoticII}, we note that $\|\sbf a\|^2=1$, and $\sbf a^T(\hat{\sbf\beta}_{OLS}-\sbf\beta_0)=O_p(1/\sqrt{n})$, which is of the highest order. 
Further, by a similar argument in Theorem 1 (from (\ref{e-epsilon-transform}) to (\ref{e-epsilon-order-final})) we have that $\sbf a^T(\mbf{X}^T\mbf{X})^{-1}\mbf{X}^T\mbf{W}\sbf(\sbf e- \epsilon)$ is of higher order than $\sbf a^T(\mbf{X}^T\mbf{X})^{-1}\mbf{X}^T\mbf{W}\sbf \varepsilon$. 
To prove Theorem~\ref{coro:diverg:p}, it suffices to establish the asymptotic normality of $\sbf a^T(\mbf{X}^T\mbf{X})^{-1}\mbf{X}^T\mbf{W}\sbf \varepsilon$. 
This follows from applying Condition (B2) to Lemma~\ref{lem:Us} and  by noting that $M_x=O(p)$ in (\ref{eqn:useMx}).

\subsection{Proof of Theorem~\ref{thm:beta_tilde-beta_hat}}
\label{sec:proof:thm:beta_tilde-betahat}

Given data $\{\mbf{X},\mbf{Y}\}$, we first determine the value of $\delta$ in Eqn.~(\ref{eqn:delta:lev}) in order to use Lemma \ref{lem:1}. 
Since $\|\mbf x_i\|<\infty$, where $\mbf x_i$ is the $i^{th}$ row of $\mbf{X}$, each element of $\mbf{X}^T\mbf{X}$ is a fixed matrix and is finite in norm. 
Since the $(s,t)^{th}$ element of $\mbf{X}^T(\mbf{W}-\mbf{I})\mbf{X}$ is $\sum_{i=1}^n x_{is}x_{it}(W_{i}-1)$, it follows that 
\begin{eqnarray}
 \text{Var}(\sum_{i=1}^n x_{is}x_{it}(W_{i}-1))
  = \label{eqn:delta:betahat} \frac{1}{r}\left(\sum_{i=1}^n (x_{is}x_{it})^2 \frac{1-\pi_i}{\pi_i}-2\sum_{i<j}x_{is}x_{it}x_{js}x_{tj} \right)
  = O_p\left(\frac{1}{r}\right), 
\end{eqnarray}
i.e., $\delta=1$ in Eqn.~(\ref{eqn:delta:lev}). 

Next, note that $\mbf{K}$ can be written as $\mbf{K}=\sum_{j=1}^r \mbf{K}^{(j)}$, where $\mbf{K}^{(j)}=\text{Diag}\{K_i^{(j)}\}_{i=1}^n$,  
and where
$(K_1^{(j)}, \ldots, K_n^{(j)})\stackrel{iid}{\sim}$ Mult$(1, \{\pi_i\})_{i=1}^n$, for $j=1,\ldots, n$.
Combining Eqn.~(\ref{order}) in Lemma~\ref{lem:1} and Eqn.~(\ref{eqn:delta:betahat}), we can show that 
\begin{eqnarray}
  \tilde{\sbf{\beta}}-\hat{\sbf{\beta}}_{OLS} &=&\nonumber (\mbf{X}^T\mbf{X})^{-1}\mbf{X}^T\mbf{W}\sbf e+O_p(1/r)\\
  &=&\nonumber (\mbf{X}^T\mbf{X})^{-1}\sum_{j=1}^r \mbf{X}^T\mbf{\Omega} \mbf{K}^{(j)}\sbf e+O_p(1/r)  .  
\end{eqnarray}
Given this, 
we can use the Cramer-Wold device to establish the asymptotic normality of 
$$ 
(\mbf{X}^T\mbf{X})^{-1}\sum_{j=1}^r \mbf{X}^T\mbf{\Omega} \mbf{K}^{(j)}\sbf e.
$$ 
To do so, for any constant vector $\sbf b\in \mathbb{R}^p$ such that $\sbf b\ne \sbf 0,$ we consider $\sum_{j=1}^r \sbf b^T(\mbf{X}^T\mbf{X})^{-1}\mbf{X}^T\mbf{\Omega} \mbf{K}^{(j)}\sbf e$.
This is a summation of $r$ independent random variables. 
Since the elements in $\sbf X$ and $\sbf e$ are fixed numbers, finite in norm, and $\pi_i>0$, the Noether condition in Hajek-Sidek CLT is satisfied. 
 
Without loss of generality, we have
\begin{eqnarray}
   \vv(\sbf b^T(\mbf{X}^T\mbf{X})^{-1}\mbf{X}^T\mbf{\Omega} \mbf{K}^{(1)}\sbf e)&=&\nonumber 
   \vv(\sum_{i=1}^n \sbf b^T(\mbf{X}^T\mbf{X})^{-1}\mbf x_i\frac{1}{r\pi_i} K_i^{(1)}e_i)\\
   &=&\nonumber 
   \sum_{i=1}^n( \sbf a^T\mbf x_ie_i\frac{1-\pi_i}{r\pi_i}e_i\mbf x_i^T\sbf a)-2\sum_{i\le j}\sbf a^T\mbf x_i e_i\frac{1}{r}e_j \mbf x_j^T \sbf a\\
   &=&\nonumber \frac{1}{r}\sbf a^T\left(\sum_{i=1}^n\frac{e_i^2}{\pi_i}\mbf x_i\mbf x_i^T\right)\sbf a-
   \frac{1}{r}\sbf a^T\left(\sum_{i=1}^n\mbf x_ie_i^2\mbf x_i^T+2\sum_{i<j}\mbf x_ie_ie_j\mbf x_j^T\right)\sbf a\\
   &=&\nonumber \frac{1}{r}\sbf a^T\left(\sum_{i=1}^n\frac{e_i^2}{\pi_i}\mbf x_i\mbf x_i^T\right)\sbf a-
   \frac{1}{r}\sbf a^T \mbf{X}^T\sbf e\sbf e^T \mbf{X}\sbf a\\
    &=&\label{eqn:xe_0} \frac{1}{r}\sbf a^T\left(\sum_{i=1}^n\frac{e_i^2}{\pi_i}\mbf x_i\mbf x_i^T\right)\sbf a,
\end{eqnarray}
where $\sbf a =(\mbf{X}^T\mbf{X})^{-1}\sbf b$, and where Eqn.~(\ref{eqn:xe_0}) follows since $\mbf{X}^T\sbf e=\sbf 0$.  
By the Lindeberg-L$\Acute{e}$vy CLT, we have that
$$
\sbf b^T(\mbf{X}^T\mbf{X})^{-1}\sum_{j=1}^r \mbf{X}^T\mbf{\Omega} \mbf{K}^{(j)}\sbf e\stackrel{d}{\rightarrow}N(\sbf 0,\sbf b^T \mbf{\Sigma}_{c}\sbf b)  ,  
$$
where 
$\mbf{\Sigma}_{c}=(\mbf{X}^T\mbf{X})^{-1}\mbf{\Sigma}_e(\mbf{X}^T\mbf{X})^{-1}$ 
and 
$\mbf{\Sigma}_e=\frac{1}{r}\sum_{i=1}^n \frac{e_i^2}{\pi_i}\mbf{x}_i\mbf{x}_i^T$.
Thus, by the Cramer-Wold device, Theorem~\ref{thm:beta_tilde-beta_hat} follows.

\end{document}